\documentclass{amsart}
\usepackage{mathabx}
\usepackage{graphicx}
\usepackage{amsfonts}
\usepackage{amsmath}
\usepackage{amscd}
\usepackage{amssymb}
\usepackage{verbatim}
\usepackage{hyperref}
\usepackage{marginnote}
\usepackage{tikz}
\usepackage{cite}
\usepackage{hyperref}
\usepackage{cleveref}
\usepackage{mathdots}

\newenvironment{proof*}{\noindent\emph{Proof}}{$\square$\smallskip}

\newtheorem{theorem}{Theorem}[section]
\newtheorem{problem}[theorem]{Problem}
\newtheorem{Definition}[theorem]{Definition}
\newtheorem{lemma}[theorem]{Lemma}
\newtheorem{Example}[theorem]{Example}

\newtheorem{corollary}[theorem]{Corollary}
\newtheorem{Remark}[theorem]{Remark}
\newtheorem{proposition}[theorem]{Proposition}

\newtheorem{Exercise}[theorem]{Exercise}
\newtheorem{Exercises}[theorem]{Exercises}
\newtheorem{Notation}[theorem]{Notation}
\newtheorem{Convention}[theorem]{Convention}
\newtheorem{standing assumption}[theorem]{Standing Assumption}

\newenvironment{definition}{\begin{Definition}\normalfont}{\end{Definition}}
\newenvironment{example}{\begin{Example}\normalfont}{\end{Example}}
\newenvironment{remark}{\begin{Remark}\normalfont}{\end{Remark}}

\newenvironment{convention}{\begin{Convention}\normalfont}{\end{Convention}}

\title[Diagram groups and groups of PL homeomorphisms  with fixed points]{Diagram groups and groups of piecewise linear homeomorphisms of the line with global fixed points}  
\author[D.~S.~Farley]{Daniel S. Farley}
\address{Department of Mathematics\\ Miami University\\ Oxford, OH 45056 U.S.A.}
\email{farleyds@miamioh.edu}

\date{\today}

\begin{document}
\begin{abstract}     
Assume $n \geq 2$ and $\ell = (r_{1}, \ldots, r_{k}) \in [0,1]^{k}$ is an increasing sequence of real numbers. Let $G_{n,\ell}$ denote the 
group of orientation-preserving piecewise linear homeomorphisms $h$
of $I = [r_{1}, r_{k}]$ such that: (i) $h'(x)$ is a power of $n$ where it is defined; (ii) if $h'(x)$ is undefined, then $x$ is an $n$-adic rational number, (iii) $h$ fixes each entry of $\ell$, and (iv) $h(\mathbb{Z}[1/n] \cap I) = 
\mathbb{Z}[1/n] \cap I$. 

We prove that $G_{n,\ell}$ is a diagram group $D(\mathcal{P}_{n,\ell}, \omega_{n,\ell})$ for all integers 
$n \geq 2$ and for all finite sequences $\ell$. The semigroup presentation
$\mathcal{P}_{n,\ell}$ and the word $\omega_{n,\ell}$ can be computed from the $n$-ary expansions of the numbers $r_{i}$. If all entries in $\ell$ are rational, then $G_{n,\ell}$ has type $F_{\infty}$. Otherwise, $G_{n,\ell}$ is not finitely generated. 
\end{abstract}

\subjclass[2010]{Primary 20F65; Secondary 20F67}

\keywords{Thompson groups, finiteness properties of groups, diagram groups} 

\maketitle


\section{Introduction}
In \cite{BS}, Bieri and Strebel studied groups of piecewise linear homeomorphisms of the line, denoted by $G(I;A,P)$. These groups are defined by three parameters: (i) a closed interval $I \subseteq \mathbb{R}$; (ii) a multiplication subgroup 
$P$ of the positive real numbers, and (iii) a $\mathbb{Z}[P]$-submodule $A$ of $\mathbb{R}$. The group $G(I;A,P)$ is the collection of all piecewise linear homeomorphisms $h$ of $I$ such that:
\begin{enumerate}
\item $h(I \cap A) =I \cap A$;
\item the derivative $h'(x)$, where defined, is always in $P$, and
\item the points $x$ where $h'(x)$ is undefined lie in $A$.
\end{enumerate} 
For instance, these definitions imply that
the generalized Thompson group $F_{n}$ ($n \geq 2$) is $G([0,1]; \mathbb{Z}[1/n], \langle n \rangle)$. 

One somewhat unusual feature of \cite{BS} is that the authors consider the role of the interval $I$ in great depth. Following their example, this author began to consider the groups $G([a,b]; \mathbb{Z}[1/n], \langle n \rangle)$ for arbitrary $a$ and $b$, with the idea of studying their topological finiteness properties $F_{n}$. (Here we recall that \cite{BG} and \cite{B} proved, respectively, that the Thompson groups $F_{2}$ and $F_{n}$ have type $F_{\infty}$.)
After beginning this project, the author learned of \cite{GolanSapir} and \cite{Savchuk}. In \cite{GolanSapir}, Golan and Sapir showed that the stabilizer in $F_{2}$ of a sequence
$0 < r_{1} < r_{2} < r_{3} < \ldots < r_{m} < 1$ of rational numbers is always finitely generated and that 
the isomorphism types of these stabilizers depend only on $m$ and the 
set $\{ i \mid r_{i} \text{ is a dyadic rational } \}$. In \cite{Savchuk}, it was shown 
that the stabilizer in $F_{2}$ of a single rational number is always finitely presented, and the authors explicitly computed a finite presentation.

In view of the results from \cite{BS}, \cite{GolanSapir} and \cite{Savchuk}, it is natural to consider the following class of groups.
\begin{definition} \label{definition:ourgroups}
(The groups $G_{n,\ell}$) 
Let $n \geq 2$ and let $\ell = (r_{1}, \ldots, r_{k}) \in [0,1]^{k}$
be an increasing sequence. We let $G_{n,\ell}$ denote the subgroup 
of $G([r_{1},r_{k}]; \mathbb{Z}[1/n], \langle n \rangle)$ that fixes $\ell$.
\end{definition}
We will prove the following theorem.
\begin{theorem} \label{theorem:bigone}
The group $G_{n,\ell}$ is isomorphic to a diagram group $\mathrm{D}(\mathcal{P}_{n,\ell},\omega_{n,\ell})$, where the semigroup presentation $\mathcal{P}_{n,\ell}$ and base word $\omega_{n,\ell}$ can be computed algorithmically from the $n$-ary expansions of the members of $\ell$.

If $\ell \in \mathbb{Q}^{k}$, then $\mathcal{P}_{n,\ell}$ is a finite semigroup presentation 
and $G_{n,\ell}$ is of type $F_{\infty}$. If any member of $\ell$ is irrational, then $\mathcal{P}_{n,\ell}$ is infinite and $G_{n,\ell}$ is
not finitely generated.
\end{theorem}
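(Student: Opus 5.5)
The plan is to realize $G_{n,\ell}$ as a diagram group by encoding $n$-adic subdivisions of the pieces of $[r_1,r_k]$ cut out by the fixed points.

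\textbf{Step 1: an ``interval type'' for each point.} For each real $r$, look at the $n$-ary expansion and form the tail sequence $r^{(j)} = \{n^j r\}$, the fractional parts. Two half-open germs at points $r,s$ are related by an element of $G(\,\cdot\,;\mathbb{Z}[1/n],\langle n\rangle)$ exactly when the tails eventually agree after rescaling. So I would define alphabet letters as follows.
\begin{enumerate}
\item A letter $x$ stands for a standard $n$-adic interval $[a/n^m,(a+1)/n^m]$, with relation $x = x^n$ (the $F_n$ presentation).
\item For each $r_i$ that is not $n$-adic, and each tail index $j$, add letters $L_{i,j}$ and $R_{i,j}$. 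These stand for $[c, r_i]$ and $[r_i, d]$, where $c,d$ are the $n$-adic points of level $j$ adjacent to $r_i$.
\item Add relations expressing a level-$j$ germ as a word of level $j+1$. If $d_{j+1}$ is the $(j+1)$st digit of $r_i$, the relation is $L_{i,j} = x^{d_{j+1}} L_{i,j+1}$, and dually for $R_{i,j}$.
\end{enumerate}
When $r_i$ is rational, the digit sequence is eventually periodic. I would identify letters with equal tails, i.e.\ take letters indexed by the finitely many distinct values of $\{n^j r_i\}$. This makes $\mathcal{P}_{n,\ell}$ finite. When some $r_i$ is irrational, the tails are pairwise distinct and infinitely many letters are needed. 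The base word $\omega_{n,\ell}$ is the concatenation, over consecutive pairs $r_i<r_{i+1}$, of a word describing a subdivision of $[r_i,r_{i+1}]$ into such pieces. That subdivision is read off from the $n$-ary expansions, which gives computability.

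\textbf{Step 2: the isomorphism.} Each reduced diagram over $(\mathcal{P}_{n,\ell},\omega_{n,\ell})$ gives a pair of subdivisions of $I$ into pieces of the above types, with matching labels on top and bottom. Map it to the homeomorphism that is affine on matching pieces. Same labels means same length up to a power of $n$, and the map sends $n$-adics to $n$-adics. It fixes each $r_i$, because no relation crosses the boundary point $r_i$: the letters $L_{i,\cdot}$ always remain to the left of $r_i$.

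The converse direction is that every $h\in G_{n,\ell}$ arises in this way. Near $r_i$, $h$ is linear on $[r_i-\epsilon,r_i]$ with slope $n^p$ and fixes $r_i$. Hence $n^p$ maps the tail structure at level $j$ to level $j-p$, which forces $p=0$ unless the tails recur (rational case), and this is compatible with the letter identifications. Away from the $r_i$, the standard $F_n$ argument about common $n$-adic subdivisions applies. Finally I would check that composition corresponds to concatenation followed by reduction, and that the kernel is trivial since reduced diagrams equal to the identity map are trivial. This uses the standard dipole-reduction argument.

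\textbf{Step 3: finiteness.} For rational $\ell$, the presentation is finite, and every relation has the form ``letter $=$ word'' with the left side a single letter. I would invoke Farley's theorem that diagram groups over finite presentations whose relations are of this tree-like type (as in the proof that $F$ and its relatives are $F_\infty$) act on CAT(0) cube complexes with the needed Brown-criterion filtration. Alternatively, one can argue directly via a Stein–Farley complex, showing that descending links become highly connected, as for $F_n$. For irrational $r_i$, non-finite-generation follows from the germ homomorphism. Any finite set of elements of $G_{n,\ell}$ is the identity on some $[r_i,r_i+\epsilon)$ or has bounded subdivision depth near $r_i$. I would show that there are elements supported arbitrarily close to $r_i$ whose subdivisions need letters $R_{i,j}$ with $j$ arbitrarily large, and that these are not generated by elements involving only finitely many letters. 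The reason is that relations never lower the index $j$ except through letters already present.

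The main obstacle I expect is the surjectivity and faithfulness in Step 2 near the points $r_i$, and especially the exact identification of letters in the rational case. Recurrent tails must be identified so that a germ slope $n^{p}$ with $p\neq 0$ is allowed exactly when the periodic structure permits it.
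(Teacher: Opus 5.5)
The gap is in Step~1: your alphabet mis-encodes the interior points $r_i$ ($1<i<k$), in opposite ways for non-$n$-adic and for $n$-adic $r_i$. As a result, the map in Step~2 does not land in $G_{n,\ell}$.

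\textbf{Non-$n$-adic interior points.} Here you cut the standard interval containing $r_i$ into $L_{i,j}=[c,r_i]$ and $R_{i,j}=[r_i,d]$. No relation involves both sides, so in every word of $[\omega_{n,\ell}]$ the $L_i$-letter and the $R_i$-letter stay adjacent, and every relation acts on one side of that junction. Your diagram group is therefore a direct product across $r_i$, and the two one-sided germs at $r_i$ are rescaled independently. But breakpoints of elements of $G_{n,\ell}$ lie in $\mathbb{Z}[1/n]$. So every $h\in G_{n,\ell}$ is linear across a non-$n$-adic $r_i$: its left and right slopes there coincide. You only examine $[r_i-\epsilon,r_i]$ and never impose this.

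Concretely, take $n=2$ and $\ell=(0,\tfrac13,1)$. Write $L_{[0]},L_{[1]}$ for your left letters at $\tfrac13=.\overline{01}$, so the relations are $L_{[0]}=L_{[1]}$ and $L_{[1]}=xL_{[0]}$. Expand $L_{[0]}=[0,\tfrac13]$ to $xxL_{[0]}$ in two ways, leaving $R_{[0]}=[\tfrac13,1]$ untouched:
\begin{itemize}
\item unfold the period twice, giving pieces $[0,\tfrac14],[\tfrac14,\tfrac5{16}],[\tfrac5{16},\tfrac13]$;
\item unfold it once and then apply $x\to x^2$, giving pieces $[0,\tfrac18],[\tfrac18,\tfrac14],[\tfrac14,\tfrac13]$.
\end{itemize}
The resulting element of your diagram group goes to a homeomorphism with slope $4^{\pm1}$ just left of $\tfrac13$ and slope $1$ just right of it. This is not in $G_{2,\ell}$. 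Compare the map $\theta$ in Section~\ref{section:alternate}: $G_{2,\ell}$ is its kernel, a proper subgroup of the product.

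\textbf{$n$-adic interior points.} Here you have the opposite problem. You add no letters, so both neighbours of $r_i$ are labelled $x$, and $x^n\to x$ merges pieces across $r_i$. For $n=2$ and $\ell=(0,\tfrac12,1)$, your presentation is $\langle x\mid x=x^2\rangle$ with base word $x^2$. Its diagram group is $F$, whereas $G_{2,\ell}\cong F\times F$.

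\textbf{The missing idea} is the paper's choice of letters: the states of the automaton $A_n(r_i)$. For $1<i<k$, a state labels the \emph{whole} standard interval $I_\omega$ containing $r_i$, with relation $X\to x^{d}Yx^{n-1-d}$. Two pieces with the same label are then matched by a single standard transformation $\sigma_{\omega_1}^{\omega_2}$. This map is automatically linear across $r_i$, and it fixes $r_i$ exactly when the accepting states agree (Lemma~\ref{lemma:stab}). For $r_i=.\alpha\overline{0}$, the loop state is a letter $Y\neq x$ with $Y\to Yx^{n-1}$, labelling intervals whose left endpoint is $r_i$; this is what blocks merging across $r_i$. One-sided pieces occur only at $r_1$ and $r_k$.

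With the alphabet corrected, your direct diagram-to-homeomorphism dictionary would be a legitimate alternative to the paper's route. The paper instead uses the free action on the CAT(0) cube complex $\Delta(n,\ell)$ and identifies the quotient $2$-skeleton with the Squier complex.

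Two smaller points:
\begin{itemize}
\item Step~3 does not follow from the shape of the relations alone. A Brown-criterion template such as Theorem~\ref{theorem:template} needs its hypotheses checked, notably richness in contractions. The paper verifies this by showing that every consecutively ordered vertex with at least $kn-n+2$ pieces has a label containing $x^n$.
\item For irrational $r_i$, the clean argument is this: a map $t\mapsto n^pt+b$ with $b\in\mathbb{Z}[1/n]$ that fixes an irrational point must be the identity. So every element of $G_{n,\ell}$ is the identity near $r_i$, and finitely many elements cannot move points arbitrarily close to $r_i$.
\end{itemize}
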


The $F_{\infty}$ statement extends some of the above-mentioned results from \cite{GolanSapir} and \cite{Savchuk} to arbitrary $n$, since $F_{\infty}$ groups are finitely presentable. The restriction of entries of $\ell$ to $[0,1]$ is done without loss of generality, since, if $\ell \in \mathbb{R}^{k}$, then there is an affine transformation of $\mathbb{R}$ conjugating $G_{n,\ell}$ to $G_{n,\ell'}$
for some $\ell' \in [0,1]^{k}$.

The theory of diagram groups was developed initially by Kilibarda in her thesis \cite{Kilibarda}, and later by Guba and Sapir \cite{GS}. A major impetus for the development of this theory was Guba's observation \cite{GS} that Thompson's group $F_{2}$ is a diagram group. Our main theorem is a generalization of Guba's observation, and raises the possibility of applying diagram group methods to the study of the groups $G_{n,\ell}$. We do not attempt to summarise such methods here, but refer the interested reader to \cite{GS} and the recent survey \cite{Genevois}, which features an extensive bibliography. We can mention the following corollary.
\begin{corollary} \label{corollary:mycorollary}
For arbitrary $n$ and $\ell$:
\begin{enumerate}
\item The integral homology groups $H_{k}(G_{n,\ell})$ are free abelian, and
\item The group $G_{n,\ell}$ acts properly by isometries on a locally finite
CAT(0) cubical complex. 
\end{enumerate}
\end{corollary}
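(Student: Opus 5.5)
The corollary follows from Theorem~\ref{theorem:bigone} together with two general facts about diagram groups, both due to Guba and Sapir and Farley. First I use Theorem~\ref{theorem:bigone} to identify $G_{n,\ell}$ with the diagram group $\mathrm{D}(\mathcal{P}_{n,\ell},\omega_{n,\ell})$. Neither conclusion requires the presentation $\mathcal{P}_{n,\ell}$ to be finite, so no case split between rational and irrational $\ell$ is needed.

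For part (1), I invoke the theorem of Guba and Sapir \cite{GS} that every diagram group $\mathrm{D}(\mathcal{P},w)$ over an arbitrary semigroup presentation has free abelian integral homology in every degree. The argument there goes as follows. One forms the Squier complex $K(\mathcal{P})$ and its cubical refinement: vertices are words, cells are ``independent'' collections of relation applications. The component of $w$ in this complex is an Eilenberg--MacLane space for $\mathrm{D}(\mathcal{P},w)$. A discrete Morse theory or collapsing argument then shows that its cellular chain complex splits, so the homology is free abelian. Here I only need to check that the cited theorem applies to infinite presentations. It does, because the argument is local and cell-by-cell and $H_k$ commutes with direct limits of subcomplexes. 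Since a direct limit of free abelian groups need not be free, I would cite the direct form of the theorem rather than pass through a limit.

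For part (2), I use Farley's theorem that every diagram group acts freely, hence properly, by isometries on a CAT(0) cubical complex. The complex is built as follows. Its vertices are equivalence classes of diagrams with top label $w$ (the ``infinite-tail'' construction). Cubes come from attaching families of pairwise independent cells to the bottom boundary, and Gromov's link condition shows the complex is CAT(0). The main obstacle is local finiteness. A vertex is a diagram whose bottom boundary is a finite word, and its neighbours correspond to single relation applications to that word. Local finiteness therefore holds when each word admits only finitely many applicable relations. For rational $\ell$ this is automatic because $\mathcal{P}_{n,\ell}$ is finite. For irrational $\ell$ I would need to check, from the explicit form of $\mathcal{P}_{n,\ell}$ constructed in the proof of Theorem~\ref{theorem:bigone}, that each letter occurs as a side of only finitely many relations. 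I expect this to hold because the relations encode $n$-ary subdivisions of intervals with endpoints at fixed digits of the $r_i$, so each letter has a unique ``split'' relation and a unique ``merge'' partner. If this check failed, I would pass to the locally finite version of Farley's complex in which only expansions are allowed.
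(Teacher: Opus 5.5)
Your route is the paper's. You identify $G_{n,\ell}$ with $D(\mathcal{P}_{n,\ell},\omega_{n,\ell})$ via Theorem~\ref{theorem:bigone}, then quote the general facts about diagram groups: free abelian homology for (1) (the paper cites \cite{GS2} and \cite{Genevois} rather than \cite{GS}), and Farley's CAT(0) cubical action \cite{Fthesis} for (2). Part (1) is fine. What you leave unfinished is local finiteness when some $r_i$ is irrational, and the criterion you propose to check there is not the right one. The paper itself covers this point only through the citation.

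\emph{Why your criterion fails.} Every relation of $\mathcal{P}_{n,\ell}$ has an $x$ on its right-hand side. So when $\mathcal{P}_{n,\ell}$ is infinite, ``each letter occurs in only finitely many relations'' is false for $x$. Read instead as ``each letter is a side of only finitely many relations'', it is true but useless, because every right-hand side has length at least $2$. Your ``unique merge partner'' heuristic is also false: in Section~\ref{section:example}, $G$ lies in the right-hand sides of both $F\to xxG$ and $G\to xG$.

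\emph{The correct condition.} What you need is that each \emph{word} is a side of only finitely many relations, and this does hold.
\begin{itemize}
\item Each letter is the left side of exactly one relation.
\item $x^{n}$ is a right-hand side only of $x\to x^{n}$.
\item Every other right-hand side contains exactly one letter $Y\neq x$. That letter appears only in relations coming from edges that enter the equivalence class of $Y$ from a reduced vertex.
\end{itemize}
These classes are finite. The chosen expansion of $r_1$ is never eventually $n-1$, that of $r_k$ is never eventually $0$, and classes in the middle automata are singletons. Since automaton vertices have in-degree at most $2$, only finitely many relations involve a given $Y$. Hence every word admits only finitely many relation applications, and the Squier complex and its universal cover are locally finite.

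\emph{Remaining remarks.} Your fallback, a complex ``in which only expansions are allowed'', is then unnecessary. It is also not well defined, since every edge is an expansion seen from one endpoint and a contraction seen from the other. Note too that freeness alone does not give properness; trivial cube stabilizers together with local finiteness do. You can in fact bypass \cite{Fthesis} entirely:
\begin{itemize}
\item $\Delta(n,\ell)$ is CAT(0) by Theorem~\ref{theorem:CATSn,l}.
\item Its cube stabilizers are trivial by Proposition~\ref{proposition:action}.
\item It is locally finite, because it covers $G_{n,\ell}\backslash\Delta(n,\ell)$, whose $1$-skeleton is that of the Squier complex by Proposition~\ref{proposition:graphiso}.
\end{itemize}
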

Part (1) is due to results of \cite{Genevois, GS2} and (2) is due to results of \cite{Fthesis}.

The proof of Theorem \ref{theorem:bigone} uses methods that were developed by Hughes and the author in \cite{FH}, and later refined by the author in
\cite{Far}. For a given $G_{n,\ell}$, the results of \cite{FH} and \cite{Far} produce a simplicial complex $\Delta_{n,\ell}$ on which 
$G_{n,\ell}$ acts freely and cellularly. 
The main observation in the proof of Theorem \ref{theorem:bigone} is 
that the quotient of the $2$-skeleton $\Delta_{n,\ell}^{(2)}$ by the action of $G_{n,\ell}$ is isomorphic to a certain singular square complex $S(\mathcal{P}_{n,\ell}, \omega_{n,\ell})$ defined in \cite{GS}, called the \emph{Squier complex}. The group
$\pi_{1}(S(\mathcal{P}_{n,\ell}, \omega_{n,\ell}))$ is isomorphic to
$D(\mathcal{P}_{n,\ell}, \omega_{n,\ell})$ by results of \cite{GS}. Thus, we have the following sequence of isomorphisms: 
\[ G_{n,\ell} \, \cong \, \pi_{1}(G_{n,\ell}\backslash \Delta_{n,\ell}) \, \cong \, 
\pi_{1}(S(\mathcal{P}_{n,\ell}, \omega_{n,\ell})) \, \cong \, D(\mathcal{P}_{n,\ell},\omega_{n,\ell}), \]
from which the isomorphism result of Theorem \ref{theorem:bigone} follows. The finiteness results of Theorem \ref{theorem:bigone} will be deduced from a general framework that was described by the author in \cite{Far}. 

Our proof will be as self-contained as possible, but we will nevertheless need to use results from \cite{Far} and \cite{Far2} rather extensively. Very little background in diagram groups is assumed or needed, since we will effectively define a diagram group as the fundamental group of the relevant Squier complex.

After an earlier version of this paper was uploaded to the arXiv, the author learned of some overlap with previous results. The reader is referred to Section \ref{section:alternate} for complete details, but we should note here that Golan-Polak and Sapir \cite{GolanSapir2} previously observed that the groups $G_{n,\ell}$ are diagram groups, anticipating our main result. 

We now briefly outline the paper.
In Section \ref{section:diagram}, we introduce semigroup presentations $\mathcal{P}$ and their associated diagram groups
$D(\mathcal{P},\omega)$, where $w$ is a base word. In Section \ref{section:example}, we walk the reader through a procedure which, given $n$ and $\ell$, produces a semigroup presentation $\mathcal{P}_{n,\ell}$ and word $\omega_{n,\ell}$ such that $G_{n,\ell}$
is isomorphic to $D(\mathcal{P}_{n,\ell}, \omega_{n,\ell})$ . The justification of the procedure from Section \ref{section:example} begins in Section \ref{section:local} and ends in Section \ref{section:isomorphism}. Section \ref{section:local} introduces the concept of ``local generators" for a generalized Thompson group, and considers in detail the base case in which $\ell = (a,b)$. Section \ref{section:genlocal} describes local generators for the groups $G_{n,\ell}$ using the device of a labelled tree $T_{n,\ell}$.
Section \ref{section:CAT(0)} describes the construction of a CAT(0) cubical complex $\Delta(n,\ell)$ on which the group $G_{n,\ell}$ acts, and proves that the action is always free. Section \ref{section:isomorphism} concludes the proof of the isomorphism statement in Theorem \ref{theorem:bigone} by showing that the  $2$-skeleton of $G_{n,\ell} \backslash \Delta(n,\ell)$  is isomorphic to a connected component of the Squier complex for $\mathcal{P}_{n,\ell}$. Section \ref{section:finf} proves the $F_{\infty}$ property for the case in which $\ell$ is a list of rational numbers, and shows that $G_{n,\ell}$ is not finitely generated if $\ell$ has any irrational entries. The proof uses Brown's Finiteness Criterion \cite{B} and a framework from \cite{Far}. Section \ref{section:alternate} sketches some of the overlap with work of other authors and some additional directions for research.

\section{Diagram groups} \label{section:diagram}

\begin{definition} (Semigroup presentations; equivalence modulo a presentation)
Let $\Sigma$ be an alphabet. A \emph{positive word} in the alphabet $\Sigma$ is a string of symbols from $\Sigma$ without any occurrences of inverses (e.g., if $\Sigma = \{ a, b \}$, then $aba$ is positive, but $ab^{-1}a$ is not). The \emph{free monoid} $\Sigma^{\ast}$ on $\Sigma$ is the collection of all positive words in the symbols $\Sigma$, including the empty word, which we denote by $\varepsilon$. The \emph{free semigroup} $\Sigma^{+}$ on 
$\Sigma$ consists of all non-empty positive words in the symbols $\Sigma$. In both $\Sigma^{\ast}$ and $\Sigma^{+}$, the operation is concatenation of words.

We write $\omega' \equiv \omega''$ if $\omega'$ and $\omega''$ are identical, letter for
letter, as words. 

A \emph{semigroup presentation} $\mathcal{P}$ is a pair $\langle \Sigma \mid \mathcal{R} \rangle$, where $\Sigma$ is an alphabet and $\mathcal{R}$ is a collection of ordered pairs $(r_{1}, r_{2}) \in \Sigma^{+} \times \Sigma^{+}$. Two words $\omega', \omega'' \in \Sigma^{+}$ are \emph{equivalent
modulo $\mathcal{P}$} if there is a sequence
\[ \omega' \equiv \omega_{1}, \ldots, \omega_{m} \equiv \omega'' \]
such that, for each $i \in \{1, \ldots, m-1 \}$, there is an ordered pair $(r_{1},r_{2}) \in \mathcal{R}$ such that $\omega_{i+1}$ is the result of replacing some occurrence of either $r_{1}$ or $r_{2}$ in $\omega_{i}$ with (respectively) $r_{2}$ or $r_{1}$. We write $\omega' =_{\mathcal{P}} \omega''$, or simply $\omega' = \omega''$ if the presentation $\mathcal{P}$ is clear from the context.  Equivance module $\mathcal{P}$ is an equivalence relation on $\Sigma^{+}$. We denote the equivalence class of a word $\omega$ by  $[\omega]$.

We will write $r_{1} \rightarrow r_{2}$ for members of $\mathcal{R}$, in place of $(r_{1},r_{2})$.
\end{definition}

\begin{figure}[!t] 
\begin{center} 
\begin{tikzpicture}

\filldraw[lightgray,thick] (0,0) -- (2,2) -- (0,4) -- (-2,2) --  cycle; 
\draw[black, very thick] (0,0) -- (2,2) -- (0,4) -- (-2,2) -- cycle; 
\draw[black, very thick, ->] (0,0) -- (1.1,1.1); 
\node at (2.5,.8){$(ar_{1}b,r'_{1} \rightarrow r'_{2},c)$}; 
\draw[black, very thick, ->] (-2,2) -- (-.9,3.1); 
\node at (-2.45,3.2) {$(ar_{2}b,r'_{1}\rightarrow r'_{2},c)$};
\draw[black, very thick, ->] (0,0) -- (-1.1,1.1); 
\node at (-2.45,.8){$(a,r_{1} \rightarrow r_{2},br'_{1}c)$};
\draw[black, very thick, ->] (2,2)-- (.9,3.1); 
\node at (2.4,3.2){$(a,r_{1} \rightarrow r_{2},br'_{2}c)$};

\node at (0,-.3){$ar_{1}br'_{1}c$}; 
\node at (2.7,2){$ar_{1}br'_{2}c$}; 
\node at (-2.7,2){$ar_{2}br'_{1}c$}; 
\node at (0,4.3){$ar_{2}br'_{2}c$}; 
\end{tikzpicture}
\end{center}
\caption{The square $(a,r_{1} \rightarrow r_{2}, b, r'_{1} \rightarrow r'_{2}, c)$ in a Squier complex.}
\label{squier}
\end{figure}

\begin{definition}  \cite{GS} (The Squier complex of a semigroup presentation)
Let $\mathcal{P} = \langle \Sigma \mid \mathcal{R} \rangle$ be a semigroup presentation.
The \emph{Squier complex} $S(\mathcal{P})$ of $\mathcal{P}$ is a singular square complex defined as follows.
The set of vertices of $S(\mathcal{P})$ is $\Sigma^{+}$.  A directed edge is denoted by a triple
$(a, r_{1} \rightarrow r_{2}, b)$, where $(r_{1} \rightarrow r_{2}) \in \mathcal{R}$ and
$a,b \in \Sigma^{\ast}$. The directed edge in question has $ar_{1}b$ as its initial vertex and  $ar_{2}b$ as its terminal vertex. Squares take the general form
$(a,r_{1}\rightarrow r_{2}, b, r'_{1} \rightarrow r'_{2}, c)$, where 
$(r_{1} \rightarrow r_{2})$, $(r'_{1} \rightarrow r'_{2}) \in \mathcal{R}$ and $a,b,c \in \Sigma^{\ast}$. The attaching map of a typical square is indicated in Figure \ref{squier}. 

If $\omega \in \Sigma^{+}$, then we let $S(\mathcal{P},\omega)$ denote the connected component of $S(\mathcal{P})$ containing the vertex $\omega$.
\end{definition}

\begin{definition} \cite{GS} (The diagram group $D(\mathcal{P},\omega)$) \label{definition:Dpw}
Let $\mathcal{P} = \langle \Sigma \mid \mathcal{R} \rangle$ be a semigroup presentation and let $\omega \in \Sigma^{+}$. 
The diagram group $D(\mathcal{P},\omega)$ is the fundamental group of
$S(\mathcal{P},\omega)$ with basepoint at $\omega$.
\end{definition}

\begin{remark}
Assume that a semigroup presentation $\mathcal{P}$ is given.
Guba and Sapir \cite{GS} describe how to make a certain class of oriented labelled planar graphs, called \emph{semigroup diagrams over $\mathcal{P}$}, into a group with respect to a natural stacking operation.
Strictly speaking, one needs to fix a base word $\omega \in \Sigma^{+}$, and then the semigroup diagrams over $\mathcal{P}$ whose top and bottom labels are $\omega$ become a group, which is denoted by $D(\mathcal{P},\omega)$. It is a theorem of \cite{GS} that $D(\mathcal{P},\omega)$ is isomorphic to $\pi_{1}(S(\mathcal{P},\omega))$. 

In the main argument of this paper, we can avoid any mention of semigroup diagrams and simply use the isomorphism
$D(\mathcal{P},\omega) \cong \pi_{1}(S(\mathcal{P},\omega))$ as a definition
of $D(\mathcal{P},\omega)$ (as above). 
\end{remark}

\section{An example} \label{section:example} 

In this section, we consider the group $G_{3,\ell}$, where $\ell = (5/26, 7/26, 5/6)$. We will completely describe how to produce an associated semigroup presentation $\mathcal{P}$ and a base word $\omega$ such that $G_{3, \ell} \cong \mathcal{D}(\mathcal{P},\omega)$. 

\subsection{$3$-ary expansions and the associated automata} \label{subsection:auto}

The first step in describing $G_{3,\ell}$ as a diagram group is to find $3$-ary expansions for the entries of $\ell$. A straightforward calculation shows that
\begin{align*}
\displaystyle 5/26 &= .\overline{012},  \\
\displaystyle 7/26 &= .\overline{021},  \text{ and }\\
\displaystyle 5/6 &= .2\overline{1}.
\end{align*}

\begin{remark} \label{remark:nary}
More generally, if $n \geq 2$ and $\ell = (r_{1}, \ldots, r_{k})$, we find $n$-ary expansions of each entry of $\ell$ subject to certain constraints:
\begin{enumerate}
\item if $r_{i}$ ($i < k$) admits a terminating $n$-ary expansion, then we choose the (unique) expansion that terminates with an infinite sequence of $0$s;
\item if $r_{k}$ admits a terminating $n$-ary expansion, then we choose the (unique) expansion that terminates with an infinite sequence of $n-1$s;
\item we choose a representation in which the overlined subsequence is not a proper power;
\item we choose the expansion of each $r_{i}$ in such a way that the final symbol under the overline is never equal to the final symbol preceding the overlined subsequence.
\end{enumerate}
The latter condition can always be satisfied. For example, instead of $.01\overline{121}$, we could substitute the expansion $.0\overline{112}$, and repeat the indicated procedure as necessary.
\end{remark}

With each $3$-ary expansion, we now associate a labelled directed graph $A_{n}(r_{i})$ called an  \emph{$n$-ary automaton} (or \emph{automaton} if the $n$ is clear from the context). Each automaton has a basepoint $v$. A directed path $p$, labelled by the non-repeating symbols in the expansion, connects $v$ to a second vertex $v'$. The length of $p$ is equal to the total number of digits in the non-repeating portion of the expansion, and each directed edge is labelled by a single digit. At $v'$, we attach a directed loop, similarly labelled by the overlined symbols in the expansion.
The $n$-ary automata for $5/26$, $7/26$, and $5/6$ are depicted in Figure \ref{auto}. (The alphabetical labels will be explained in the next subsection.)

\begin{figure}[!b] 
\begin{center}
\begin{tikzpicture}

\draw[thick] (0,0) circle (1.5); 

\filldraw[thick] (-1.5,0) circle (.06);   
\draw[thick,->] (-1.5,0) arc (180:240:1.5);  
\node at (-1.8,0){$A$};
\node at (-.85,0.05){$v=v'$};

\filldraw[thick] (.75,-1.3) circle (.06); 
\draw[thick, ->] (.75,-1.3) arc (-60:0:1.5);
\node at (.9, -1.55){$B$};

\filldraw[thick](.75,1.3) circle (.06); 
\draw[thick, ->](.75,1.3) arc (60:120:1.5);
\node at (.9, 1.55){$A$};

\node at (-.9,-1.55){$0$}; 
\node at (1.8,0){$1$}; 
\node at (-.9,1.55){$2$};

\draw[thick] (7,0) circle (1.5); 

\filldraw[thick] (5.5,0) circle (.06);   
\draw[thick,->] (5.5,0) arc (180:240:1.5);  
\node at (5.2,0){$C$};
\node at (6.15,0.05){$v=v'$};

\filldraw[thick] (7.75,-1.3) circle (.06); 
\draw[thick, ->] (7.75,-1.3) arc (-60:0:1.5);
\node at (7.9, -1.55){$D$};

\filldraw[thick](7.75,1.3) circle (.06); 
\draw[thick, ->](7.75,1.3) arc (60:120:1.5);
\node at (7.9, 1.55){$E$};

\node at (6.1,-1.55){$0$}; 
\node at (8.8,0){$2$}; 
\node at (6.1,1.55){$1$};


\draw[thick](4.25,-4) circle (1); 

\filldraw[thick](1.25,-4) circle (.06); 
\node at (.95,-4){$F$}; 
\node at (1.25,-4.3){$v$}; 
\filldraw[thick](3.25,-4) circle (.06); 
\node at (3.55,-4){$G$}; 
\node at (3.05,-4.25){$v'$}; 

\draw[thick](1.25,-4) -- (3.25,-4); 
\node at (2.25,-3.7){$2$};

\draw[thick, ->](1.25,-4) -- (2.35,-4); 
\draw[thick, ->](4.25,-5) arc (-90:0:1); 
\node at (5.55,-4){$1$};

\end{tikzpicture}
\end{center}
\caption{The $n$-ary automata for the numbers $5/26$, $7/26$, $5/6$ are depicted here, in clockwise order from the top left.}
\label{auto}
\end{figure}

\begin{remark}
In the case that $r_{i}$ has a non-repeating expansion (because $r_{i}$ is irrational), the automaton simply consists of an infinite directed ray, labelled by the digits in the expansion of $r_{i}$.

If the expansion of $r_{i}$ is purely periodic, then $v = v'$ and $p$ is empty.
\end{remark}

\begin{remark}
Note that, in view of conditions (1) and (2) above, the $n$-ary automaton of a number $r_{i}$ with a terminating $n$-ary expansion is not unique, due to the possible dependence on $i$. In what follows we will always, however, work with a specific sequence $\ell$ in mind, and within this context the $k$-tuple
$(A_{n}(r_{1}), \ldots, A_{n}(r_{k}))$ is always uniquely determined by $\ell$. We can therefore speak of ``the" automaton $A_{n}(r_{i})$ without fear of ambiguity. 
\end{remark}

\begin{remark} \label{remark:minimal} (minimality of the chosen representative)
If $.\alpha \overline{\beta}$ is an $n$-ary expansion of $r_{i}$ satisfying constraints (1) and (2) from Remark \ref{remark:nary}, and $.\gamma \overline{\delta}$ 
satisfies (1)-(4), then $|\alpha| \geq |\gamma|$ and $\beta$ is a cyclic shift of some power of $\delta$. In particular, $|\beta| = m|\delta|$, for some $m>0$.  
\end{remark}

\subsection{Labellings of the automata and semigroup presentations}
\label{subsection:labels} 

Next we label the vertices of the automata from Subsection \ref{subsection:auto}.  
We first define equivalence relations on the vertices of each individual automaton.
In the automaton for $5/26$, we define two vertices $v_{1}$ and $v_{2}$ to be equivalent 
if there is a directed edge-path from $v_{1}$ to $v_{2}$ (or from $v_{2}$ to $v_{1}$) that is labelled by a sequence of $2$s. In the automaton for $5/6$, we define two vertices $v_{1}$ and $v_{2}$ to be equivalent if there is a directed edge-path from $v_{1}$ to $v_{2}$ (or from $v_{2}$ to $v_{1}$) that is labelled by a sequence of $0$s. In the automaton for $7/26$, two vertices are equivalent only if they are identical.

\begin{remark}
The general situation is as follows. Let $\ell = (r_{1}, \ldots, r_{k})$. Two vertices are equivalent in 
the automaton for $r_{1}$ if they are connected by a directed path labelled by $n-1$s. Two vertices are equivalent in the automaton for $r_{k}$ if they are connected by a directed path labelled by $0$s. In the remaining automata (for the $r_{i}$ with $1 < i < k$) no two distinct vertices are equivalent.
\end{remark}

Now we introduce labels for all of the vertices. Two vertices get different labels if and only if they are not equivalent. The alphabetical labellings of the vertices in Figure \ref{auto} follow this rule. 

In $A_{3}(5/26)$ (more generally, in $A_{n}(r_{1})$), each equivalence class contains a unique vertex whose outgoing edge is not labelled by $2$ (respectively, by $n-1$). We call such a vertex \emph{reduced}.
In $A_{3}(5/6)$ (more generally, in  $A_{n}(r_{k})$), each equivalence class contains a unique vertex whose outgoing edge is not labelled by $0$. Such vertices are also said to be \emph{reduced}. In $A_{3}(7/26)$ (more generally, in the automata $A_{n}(r_{i})$, $1<i<k$), all vertices are considered to be reduced.

We can now read a semigroup presentation $\mathcal{P}_{3,\ell}$ from the labelled automaton. We assume that `$x$' is not among the vertex labels selected above. Let $\Sigma_{3,\ell}$ be the collection of all vertex labels and 
the additional symbol $x$. We define $\mathcal{R}_{3,\ell}$ as follows:
\begin{enumerate}
\item For each reduced vertex in the automata for the $r_{i}$ ($1<i<k$), we introduce 
a relation $(X, x^{i}Yx^{n-i-1})$, where $X$ is the label of the reduced vertex in question, $i$ is the number labelling the outgoing arrow, and $Y$ is the label of the target vertex;
\item for each reduced vertex in the automaton for $r_{1}$, we introduce a relation
$(X,Yx^{n-i-1})$, where $X$, $Y$, and $i$ are as above;
\item for each reduced vertex in the automaton for $r_{k}$, we introduce a relation
$(X,x^{i}Y)$, where $X$, $Y$, and $i$ are as above;
\item we add the additional relation $(x,x^{n})$.
\end{enumerate} 

Following the above procedure for the automata in Figure \ref{auto}, we conclude that
$\mathcal{P}_{3,\ell} = \langle \Sigma_{n,\ell} \mid \mathcal{R}_{n,\ell} \rangle$, where
\begin{align*}
\Sigma_{n,\ell} &= \{ A, B, C, D, E, F, G, x \} \\
\mathcal{R}_{n,\ell} &= \{ (A, Bxx), (B,Ax), (C,Dxx), (D,xxE), \\
& \quad \quad (E,xCx), (F,xxG), (G,xG), (x,xxx) \}
\end{align*}

\subsection{Determination of the basepoint $\omega_{n,\ell}$} \label{subsection:basepoint}

Let $T_{n}$ denote the rooted ordered infinite $n$-ary tree. Each node in the tree has an associated \emph{address}
from $\{0, \ldots, n-1 \}^{\ast}$, 
which we can describe as follows. The address of the root is the empty string $\varepsilon$. More generally, if
the address $\omega$ of a vertex $v$ is given, then the addresses of the children of $v$ are, from left to right, 
$\omega 0$, $\ldots$, $\omega (n-1)$ .  

We say that an automaton \emph{accepts} an address if there is a directed edge-path from the basepoint 
that is labelled by $\omega$.

We choose a finite rooted $n$-ary tree such that the address of each leaf is accepted by at most one of the automata $A_{n}(r_{i})$. If the address of a leaf $z$ is accepted by an automaton $A_{n}(r_{i})$, then there is a directed path $p$ in $A_{n}(r_{i})$ issuing from the basepoint such that the label of $p$ matches the address of $z$. Let $X$ denote the label in $A_{n}(r_{i})$ of the terminal vertex $\tau(p)$. We label $z$ by $X$.
Now, beginning with the leftmost labelled leaf and ending with the rightmost labelled leaf, we give each previously unlabelled leaf the label $x$.

The result of applying this procedure to our running example is depicted in Figure \ref{mytree}.

\begin{figure}[!h] 
\begin{center}
\begin{tikzpicture}

\draw[thick] (0,0) -- (-1,-1); 
\draw[thick] (0,0) -- (0,-1); 
\draw[thick] (0,0) -- (1,-1); 

\draw[thick] (-1,-1) -- (-2,-2); 
\draw[thick] (-1,-1) -- (-1,-2); 
\draw[thick] (-1,-1) -- (0,-2); 

\node at (-1,-2.3){$A$};
\node at (0,-2.3){$E$};
\node at (0,-1.3){$x$};
\node at (1,-1.3){$G$};

\filldraw[thick](0,0) circle (.04);
\filldraw[thick](-1,-1) circle (.04);
\filldraw[thick](0,-1) circle (.04);
\filldraw[thick](1,-1) circle (.04);
\filldraw[thick](-2,-2) circle (.04);
\filldraw[thick](-1,-2) circle (.04);
\filldraw[thick](0,-2) circle (.04);

\end{tikzpicture}
\end{center}
\caption{The leaves of the given tree determine the basepoint
$\omega_{n,\ell}$.}
\label{mytree}
\end{figure}

Now we read the word labelling the leaves of the tree, $\omega_{3,\ell} = AExG$. This is the baseword of our diagram group. We then claim that there is an isomorphism:
\[ G_{3,\ell} \cong \mathrm{D}(\mathcal{P}_{3,\ell}, \omega_{3,\ell}). \]
We shall now turn to justifying the general procedure sketched above. 

\section{Local generation of the groups $G_{n,(a,b)}$} \label{section:local}

Many generalized Thompson groups $G$ have piecewise definitions: for a given $h \in G$, there is a finite subdivision of $Dom(h)$ into pieces $P_{1}, \ldots, P_{m}$ such that the restriction of $h$ to $P_{i}$ is a function having certain additional properties. 
A central idea in \cite{FH} and \cite{Far} is to collect these smaller functions
into an object called an inverse semigroup $S$, and then to view $G$ as being ``locally generated" (or locally determined) by the inverse semigroup $S$. Both \cite{FH} and \cite{Far}
also use the semigroup $S$ as the starting point for a construction of a space on which $G$ acts. This is the general pattern to be followed in the remainder of the paper.

In this section and the next, we will carefully describe inverse semigroups $S_{n,\ell}$ that ``locally generate" $G_{n,\ell}$. The analysis in the current section is confined to the groups $G_{n,(a,b)}$ (i.e., $k=2$). Some of the material here overlaps with standard accounts such as \cite{CFP}, which covers the case $(n,a,b) = (2,0,1)$.

\subsection{Inverse semigroups and associated groups}

\begin{definition}(Inverse semigroups; the action of an inverse semigroup on a set) \label{definition:inv}
Let $X$ be a set. A \emph{partial bijection} of $X$ is a bijective function $h: A \rightarrow B$ between subsets $A$, $B$ of $X$. The function with empty domain and range will be denoted by $0$. 

An \emph{inverse semigroup} $S$ is a collection of partial bijections that is closed under compositions and inverses, where compositions are defined ``on overlaps": if $s_{1}, s_{2} \in S$, then the domain of $s_{1} \circ s_{2}$ is $s_{2}^{-1}(Dom(s_{1}))$. 
If $S$ is an inverse semigroup of partial bijections of $X$, then we say that $S$ \emph{acts} on $X$.    
\end{definition}

\begin{definition}\cite{FH, Far}(The group $\Gamma(S)$)
\label{definition:Ginv}
Let $S$ be an inverse semigroup acting on a set $X$. A bijection $h: X \rightarrow X$
is \emph{locally determined} by $S$ if, for some $m$, there are partitions
$\mathcal{P}_{1} = \{ A_{1}, \ldots, A_{m} \}$ and $\mathcal{P}_{2} = \{ B_{1}, \ldots, B_{m} \}$ of $X$ and partial bijections $s_{i}: A_{i} \rightarrow B_{i}$ (for $i = 1, \ldots, m$) such that $h_{\mid A_{i}} = s_{i}$ and $s_{i} \in S$ for all $i$. We sometimes denote such an $h$ by
$h(s_{1}, \ldots, s_{m})$.

We let $\Gamma(S)$ be the collection of all homeomorphisms of $X$ that are locally determined by 
$S$.
\end{definition}

\begin{remark}
The set $\Gamma(S)$ becomes a group under composition of functions if $\Gamma(S) \neq \emptyset$. The latter is true precisely when $X$ can be partitioned by the domains of some collection $\{ s_{1}, \ldots, s_{n} \} \subseteq S$.  
\end{remark}

\subsection{Standard $n$-adic intervals and standard $n$-adic transformations}

\begin{definition}
\label{definition:nadic}
($N$-adic numbers, standard $n$-adic intervals; standard $n$-adic transformations; $S_{n}$)
If $a$ and $m$ are non-negative integers, then $a/n^{m}$ is an \emph{$n$-adic} number.  The interval
\[ I = \left[ \frac{a}{n^{m}}, \frac{a+1}{n^{m}} \right)  \]
is a \emph{standard $n$-adic interval} if $I \subseteq [0,1]$. 

If $I_{1}$ and $I_{2}$ are standard $n$-adic intervals, then we let $\sigma_{I_{1}}^{I_{2}}$ be the unique orientation-preserving linear homeomorphism from $I_{1}$ to $I_{2}$. We say that $\sigma_{I_{1}}^{I_{2}}$ is a \emph{standard $n$-adic transformation}, and we let $S_{n}$ be the set of all $n$-adic transformations. 
\end{definition}



\begin{definition}(The address of a standard $n$-adic interval) 
Let $\omega \in \{ 0, \ldots, n-1 \}^{\ast}$. The set of all real 
numbers $x$ such that $x$ admits an $n$-ary expansion
\[ .\omega a_{1} a_{2} a_{3} \ldots \]
is the closure of a standard $n$-adic interval. If we omit the number
$.\omega \overline{n-1}$,
then the remaining $x$ form a standard $n$-adic interval, which we denote $I_{\omega}$. 

Conversely, if a standard $n$-adic interval $I$ is given, then there is a unique $\omega$ such that $I = I_{\omega}$. We sometimes refer to $\omega$ as the \emph{address} of $I$ in this circumstance. 

The assignment $\omega \mapsto I_{\omega}$ determines a bijection
between $\{ 0, \ldots, n-1 \}^{\ast}$ and the collection of all standard $n$-adic intervals.
\end{definition}

\begin{convention}
In view of the bijection between addresses and standard $n$-adic intervals, we will write 
$\sigma^{\omega_{2}}_{\omega_{1}}$ in place of $\sigma^{I_{\omega_{2}}}_{I_{\omega_{1}}}$. 
\end{convention}

\begin{lemma} (Basic properties of the transformations
$\sigma_{\omega_{1}}^{\omega_{2}}$)  \label{lemma:basic}
Let $\omega_{1}, \omega_{2}$ be words in the alphabet $\{ 0, \ldots, n-1 \}$.
\begin{enumerate}
\item The standard $n$-adic transformation $\sigma_{\omega_{1}}^{\omega_{2}}$ is defined by a rule of the form
$x \mapsto ax + b$,
where $a$ is a power of $n$ and $b$ is an $n$-adic number in $[0,1)$.
\item The transformation $\sigma_{\omega_{1}}^{\omega_{2}}$ acts by prefix replacement. That is, if $x$ admits an $n$-ary expansion of the form $.\omega_{1} a_{1} a_{2} \ldots$
in which not all of the $a_{i}$ are $n-1$, then 
\[ \sigma_{\omega_{1}}^{\omega_{2}} (x) = .\omega_{2} a_{1} a_{2} \ldots. \]
The unique continuous extension of $\sigma_{\omega_{1}}^{\omega_{2}}$ to the closure of $I_{\omega_{1}}$ acts by prefix replacement on $.\omega_{1}\overline{n-1}$ as well.
\item The transformation $\sigma_{\omega_{1}}^{\omega_{2}}$ fixes a unique $x$ in 
$I_{\omega_{1}}$ if and only if 
$\omega_{i} \equiv \omega_{j} \tau$ ($\{i,j \} = \{ 1, 2 \}$) for some $\tau \in \{ 0, \ldots, n-1 \}^{+} - \{ n-1 \}^{+}$. If the latter holds, then the fixed point
is the rational number $.\omega_{j} \overline{\tau}$.
\item The transformation $\sigma_{\omega_{1}}^{\omega_{2}}$ has multiple fixed points if and only if $\omega_{1} \equiv \omega_{2}$, and thus $\sigma_{\omega_{1}}^{\omega_{2}} = id_{\omega_{1}}$. The same is true of the continuous extension of $\sigma_{\omega_{1}}^{\omega_{2}}$.
\end{enumerate}
\end{lemma}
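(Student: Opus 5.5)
The plan is to make everything explicit through the affine formula for $\sigma_{\omega_{1}}^{\omega_{2}}$ and then read off each property. For a word $\omega = d_{1}\cdots d_{m}$, write $a_{\omega} = \sum_{j} d_{j} n^{-j}$. Then $I_{\omega} = [a_{\omega}, a_{\omega} + n^{-m})$, and $a_{\omega}$ is an $n$-adic number in $[0,1)$. The unique orientation-preserving linear homeomorphism $I_{\omega_{1}} \to I_{\omega_{2}}$ is
\[ \sigma_{\omega_{1}}^{\omega_{2}}(x) = n^{|\omega_{1}| - |\omega_{2}|}\,(x - a_{\omega_{1}}) + a_{\omega_{2}} . \]
This is visibly of the form $x \mapsto ax + b$, with $a = n^{|\omega_{1}|-|\omega_{2}|}$ and $b = a_{\omega_{2}} - a\, a_{\omega_{1}}$ an $n$-adic number. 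This is (1). The one point needing care is the normalization of $b$: for example, when $\omega_{2}$ is shorter than $\omega_{1}$ the constant can be negative. So I would check exactly which range of $b$ the later arguments use, and adjust the statement of (1) (or the convention for $b$) accordingly.

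For (2), take $x = .\omega_{1} a_{1} a_{2} \ldots$. Then $x - a_{\omega_{1}} = n^{-|\omega_{1}|}\,(.a_{1}a_{2}\ldots)$, so multiplying by $a$ and adding $a_{\omega_{2}}$ gives $.\omega_{2} a_{1} a_{2} \ldots$. The hypothesis that the tail is not all $(n-1)$s is only there so that $x$ genuinely lies in the half-open interval $I_{\omega_{1}}$. The statement about the endpoint $.\omega_{1}\overline{n-1}$ then follows by continuity of the affine extension, or by the same computation.

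For (3) and (4), observe first that an affine map of slope $a \neq 1$ has exactly one fixed point in $\mathbb{R}$, while slope $1$ gives either the identity or no fixed point. If there are multiple fixed points (even for the extension to the closure), then $a = 1$ and $b = 0$. This forces $|\omega_{1}| = |\omega_{2}|$ and $a_{\omega_{1}} = a_{\omega_{2}}$, hence $I_{\omega_{1}} = I_{\omega_{2}}$, and so $\omega_{1} \equiv \omega_{2}$ by the address bijection; the converse is trivial. This gives (4).

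For (3), suppose $x \in I_{\omega_{1}}$ is fixed. Then $x \in I_{\omega_{1}} \cap I_{\omega_{2}}$, and two standard $n$-adic intervals that meet are nested, so one address is a prefix of the other. Since $\sigma_{\omega_{2}}^{\omega_{1}} = (\sigma_{\omega_{1}}^{\omega_{2}})^{-1}$ has the same fixed points, I may assume $\omega_{2} \equiv \omega_{1}\tau$ with $\tau$ nonempty, because $\tau$ empty is case (4). Write $x = .\omega_{1} t$, where $t$ is an infinite digit string not ending in $(n-1)$s. By (2), $\sigma(x) = .\omega_{1}\tau t$. Uniqueness of such expansions then gives $t = \tau t$, hence $t = \overline{\tau}$. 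This is a legal expansion exactly when $\tau \notin \{n-1\}^{+}$. Conversely, if $\tau \notin \{n-1\}^{+}$, then $.\omega_{1}\overline{\tau}$ lies in $I_{\omega_{1}}$ and is fixed by prefix replacement. Its uniqueness follows from slope $n^{-|\tau|} \neq 1$.

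I expect the main obstacle to be bookkeeping rather than ideas. The delicate points are the handling of the excluded expansions ending in $n-1$, which is where the condition $\tau \notin \{n-1\}^{+}$ comes from, and the precise range claimed for $b$ in (1).
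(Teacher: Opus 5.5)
Your proof is correct, and you are right to be suspicious of (1): the claim $b\in[0,1)$ is false as stated. For $n=2$ one has $\sigma_{1}^{\varepsilon}(x)=2x-1$. The sign problem is not tied to $\omega_{2}$ being shorter, since also $\sigma_{1}^{00}(x)=x/2-1/4$. What your formula actually proves is that $a=n^{|\omega_{1}|-|\omega_{2}|}$ and $b=a_{\omega_{2}}-a\,a_{\omega_{1}}\in\mathbb{Z}[1/n]$, with $b<1$. That is all the paper ever uses: slopes in $\langle n\rangle$, and preservation of $\mathbb{Z}[1/n]$ in the proof that $\Gamma(S_{n,\ell})=G_{n,\ell}$. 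So (1) should simply be restated that way.

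The paper writes out only (3), and argues purely with $n$-ary expansions. In the forward direction it rules out $\tau\in\{n-1\}^{+}$ by applying $(\sigma_{\omega_{1}}^{\omega_{2}})^{m}$ and letting $m$ grow. In the converse it proves uniqueness by taking an arbitrary fixed point $.b_{1}b_{2}\ldots$ and matching digits to get $b_{\alpha+m}=b_{\beta+m}$.

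You instead compare canonical expansions once, $t=\tau t$. That single comparison both excludes $\tau\in\{n-1\}^{+}$ and identifies the fixed point as $.\omega_{1}\overline{\tau}$. You then take uniqueness in the converse, and all of (4), from the affine formula. Slope $\neq 1$ gives at most one fixed point, and slope $1$ with a fixed point gives the identity, so equal lengths and $a_{\omega_{1}}=a_{\omega_{2}}$ force $\omega_{1}\equiv\omega_{2}$.

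Your route is shorter. It also avoids the paper's loose phrasing ``$b_{j}=n-1$ for arbitrarily large $j$'', which is not by itself a contradiction; the iteration really shows that every digit after $\omega_{1}$ is $n-1$. The paper's digit-only argument has the merit of staying in the prefix-replacement language that is reused in Lemma~\ref{lemma:stab}.
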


\begin{proof}
We prove (3), leaving (1), (2) and (4) to the reader.

Assume first that $\sigma_{\omega_{1}}^{\omega_{2}}$ has a unique fixed point $x$ in $I_{\omega_{1}}$. We assume that 
$.b_{1}b_{2} \ldots$ is the unique $n$-ary expansion of $x$ that does not end in an infinite sequence of $n-1$s. Since $x \in I_{\omega_{1}} \cap I_{\omega_{2}}$, both $\omega_{1}$ and $\omega_{2}$ are prefixes of
$b_{1}b_{2}b_{3} \ldots$. If $\omega_{1} \equiv \omega_{2}$, then 
the transformation $\sigma_{\omega_{1}}^{\omega_{2}}$ is the identity on
$I_{\omega_{1}}$, and thus there are infinitely many fixed points, contrary to our hypothesis. It must therefore be that $\omega_{1} \equiv \omega_{2} \tau$ or $\omega_{2} \equiv \omega_{1} \tau$, for some $\tau \in \{ 0, \ldots, n-1 \}^{+}$.
Suppose, for a contradiction, that $\tau \in \{ n-1 \}^{+}$ and, without loss of generality, that $\omega_{2} \equiv \omega_{1} \tau$. We apply the $m$th power of $\sigma_{\omega_{1}}^{\omega_{2}}$ to $x$, assuming that $x = .\omega_{1} b_{\ell} b_{\ell+1} \ldots$ is the $n$-ary expansion of $x$ that does not end in an infinite sequence of $n-1$s:
\begin{align*}
.\omega_{1} b_{\ell} b_{\ell+1} \ldots &= \left( \sigma_{\omega_{1}}^{\omega_{2}} \right)^{m}(.\omega_{1}b_{\ell}b_{\ell+1} \ldots) \\
&= .\omega_{1} \tau^{m} b_{\ell} b_{\ell+1}\ldots.
\end{align*}
Comparing these $n$-ary expansions, and letting $m$ be arbitrarily large, we find that $b_{j} = n-1$ for arbitrarily large $j$. This is a contradiction. Thus,
$\tau \not \in \{ n-1 \}^{+}$, which proves the forward direction.

Conversely, assume without loss of generality that $\omega_{1} \equiv \omega_{2} \tau$, where $\tau \in \{ 0, \ldots, n-1 \}^{+} - \{ n-1 \}^{+}$. Consider the
number $x$ with the $n$-ary expansion $.\omega_{2} \overline{\tau}$. We apply $\sigma_{\omega_{1}}^{\omega_{2}}$ to $x$, using (2):
\begin{align*}
\sigma_{\omega_{1}}^{\omega_{2}}(x) &= \sigma_{\omega_{1}}^{\omega_{2}}(.\omega_{2}\tau \tau \tau \ldots) \\
&= \sigma_{\omega_{1}}^{\omega_{2}}(.\omega_{1} \tau \tau \ldots) \\
&= .\omega_{2} \tau \tau \tau \ldots.
\end{align*}
It follows that $.\omega_{2} \overline{\tau}$ is a fixed point of
$\sigma_{\omega_{1}}^{\omega_{2}}$. If $z=.b_{1}b_{2}b_{3}$ is an arbitrary fixed point, where infinitely many of the $b_{i}$ are not equal to $n-1$, then $\omega_{1}$ is a prefix of $z$. We can therefore 
write $\omega_{2} \equiv b_{1}b_{2}\ldots b_{\alpha}$, $\omega_{1} \equiv b_{1}b_{2} \ldots b_{\beta}$, and $\tau \equiv b_{\alpha + 1} \ldots
b_{\beta}$, for some $\alpha < \beta$. Applying $\sigma_{\omega_{1}}^{\omega_{2}}$, we find that
\[ .b_{1}b_{2}b_{3} \ldots \, \, = \, \, .b_{1} \ldots b_{\alpha} b_{\beta + 1} b_{\beta + 2} \ldots. \]
Comparing these $n$-ary expansions, we conclude that 
$b_{\alpha + m} = b_{\beta + m}$ for all $m \in \mathbb{N}$. We easily conclude that $z = .\omega_{2} \overline{\tau}$, so the fixed point of
$\sigma_{\omega_{1}}^{\omega_{2}}$ is unique.  
\end{proof}

\subsection{Local generation of the groups $G_{n,(a,b)}$}
\label{subsection:Gnab}

\begin{lemma} (Subdivision into standard $n$-adic transformations) \label{lemma:subdivision}
Let $f: [a,b) \rightarrow [c,d)$ be an increasing linear homeomorphism such that
\begin{enumerate}
\item $a$, $b$, $c$, and $d$ are $n$-adic numbers in $[0,1]$, and 
\item $f'(x)$ is a fixed integral power of $n$, for all $x \in [a,b)$.
\end{enumerate}
There are partitions $\{ I_{1}, \ldots, I_{m} \}$ and $\{ I'_{1}, \ldots, I'_{m} \}$ of $[a,b)$ and $[c,d)$ (respectively) into standard $n$-adic intervals such that, for each $j$, $f_{\mid I_{j}} : I_{j} \rightarrow I'_{j}$ is a standard $n$-adic transformation. 
\end{lemma}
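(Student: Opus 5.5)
Write $f(x) = n^{p}x + \beta$, where $p \in \mathbb{Z}$ is the fixed exponent from hypothesis (2). Because $a$ and $c = f(a)$ are $n$-adic and $n^{p}$ is a power of $n$, the constant $\beta = c - n^{p}a$ lies in $\mathbb{Z}[1/n]$. So $f$ carries $n$-adic numbers to $n$-adic numbers, and the same holds for $f^{-1}$. I will choose a fine enough uniform grid on $[a,b)$ and show that both the grid intervals and their images are standard $n$-adic intervals. Restricted to any one of them, $f$ is then an increasing linear bijection between standard $n$-adic intervals. By uniqueness in Definition \ref{definition:nadic}, such a restriction is the standard $n$-adic transformation $\sigma_{I_{j}}^{I'_{j}}$.

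\textbf{Choice of grid.} Choose $m \geq 0$ so large that $n^{m}a$, $n^{m}b$, $n^{m}c$, $n^{m}d$ are all integers, and also $m+p \geq 0$ and $m \geq 0$. Partition $[a,b)$ into the intervals $I_{j} = [t/n^{m}, (t+1)/n^{m})$ for integers $t$ with $n^{m}a \leq t < n^{m}b$. Each $I_{j}$ lies in $[a,b) \subseteq [0,1]$, so each is a standard $n$-adic interval. Its image $f(I_{j})$ is a half-open interval of length $n^{p-m} = n^{-(m-p)}$ with left endpoint $f(t/n^{m}) = n^{p-m}t + \beta$.

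\textbf{Checking that the images are standard.} For $f(I_{j})$ to be standard $n$-adic, its left endpoint must be an integer multiple of $n^{-(m-p)}$, and $m-p$ must be nonnegative. The term $n^{p-m}t$ already has this form. So the key is to enlarge $m$ further, so that $\beta \in n^{-(m-p)}\mathbb{Z}$ and $m-p \geq 0$; both are possible because $\beta$ is $n$-adic. Since the first requirements on $m$ are preserved under enlarging $m$, a single $m$ satisfies all of them.

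The images $f(I_{j})$ are disjoint and tile $f([a,b)) = [c,d)$, because $f$ is an increasing bijection. They lie in $[0,1]$, so the $I'_{j} = f(I_{j})$ are standard $n$-adic intervals partitioning $[c,d)$. This completes the proof.

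\textbf{Main obstacle.} The only delicate point is the bookkeeping on the grid exponent. We need $m$ large enough that the domain grid is $n$-adic, and also $m - p \geq 0$ with $\beta$ divisible at scale $n^{-(m-p)}$. Taking $m$ large enough handles all of these at once.
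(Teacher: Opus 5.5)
Your proof is correct and is essentially the paper's argument. Both subdivide $[a,b)$ uniformly at an $n$-adic scale fine enough that every piece and its image lie on the $n$-adic grid: the paper does this in two stages (first into standard intervals of length $n^{-k}$, then at the finer scale read off from the common denominator of $c$ and $d$), while you pick a single large $m$ using $\beta \in \mathbb{Z}[1/n]$, and the condition $m+p \geq 0$ in your grid choice is superfluous since what is actually needed is $m-p \geq 0$, which you impose in the next paragraph.
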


\begin{proof}
We can first write $a$ and $b$ over a common power of $n$:
\[ a = \frac{\alpha}{n^{k}}; \quad \quad b = \frac{\beta}{n^{k}}. \]
Subdivide $[a,b)$ into the intervals
\[ \left[ \frac{\alpha}{n^{k}}, \frac{\alpha+1}{n^{k}} \right), \quad
\left[ \frac{\alpha+1}{n^{k}}, \frac{\alpha+2}{n^{k}} \right), \quad \ldots \quad 
\left[ \frac{\beta - 1}{n^{k}}, \frac{\beta}{n^{k}} \right).
\] Each of these intervals is a standard $n$-adic interval.
The conditions on $f$ imply that $f(x) = n^{\alpha}x+d$, where $\alpha \in \mathbb{Z}$ and $d \in \mathbb{Z}[1/n]$. It follows easily that the restriction of 
$f$ to each subinterval also satisfies the hypotheses of Lemma \ref{lemma:subdivision}. It therefore suffices to prove the lemma in the case that $[a,b)$ is a standard $n$-adic interval.

Since $b-a$ is a power of $n$ and $f'(x)$ is also a power of $n$, $d-c$ is a power of $n$; $d-c = n^{m}$, say. Let 
\[ c = \frac{\gamma}{n^{\ell}} \quad \text{and} 
\quad d = \frac{\delta}{n^{\ell}}, \]
where $\gamma, \delta \in \mathbb{N} \cup \{ 0 \}$.
A direct calculation shows that $\delta - \gamma = n^{\ell + m}$. It follows that $\ell +m$ is a non-negative integer. We subdivide $[c,d)$ into intervals as follows:
\[ \left[ \frac{\gamma}{n^{\ell}}, \frac{\gamma+1}{n^{\ell}} \right), \quad
\left[ \frac{\gamma+1}{n^{\ell}}, \frac{\gamma+2}{n^{\ell}} \right), \quad \ldots \quad 
\left[ \frac{\delta - 1}{n^{\ell}}, \frac{\delta}{n^{\ell}} \right).
\]
We note that there are precisely $n^{\ell+m}$ of these intervals. If we now subdivide the domain $[a,b)$ into $n^{\ell+m}$ equal parts, then each of these parts remains a standard $n$-adic interval, and the restrictions of $f$ map each of these parts to one of the above intervals by a standard $n$-adic transformation. 
\end{proof}

\begin{lemma} (Local generation of $G_{n,(a,b)}$ by standard $n$-adic transformations)
\label{lemma:localgen}
Let $[a,b] \subseteq [0,1]$ and $n > 1$. If $h \in G_{n,(a,b)}$, then there are 
standard $n$-adic transformations $\sigma_{1}, \ldots, \sigma_{\ell}$ such that 
the restriction of $\gamma(\sigma_{1}, \ldots, \sigma_{\ell})$ to $[a,b]$ is equal to $h$.
\end{lemma}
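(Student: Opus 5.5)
The plan is to cut $[a,b]$ at the breakpoints of $h$ and treat the two end pieces by extending them slightly past $a$ and $b$. Let $h \in G_{n,(a,b)}$ and let $a < x_{1} < \cdots < x_{m} < b$ be the points where $h'$ is undefined. By the definition of $G_{n,(a,b)}$ these $x_{i}$ are $n$-adic, and so are their images $h(x_{i})$, since $h$ preserves $\mathbb{Z}[1/n]\cap[a,b]$. On each piece $h$ is affine with slope a power of $n$. We then produce standard $n$-adic transformations piece by piece and let $\gamma(\sigma_{1},\ldots,\sigma_{\ell})$ be the partial map they assemble to, with the final point $b$ handled by continuous extension (cf. Lemma \ref{lemma:basic}(2)).

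\emph{Interior pieces.} For $1 \le i < m$, the restriction $h: [x_{i},x_{i+1}) \to [h(x_{i}),h(x_{i+1}))$ is an increasing linear homeomorphism between intervals with $n$-adic endpoints, with constant slope a power of $n$. Lemma \ref{lemma:subdivision} applies directly and breaks this piece into finitely many standard $n$-adic transformations.

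\emph{End pieces.} On $[a,x_{1})$ write $h(x) = n^{k}x + c$. Choose two $n$-adic points $y<y'$ in this piece. Since $h(y)$ is $n$-adic, $c = h(y) - n^{k}y \in \mathbb{Z}[1/n]$. The affine map $x \mapsto n^{k}x+c$ fixes $a$. Choose an $n$-adic number $a' \le a$ as follows:
\begin{itemize}
\item if $a$ is $n$-adic (in particular if $a=0$), take $a'=a$;
\item otherwise take $a'$ close enough to $a$ that $n^{k}a'+c \ge 0$, which is possible because $n^{k}a + c = a \geq 0$ (if $a>0$ this is automatic for $a'$ near $a$).
\end{itemize}
The image of $a'$ is $n^{k}a'+c$, which is $n$-adic. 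The extended map $[a',x_{1}) \to [n^{k}a'+c, h(x_{1}))$ then satisfies the hypotheses of Lemma \ref{lemma:subdivision}, so it is a union of standard $n$-adic transformations. Its restriction to $[a,x_{1})$ is $h$.

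The piece $[x_{m},b)$ is treated symmetrically. Choose an $n$-adic $b' \ge b$, equal to $b$ if $b$ is $n$-adic, with $h$ extended affinely and its image $[h(x_{m}), n^{k'}b'+c')$ still contained in $[0,1]$. This is possible since the extension fixes $b \le 1$. If $m=0$, the single piece $[a,b)$ is extended on both sides at once, and the same argument works. Finally, the value at $b$ itself is recovered from the continuous extension of the last transformation, which fixes $b$.

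\emph{Main obstacle.} The delicate step is the end pieces when $a$ or $b$ is not $n$-adic. There the extended intervals genuinely leave $[a,b]$, and the lemma asserts equality only after restricting to $[a,b]$. One must check two things: that the extended domains and images still lie in $[0,1]$, so that only standard $n$-adic intervals are used, and that the intercept $c$ is $n$-adic. The second is what makes Lemma \ref{lemma:subdivision} applicable, and it comes from the fact that $h$ preserves $n$-adic points. Note that when $a$ is irrational the equation $n^{k}a+c=a$ with $c \in \mathbb{Z}[1/n]$ forces $k=0$. This is consistent with the construction and causes no trouble.
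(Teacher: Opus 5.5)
Your proposal is correct and follows the paper's proof. Both extend $h$ affinely past the outermost breakpoints to $n$-adic points $s_0 \le a$ and $s_{\delta+1} \ge b$ whose images stay in $[0,1]$, and then apply Lemma \ref{lemma:subdivision} on each piece $[s_i,s_{i+1})$. You also spell out a detail the paper leaves implicit: the intercept of each end piece lies in $\mathbb{Z}[1/n]$, so the new endpoints have $n$-adic images and Lemma \ref{lemma:subdivision} genuinely applies.
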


\begin{proof}
This is entirely straightforward if $h$ has no singularities, so we assume that $h$ has a non-empty collection of singularities $a < s_{1} < s_{2} < \ldots < s_{\delta} < b$. Consider the (unique) piecewise linear extension $\hat{h}: \mathbb{R} \rightarrow \mathbb{R}$ of $h$ such that $\hat{h}$ and $h$ share the same set of singularities. By continuity of $\hat{h}$, we can find $s_{0} \in \mathbb{Z}[1/n] \cap [0,a]$
and $s_{\delta+1} \in \mathbb{Z}[1/n] \cap [b,1]$ such that 
$\hat{h}(s_{0}), \hat{h}(s_{\delta+1}) \in [0,1]$. 

We apply Lemma \ref{lemma:subdivision} to each interval $[s_{i},s_{i+1})$ ($i = 0, \ldots, \delta$), and thus find 
a sequence of standard $n$-adic transformations $\sigma_{1}, \ldots, \sigma_{\ell}$ such that $\gamma := \gamma(\sigma_{1}, \ldots, \sigma_{\ell}) =
\hat{h}_{\mid [s_{0},s_{\delta+1})}$. Clearly, $\gamma$ has the desired property.  
\end{proof}

\section{Local generation of the groups $G_{n,\ell}$}
\label{section:genlocal} 

Throughout this section, we write $\ell = (r_{1}, \ldots, r_{k})$ and we let
$A_{n}(r_{i})$ denote the $n$-ary automaton for $r_{i}$, as described in Section \ref{section:example}. We also let $T_{n}$ denote the infinite rooted $n$-ary tree.

\subsection{The labelled tree $T_{n,\ell}$}

\begin{definition} ($\omega$-accepting state) \label{definition:accept}
Let $\omega$ be a word that is accepted by $A_{n}(x)$. If $p$ is the path from the basepoint of $A_{n}(x)$ that is labelled by $\omega$, then we say that $\tau(p)$ is the \emph{$\omega$-accepting state} of $A_{n}(x)$.
\end{definition}

\begin{definition} (left and right failure) \label{definition:leftright}
Let $\omega = a_{1}\ldots a_{m}$ ($a_{i} \in \{ 0, \ldots, n-1 \}$).
If $\omega$ is not accepted by $A_{n}(x)$, then there is a (possibly empty) maximal proper prefix $\omega' = a_{1}\ldots a_{\beta}$  such that $\omega'$ is accepted by $A_{n}(x)$.
\begin{enumerate}
\item Assume $\omega$ is not accepted by $A_{n}(r_{1})$ and $\omega'$ is as above.
We say that $\omega$ \emph{left fails} if $a_{\beta + 1}$ is less than the label of the outgoing edge from the $\omega'$-accepting state of $A_{n}(r_{1})$;
\item Assume $\omega$ is not accepted by $A_{n}(r_{k})$ and $\omega'$ is as above.
We say that  $\omega$ \emph{right fails} if $a_{\beta + 1}$ is greater than the label of the outgoing edge from the $\omega'$-accepting state of $A_{n}(r_{k})$.
\end{enumerate} 
\end{definition}

\begin{lemma} \label{lemma:labellings}
(Interpretation of acceptance and failure)
Let $\omega$ be a node in $T_{n}$ and let $I_{\omega}$ be the standard $n$-adic interval whose address is $\omega$.
\begin{enumerate}
\item $\omega$ left fails if and only if, for every $x \in I_{\omega}$, $x < r_{1}$;
\item $\omega$ right fails if and only if, for every $x \in I_{\omega}$, $x \geq r_{k}$;
\item $\omega$ is accepted by $A_{n}(r_{i})$ ($i \in \{ 1, \ldots, k-1 \}$)
if and only if $r_{i} \in I_{\omega}$;
\item if no $n$-ary expansion of $r_{k}$ is terminating, then $\omega$ is accepted by $A_{n}(r_{k})$ if and only if $r_{k} \in I_{\omega}$;
\item if $r_{k}$ admits a terminating expansion, then $\omega$ is accepted by $A_{n}(r_{k})$ if and only if $r_{k}$ is in the interior of $I_{\omega}$ or $r_{k}$ is the right endpoint of $I_{\omega}$. (Here $b$ is the right endpoint of  $[a,b)$).
\end{enumerate}
\end{lemma}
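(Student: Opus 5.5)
The proof rests on one dictionary between addresses and expansions. The automaton $A_{n}(r_{i})$ accepts exactly the finite prefixes of the chosen $n$-ary expansion $.c_{1}c_{2}c_{3}\ldots$ of $r_{i}$ from Remark \ref{remark:nary}. Reading along the path from the basepoint traces out $c_{1}c_{2}\ldots$, and since the automaton is a path followed by a loop, or a ray, each vertex has exactly one outgoing edge. So $\omega$ is accepted if and only if $\omega \equiv c_{1}\ldots c_{|\omega|}$.

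Next we use the description of $I_{\omega}$ in terms of expansions. By definition, $I_{\omega}$ consists of the numbers having an expansion beginning with $\omega$, minus $.\omega\overline{n-1}$. Equivalently, $x \in I_{\omega}$ if and only if the expansion of $x$ that does not end in $\overline{n-1}$ begins with $\omega$. Parts (3)--(5) follow by comparing the chosen expansion of $r_{i}$ with this one.

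For $i<k$, condition (1) of Remark \ref{remark:nary} selects the expansion ending in $0$s when $r_{i}$ is $n$-adic. Otherwise the expansion is unique and not eventually $n-1$, by condition (3) and the fact that a non-$n$-adic number has a unique expansion. In either case the chosen expansion is the one not ending in $\overline{n-1}$, which gives (3). Part (4) is the same argument for $r_{k}$.

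For (5), $r_{k}$ is $n$-adic and its chosen expansion ends in $\overline{n-1}$. Its prefixes $\omega$ are exactly the addresses for which $r_{k}$ lies in the closure of $I_{\omega}$ but is not its left endpoint. The left endpoint $.\omega\overline{0}$ differs from $.\omega c\ldots$ unless every later digit vanishes, which cannot happen here. So $r_{k}$ is either interior to $I_{\omega}$ or equal to the right endpoint $.\omega\overline{n-1}$. Conversely, if $r_{k}$ is interior or the right endpoint, then the expansion ending in $n-1$s begins with $\omega$. The one delicate point is when $r_{k}$ is interior: its expansion ending in $0$s must begin with $\omega$, and the expansion ending in $\overline{n-1}$ agrees with it up to the last nonzero digit. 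That digit lies strictly inside the address range because $r_{k}$ is not the left endpoint.

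For (1) and (2) we compare lexicographically. Let $\omega'$ be the maximal accepted prefix, of length $\beta$, so that $\omega$ and the chosen expansion $c$ of $r_{1}$ first differ at position $\beta+1$. If $a_{\beta+1}<c_{\beta+1}$, then every $x \in I_{\omega}$ satisfies
\[ x \le .\omega'a_{\beta+1}\overline{n-1} \le .\omega'c_{\beta+1}\overline{0} \le r_{1}. \]
Moreover $x$ cannot equal $.\omega\overline{n-1}$, since that point is excluded from $I_{\omega}$, so $x<r_{1}$. If instead $a_{\beta+1}>c_{\beta+1}$, then the left endpoint of $I_{\omega}$ is at least $r_{1}$, so not every $x$ is below $r_{1}$.

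The converse of (1) is the main obstacle. Suppose all of $I_{\omega}$ lies below $r_{1}$. Then $\omega$ cannot be accepted, since by the second paragraph acceptance would put a point of $I_{\omega}$ at least as large as $r_{1}$ (namely $r_{1}$ itself, or points approaching it) inside $I_{\omega}$. Hence the first discrepancy must satisfy $a_{\beta+1}<c_{\beta+1}$.

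The dangerous case is when $r_{1}$ is the right endpoint of $I_{\omega}$ and its expansion is chosen to end in $0$s, for instance when $r_{1}=r_{1}$ with $k$ large. Here the expansion of $r_{1}$ equals $.\omega\overline{n-1}$ in value, but its $0$-ending expansion has a larger digit at the first place where the two expansions split. That place lies within $\omega$, so the discrepancy is still of left type. We must check this carefully, since it is exactly where the choice in Remark \ref{remark:nary}(1) matters.

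Part (2) is symmetric, using the expansion of $r_{k}$ that ends in $n-1$s, together with the half-open convention: $x \ge r_{k}$ includes the left endpoint case. Choosing the $\overline{n-1}$ expansion for $r_{k}$ means that an interval whose left endpoint is $r_{k}$ right fails, as it should.
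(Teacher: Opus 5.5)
Your argument is correct and is the elementary $n$-ary expansion argument that the paper leaves to the reader. Acceptance by $A_{n}(r_{i})$ means being a prefix of the chosen expansion, and $I_{\omega}$ consists of the points whose expansion not ending in $\overline{n-1}$ begins with $\omega$. Parts (1)--(2) then follow from a lexicographic comparison at the first discrepancy, with the $\overline{0}$ and $\overline{n-1}$ choices of Remark \ref{remark:nary} handling exactly the endpoint cases.

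Only cosmetic fixes are needed:
\begin{enumerate}
\item In the converse of (1), acceptance puts $r_{1}$ itself in $I_{\omega}$ by (3). The alternative ``points approaching it'' would give no contradiction and should be dropped.
\item In (5), the point is that the last nonzero digit of the $\overline{0}$-expansion sits at a position beyond $|\omega|$; ``strictly inside the address range'' should be reworded to say this.
\item The phrase ``$r_{1}=r_{1}$ with $k$ large'' is garbled.
\end{enumerate}
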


\begin{proof}
All parts of the lemma follow from elementary facts about the $n$-ary expansion of a real number. We leave the details to the reader.
\end{proof}

\begin{lemma} \label{lemma:stab}(Stabilizers  of the points $r_{i}$ in $S_{n}$)
Let $\sigma_{\omega_{1}}^{\omega_{2}} \in S_{n}$. 
\begin{enumerate}
\item If $i<k$ or if $i=k$ and $r_{k}$ has no terminating $n$-ary expansion, then  
$\sigma_{\omega_{1}}^{\omega_{2}}(r_{i}) = r_{i}$ if and only if $\omega_{1}$ and
$\omega_{2}$ are accepted by $A_{n}(r_{i})$, and have the same accepting states. 
\item If $r_{k}$ has a terminating $n$-ary expansion and neither $\omega_{1}$ nor $\omega_{2}$ right fail, then the unique continuous 
extension of $\sigma_{\omega_{1}}^{\omega_{2}}$ to the closure of $\omega_{1}$ fixes $r_{k}$ if and only if
$\omega_{1}$ and $\omega_{2}$ are accepted by $A_{n}(r_{k})$ and have the same accepting states.
\end{enumerate}
\end{lemma}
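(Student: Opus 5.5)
The plan is to reduce both parts to the prefix-replacement description of $\sigma_{\omega_{1}}^{\omega_{2}}$ in Lemma \ref{lemma:basic}(2) and the membership criterion of Lemma \ref{lemma:labellings}.

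First suppose $\sigma_{\omega_{1}}^{\omega_{2}}(r_{i}) = r_{i}$, with $r_{i}$ in the domain $I_{\omega_{1}}$. Then $r_{i} \in I_{\omega_{1}} \cap I_{\omega_{2}}$, so by Lemma \ref{lemma:labellings}(3),(4) both words are accepted by $A_{n}(r_{i})$. The $\omega$-accepting state is determined by the position reached along the chosen expansion $.\alpha\overline{\beta}$ of $r_{i}$. Concretely, reading $\omega$ leaves a tail $t(\omega)$ of the expansion, and this tail is the same for all $\omega$ ending at a given state. Writing $r_{i} = .\omega_{1} t(\omega_{1})$ and applying Lemma \ref{lemma:basic}(2) gives $\sigma_{\omega_{1}}^{\omega_{2}}(r_{i}) = .\omega_{2} t(\omega_{1})$. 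The hypothesis $r_{i}=.\omega_{2}t(\omega_{2})$ then yields two expansions of the same number with the same prefix $\omega_{2}$. Here the conventions of Remark \ref{remark:nary}(1) for $i<k$ (the expansion ending in $0$s), or the absence of any terminating expansion, make the expansion unique in the relevant sense, so $t(\omega_{1}) = t(\omega_{2})$ as infinite sequences.

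The converse is immediate. If both words are accepted with the same state $s$, then $t(\omega_{1})=t(\omega_{2})$. Hence $\sigma_{\omega_{1}}^{\omega_{2}}(r_{i}) = .\omega_{2}t(\omega_{2}) = r_{i}$. One caveat is needed: the tail must not be $\overline{n-1}$. For $i<k$ this holds because we chose the expansion ending in $0$s; for $r_{1}$ the tail could in principle be all $(n-1)$s only if $r_{1}$ were $n$-adic, and that is excluded by the same convention. The continuous extension handles the boundary case anyway.

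The main step is showing that equal tails force equal states, which is a minimality statement about the automaton. Suppose two distinct vertices $s \neq s'$ have the same tail, i.e.\ the same infinite labelled path leaves each of them. I plan to rule this out using Remark \ref{remark:minimal} and conditions (3),(4) of Remark \ref{remark:nary}. If both vertices lie on the loop, equal tails mean a cyclic shift of $\beta$ equals $\beta$, which is impossible because $\beta$ is primitive. If one vertex lies on the initial path, condition (4) shows that the last prefix vertex and the last loop vertex have different outgoing labels, and propagating backwards gives a contradiction. I expect this to be the main obstacle, along with the observation that the vertex equivalence used for labels, which only concerns $r_{1}$ and $r_{k}$, plays no role here since we speak of states and not labels. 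Irrational $r_{i}$ gives a ray, so distinct vertices have distinct tails trivially.

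For part (2), $r_{k}$ has the expansion ending in $\overline{n-1}$, so it is the right endpoint of the closure of $I_{\omega}$ for suitable $\omega$, and Lemma \ref{lemma:labellings}(5) replaces (3). Fixing $r_{k}$ by the continuous extension requires $r_{k} \in \overline{I_{\omega_{1}}}\cap\overline{I_{\omega_{2}}}$. The no-right-failure hypothesis excludes the case where $r_{k}$ is the left endpoint, so by Lemma \ref{lemma:labellings}(5) both words are accepted. The prefix-replacement argument then runs as before, using the final clause of Lemma \ref{lemma:basic}(2) for the extension to the endpoint $.\omega\overline{n-1}$. The tail comparison is done with the $\overline{n-1}$-terminating expansion, which is unique among expansions of that form, and the minimality argument for states is the same.
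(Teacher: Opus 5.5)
Your proof is correct, but it takes a different route from the paper's. The paper works through the fixed-point analysis of Lemma \ref{lemma:basic}(3)--(4), and it treats irrational and rational $r_{i}$ separately:
\begin{itemize}
\item For irrational $r_{i}$, a unique fixed point would be rational, so $\sigma_{\omega_{1}}^{\omega_{2}}$ must be the identity.
\item For rational $r_{i}$ with $\omega_{1}\not\equiv\omega_{2}$, the paper writes $\omega_{1}\equiv\omega_{2}\tau$ and identifies $r_{i}=.\omega_{2}\overline{\tau}$ as the unique fixed point. It then uses Remark \ref{remark:minimal} to place the $\omega_{2}$-accepting state on the loop, with $|\tau|$ a multiple of $|\beta|$.
\item The paper's converse again goes through Lemma \ref{lemma:basic}(3), after checking that $\tau\notin\{n-1\}^{+}$.
\end{itemize}

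You instead compute $\sigma_{\omega_{1}}^{\omega_{2}}(r_{i})=.\omega_{2}t(\omega_{1})$ directly by prefix replacement. Uniqueness of the chosen expansion then gives $t(\omega_{1})=t(\omega_{2})$, and everything reduces to injectivity of the map from states to tails. This has several advantages:
\begin{itemize}
\item It handles the rational case, the irrational case and part (2) uniformly.
\item The converse becomes a one-liner.
\item You never need to know in advance that one address is a prefix of the other.
\item It isolates exactly where conditions (3) and (4) of Remark \ref{remark:nary} are used.
\end{itemize}
The price is that you must prove the injectivity claim yourself. It is essentially the content of Remark \ref{remark:minimal}, which the paper states without proof.

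Your sketch of the injectivity claim is right, but ``propagating backwards'' is better phrased as follows. If $t(s)=t(s')$, then the successors of $s$ and $s'$ also have equal tails. Advancing both vertices step by step, they must eventually coincide, because once both are on the loop, primitivity of $\beta$ forces them to be equal. At the last step before they coincide, they are two distinct vertices with a common successor. The only such pair is the two predecessors of $v'$, and condition (4) says their outgoing labels differ, which is a contradiction. Similarly, in the irrational case distinct vertices of the ray have distinct tails not ``trivially'' but because equal tails would make the expansion eventually periodic, hence $r_{i}$ rational.
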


\begin{proof}
We first prove (1). Under the given hypotheses, the unique $n$-ary expansion 
of $r_{i}$ that does not end in an infinite sequence of $n-1$s is also the expansion that labels the automaton $A_{n}(r_{i})$. We consider two cases.

Assume first that $r_{i}$ is irrational. In this case, the automaton
$A_{n}(r_{i})$ is a labelled directed ray. Assume that $\sigma_{\omega_{1}}^{\omega_{2}}(r_{i}) = r_{i}$. By Lemma \ref{lemma:basic}(3), if $r_{i}$ were the unique fixed point of
$\sigma_{\omega_{1}}^{\omega_{2}}$, then $r_{i}$ would be rational. Thus, $\sigma_{\omega_{1}}^{\omega_{2}}$ has multiple fixed points.
Lemma \ref{lemma:basic}(4) now implies that $\omega_{1} \equiv \omega_{2}$. The words $\omega_{1}$ and $\omega_{2}$ are both accepted by $A_{n}(r_{i})$ (by Lemma \ref{lemma:labellings}(3-4)) and clearly they must have the same accepting state.

Conversely, if both $\omega_{1}$ and $\omega_{2}$ are accepted by $A_{n}(r_{i})$ and they have the same 
accepting state, then $\omega_{1} \equiv \omega_{2}$ (since $A_{n}(r_{i})$ is a ray) and one easily concludes that $\sigma_{\omega_{1}}^{\omega_{2}}$ fixes $r_{i}$. This handles the case in which $r_{i}$ is irrational.

Assume that $r_{i}$ is rational and $\sigma_{\omega_{1}}^{\omega_{2}}$ fixes $r_{i}$. It follows that $r_{i} \in I_{\omega_{1}} \cap I_{\omega_{2}}$, so 
$A_{n}(r_{i})$ accepts both $\omega_{1}$ and $\omega_{2}$ by Lemma \ref{lemma:labellings}(3-4). If $\omega_{1} \equiv \omega_{2}$, then $\omega_{1}$ and $\omega_{2}$ clearly have the same accepting states. It therefore suffices to consider the case in which
$\omega_{1} \not \equiv \omega_{2}$. The transformation $\sigma_{\omega_{1}}^{\omega_{2}}$ has a unique fixed point, by our current hypothesis and Lemma \ref{lemma:basic}(4). We conclude from
Lemma \ref{lemma:basic}(3) that either $\omega_{1} \equiv \omega_{2} \tau$ or $\omega_{2} \equiv \omega_{1} \tau$, where $\tau \in \{ 0, \ldots , n-1 \}^{+} - \{ n-1 \}^{+}$. We assume the former, the latter case being similar. Thus, $\omega_{1} \equiv \omega_{2} \tau$, and
\[ r_{i} = .\omega_{2}\overline{\tau}, \]
since $.\omega_{2}\overline{\tau}$ and $r_{i}$ are both fixed points
of $\sigma_{\omega_{1}}^{\omega_{2}}$.   
Let $\alpha$ be the word in $\{ 0, \ldots, n-1 \}^{\ast}$ that labels the path 
in $A_{n}(r_{i})$ from $v$ to $v'$, and let $\beta$ label the directed loop of
$A_{n}(r_{i})$, starting at $v'$. (Note that $v$ and $v'$ are the vertices described in Section \ref{section:example}.) We have
\[ .\alpha \overline{\beta} = .\omega_{2} \overline{\tau}, \]
where the digits after the decimal point agree place by place. It now follows from Remark \ref{remark:minimal} that the $\omega_{2}$-accepting state $v''$ in $A_{n}(r_{i})$ lies on the loop of $A_{n}(r_{i})$, and that $|\tau| = m |\beta|$, for some $m>0$. This implies that the $\omega_{1}$-accepting state $v'''$ is obtained from $v''$ by tracing the loop of  $A_{n}(r_{i})$ $m$ complete times, beginning at $v''$. Thus, we have $v''' =v''$, as desired.

Conversely, if $\omega_{1}$ and $\omega_{2}$ are both accepted by $A_{n}(r_{i})$ and have the same accepting state $v''$, then $\omega_{1} \equiv \omega_{2} \tau$ or $\omega_{2} \equiv \omega_{1} \tau$, where we may assume that $\tau$ is the label of a non-trivial directed loop in $A_{n}(r_{i})$ at $v''$. (The case in which $\tau$ is a trivial word is handled easily.) We assume, without loss of generality, that
$\omega_{1} \equiv \omega_{2} \tau$, where $\tau \in \{ 0, 1, 2, \ldots, n-1 \}^{+}$. Since $v''$ can be connected to itself by a non-trivial directed loop, it must be that $v''$ lies on the loop of $A_{n}(r_{i})$. If we let $\beta$ denote the label of the simple directed loop from $v'$ to itelf in $A_{n}(r_{i})$, then we must have that 
$\tau$ is a cyclic shift of some positive power $m$ of $\beta$. In particular, $\tau \not \in \{ n-1 \}^{+}$. Lemma \ref{lemma:basic}(3) now implies that $.\omega_{2} \overline{\tau}$ is the unique fixed point of $\sigma_{\omega_{1}}^{\omega_{2}}$. We let $\alpha$ denote the label of the directed arc in $A_{n}(r_{i})$ from $v$ to $v'$. Since the accepting state $v''$ of $\omega_{2}$ lies on the loop of
$A_{n}(r_{i})$, we find that $\omega_{2} \equiv \alpha \beta^{m} \gamma$ (for some $m \geq 0$), where $\gamma$ labels the directed arc from $v'$ to $v''$. Writing $\tau \equiv (\delta \gamma)^{m}$, where $\delta$ labels the directed arc from $v''$ to $v'$, and comparing $r_{i} = .\alpha \overline{\beta}$ with $.\omega_{2} \overline{\tau}$, we find that the latter is $r_{i}$, as required. 

Now (2). We note that $r_{k} = .\alpha \overline{n-1}$, where
$\alpha$ labels the path in $A_{n}(r_{k})$ from $v$ to $v'$ (and therefore does not end in $n-1$) and $v'$ is the only vertex 
on the loop of $A_{n}(r_{k})$. The unique continuous extension of $\sigma_{\omega_{1}}^{\omega_{2}}$ will be denoted by $\widehat{\sigma}_{\omega_{1}}^{\omega_{2}}$. We note that, by Lemma \ref{lemma:labellings}(2), $r_{k}$ cannot be the left endpoint of either $I_{\omega_{1}}$ or $I_{\omega_{2}}$. 

Assume that $\widehat{\sigma}_{\omega_{1}}^{\omega_{2}}$ fixes $r_{k}$. It follows from Lemma \ref{lemma:labellings}(5) that $\omega_{1}$ and $\omega_{2}$ are accepted by $A_{n}(r_{k})$. We assume, without loss of generality, that $\omega_{1} \equiv \omega_{2} \tau$ for some $\tau \in \{ 0, \ldots, n-1 \}^{+}$, the case $\omega_{1} \equiv \omega_{2}$ being trivial. 
A direct calculation shows that
$.\omega_{2}\overline{\tau}$ is a fixed point of $\widehat{\sigma}_{\omega_{1}}^{\omega_{2}}$. By Lemma \ref{lemma:basic}(4), if $\widehat{\sigma}_{\omega_{1}}^{\omega_{2}}$ had multiple fixed points, then we would have
$\omega_{1} \equiv \omega_{2}$, contrary to our hypothesis. Thus,
$.\alpha \overline{n-1} = .\omega_{2}\overline{\tau}$. By minimality of
$\alpha$, we have that $\omega_{2}$ is at least as long as $\alpha$. The same is true of $\omega_{1}$, since $|\omega_{1}| > |\omega_{2}|$. This means that both 
$\omega_{1}$ and $\omega_{2}$ have the accepting state $v'$. This proves the forward direction.

Conversely, suppose that $\omega_{1}$ and $\omega_{2}$ are accepted by 
$A_{n}(r_{k})$ and that both have the same accepting state. By Lemma \ref{lemma:labellings}(5), $r_{k}$ is in the domain of 
$\widehat{\sigma}_{\omega_{1}}^{\omega_{2}}$. Since the accepting states of
$\omega_{1}$ and $\omega_{2}$ are identical, we can assume (without loss of generality) that $\omega_{2} \equiv \omega_{1} \tau$ and find a directed loop $p$ from the accepting state $v'''$ of $\omega_{1}$ to the accepting state $v''$ of $\omega_{2}$. If $v'''$ is not on the loop 
of $A_{n}(r_{k})$, then this is only possible if $p$ is trivial. If $p$ is trivial, we have
$\omega_{1} \equiv \omega_{2}$, and so $\widehat{\sigma}_{\omega_{1}}^{\omega_{2}}(r_{k}) = r_{k}$, as desired (Lemma \ref{lemma:basic}(4)).
We therefore can assume that $v'''$ is on the loop of $A_{n}(r_{k})$ and that $p$ is non-trivial. It follows that 
$\tau \equiv (n-1)^{m}$ for some $m > 0$.
 By Lemma \ref{lemma:basic}(2), $\widehat{\sigma}_{\omega_{1}}^{\omega_{2}}$ acts by prefix replacement
on $.\omega_{1} \overline{\tau}$. A straightforward calculation then shows that
$\widehat{\sigma}_{\omega_{1}}^{\omega_{2}}$ fixes $.\omega_{1} \overline{\tau}$. However, by minimality of $\alpha$, we have
$\omega_{1} \equiv \alpha (n-1)^{j}$ for some $j$, so $r_{i} = .\alpha \overline{n-1}= 
\omega_{1} \overline{\tau}$ is fixed, as desired.  

\end{proof}

\begin{definition} \label{definition:Tnl}
(The labelled tree $T_{n,\ell}$; reduced vertices in $T_{n,\ell}$)
Let $T_{n}$ be the rooted ordered infinite $n$-ary tree. We will label certain nodes of $T_{n}$ in the following way.
Let $v \in T_{n}$ be a node, and let $\omega$ denote the address of $v$.
\begin{enumerate}
\item If $\omega$ is accepted by a unique automaton $A_{n}(r_{i})$, then we assign $v$ the label of the $\omega$-accepting state of $A_{n}(r_{i})$;
\item If $\omega$ is accepted by multiple automata $A_{n}(r_{i})$, then we assign $v$ no label;
\item If $\omega$ left or right fails, then we also assign $v$ no label;
\item In all other cases, $v$ is assigned the label $x$.
\end{enumerate}
Let $T_{n,\ell}$ denote the $n$-ary tree with the above labelling.

A node $v$ of $T_{n,\ell}$ is \emph{reduced} if it is labelled and at least two of its children are also labelled.
\end{definition}

\begin{lemma} \label{lemma:+} 
(Labelled isomorphism of subtrees in $T_{n,\ell}$)
If $v_{1}$ and $v_{2}$ are reduced nodes in $T_{n,\ell}$ with the same labels, then 
the subtrees rooted at $v_{1}$ and $v_{2}$ are isomorphic as labelled ordered rooted trees. The isomorphism and its inverse both carry reduced nodes to reduced nodes.
\end{lemma}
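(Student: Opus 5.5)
The plan is to show that the labelled subtree below a reduced node is determined entirely by its label. The rule is that a label records an automaton state, or the symbol $x$. So the first step is to understand, for a reduced node $v$ with address $\omega$, which automata can accept descendants of $\omega$.

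Since $v$ is labelled, $\omega$ is accepted by at most one automaton, and it neither left fails nor right fails. If $\omega$ is accepted by no automaton, then $v$ has label $x$. In that case $I_\omega$ lies strictly between consecutive points $r_i$, up to endpoint conventions, by Lemma \ref{lemma:labellings}. Hence no descendant $\omega\tau$ is accepted by any automaton, and none left or right fails. So every descendant is labelled $x$, and the subtree is the full $n$-ary tree with all labels $x$.

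Now suppose $v$ is labelled by a state $X$ of $A_n(r_i)$ accepting $\omega$, and let $q$ be the $\omega$-accepting state. For a descendant $\omega\tau$, the label is determined as follows.
\begin{enumerate}
\item If $\tau$ is readable from $q$, then $\omega\tau$ is accepted by $A_n(r_i)$. We must check it is accepted by no other automaton: any descendant accepted by both would force the ancestor $\omega$ to be accepted by both, which is excluded. So the label is the state reached from $q$ by reading $\tau$.
\item Otherwise, $\omega\tau$ fails in $A_n(r_i)$ at some first digit. If $i=1$ and that digit is smaller than the expected edge label, then $\omega\tau$ left fails and is unlabelled. If $i=k$ and the digit is larger, then $\omega\tau$ right fails and is unlabelled. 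In every other case the node gets label $x$, since no other automaton can accept it (its ancestor $\omega$ is not accepted by them) and it cannot fail on the other side.
\end{enumerate}
Thus the labelled subtree at $v$ is a function only of the state $q$ and the index $i$.

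The remaining issue, which I expect to be the main obstacle, is that labels record equivalence classes of states rather than states themselves. Two reduced nodes with the same label $X$ may correspond to different but equivalent states $q_1, q_2$ in $A_n(r_1)$, joined by a path of $(n-1)$s (and similarly for $A_n(r_k)$ with $0$s). Here reducedness is essential.

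Take $A_n(r_1)$. If $q$ has outgoing edge labelled $n-1$, then every child $\omega j$ with $j<n-1$ left fails, and the child $\omega(n-1)$ is the only labelled one. So $v$ would not be reduced. Hence a reduced node labelled by a state of $A_n(r_1)$ corresponds to the reduced vertex of its equivalence class, which is unique, and so $q$ is determined by $X$. For $A_n(r_k)$ the symmetric argument with $0$ and right failure applies. For $1<i<k$ classes are singletons. Labels of different automata are distinct, so $X$ also determines $i$.

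Putting these together, $q_1=q_2$ and the subtrees are identical by the description above. The natural isomorphism $\omega_1\tau\mapsto\omega_2\tau$ preserves labels. Reducedness of a node is defined purely from labels of the node and its children, so the isomorphism and its inverse carry reduced nodes to reduced nodes.
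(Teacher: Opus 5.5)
Your proof is correct and follows the same route as the paper: an $x$-labelled node has the full all-$x$ subtree beneath it, and otherwise the labelled subtree is a function of the accepting state, which you correctly show is determined by the label because reducedness forces that state to be the reduced vertex of its equivalence class (a step the paper asserts without justification). Your explicit description of descendant labels in terms of $(q,i)$ takes the place of the paper's brief appeal to induction, but it is the same argument.
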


\begin{proof}
Assume that $v_{1}$ and $v_{2}$ are reduced nodes in $T_{n,\ell}$ with the same labels. If these labels are $x$, then the subtrees rooted at $v_{1}$ and $v_{2}$ 
are infinite $n$-ary trees in which all nodes are labelled by $x$, and in which all nodes are reduced. The desired conclusion is then immediate.

Otherwise, the addresses of the nodes $v_{1}$ and $v_{2}$ have identical accepting states in one of the automata $A_{n}(r_{i})$. Since the labellings of the children and the status of the children as reduced or unreduced is purely a function of this accepting state, the desired conclusion follows by induction. 
\end{proof}

\begin{example} (The tree $T_{3,(5/26, 7/26, 5/6)}$)
We briefly discuss the labelled tree $T_{n,\ell}$, where $n$ and $\ell$ are as from Section \ref{section:example}.
Since $n=3$, $T_{n,\ell}$ is an infinite ternary tree. The root $\varepsilon$ is assigned no label, since $\varepsilon$ is accepted by all three automata.
The node $0$ is also assigned no label, since both $A_{3}(5/26)$ and $A_{3}(7/26)$ accept $0$. The node $00$ (and all of its children) are assigned no label because $00$ left fails. The nodes $01$ and $02$ are labelled $A$ and $E$, respectively, by (1) from Definition \ref{definition:Tnl}. The node $1$ is labelled $x$. The node $2$ is labelled $G$.

If a given node $v$ has label $X$, then we write $X \rightarrow a_{1}a_{2}a_{3}$ to indicate that the children of $v$ are to be labelled $a_{1}$, $a_{2}$, and $a_{3}$ (from left to right). If a given node is to be unlabelled, we will, for the duration of this subsection, label that node by $\ast$. With this understanding, we have the following rules for labelling the children of \emph{reduced} nodes:
\begin{itemize}
\item $A \rightarrow Bxx$;
\item $B \rightarrow \ast Ax$;
\item $C \rightarrow Dxx$;
\item $D \rightarrow xxE$;
\item $E \rightarrow xCx$;
\item $F \rightarrow xxG$;
\item $G \rightarrow xG\ast$;
\item $x \rightarrow xxx$.
\end{itemize}
We note further that any node labelled by $\ast$ according to the above rules represents a node that left or right fails, and thus all of its children are also labelled $\ast$. 

The above tells us how to label the children of all nodes, except for the unreduced nodes, all of which are labelled $A$ in this example. Such a node follows the rule $A \rightarrow \ast \ast A$, where again all descendants of the first two children are also labelled $\ast$.
\end{example}

\begin{remark} \label{remark:connect} (The connection between $T_{n,\ell}$ and $\mathcal{P}_{n,\ell}$)
The labellings of the tree $T_{n,\ell}$ have a direct connection to the semigroup presentation 
$\mathcal{P}_{n,\ell}$ described in Section \ref{section:example}. For reduced labelled nodes $v$
of $T_{n,\ell}$, children are labelled by the rule $X \rightarrow \omega(X)$, where $X$ is the label of $n$ and $\omega(X)$ is (with one caveat) the right side of the ordered pair $(X,\omega(X))$ that was computed in Subsection \ref{subsection:labels}.  The caveat is that 
$\omega(X)$ is always padded to a total length of $n$, either by adding suitable $\ast$s to the left (if $X$ labels a state in $A_{n}(r_{1})$) or to the right (if $X$ labels a state in $A_{n}(r_{k})$. If $X$ labels an unreduced node $v$ (i.e., if the address of $v$ is accepted by an unreduced vertex of $A_{n}(r_{i})$ labelled $X$), then the rule 
is $X \rightarrow (\ast)^{n-1}X$ or $X \rightarrow X(\ast)^{n-1}$ (respectively). 
\end{remark}

\subsection{The inverse semigroups $S_{n,\ell}$}

\begin{definition} \label{definition:Snl}
($S_{n,\ell}$)
Let $S'_{n,\ell}$ denote the set containing
$0$ and all standard $n$-adic transformations $\sigma: I_{\omega_{1}} \rightarrow I_{\omega_{2}}$ such that $\omega_{1}$ and $\omega_{2}$ are the addresses of reduced  labelled nodes in $T_{n,\ell}$ with identical labels. We define
\[ S_{n,\ell} = \{ h: I'_{1} \rightarrow I'_{2} \mid h = \sigma_{\mid [r_{1},r_{k}]}, \text{ for some } \sigma \in S'_{n,\ell} \} \cup \{ 0 \}. \]
\end{definition}

\begin{remark}
We will continue to use the notation $\sigma_{\omega_{1}}^{\omega_{2}}$ to denote members of $S_{n,\ell}$. When we do this, $\sigma_{\omega_{1}}^{\omega_{2}}$ refers to its restriction to $[r_{1},r_{k}]$. 
\end{remark}

\begin{remark}
If $r_{k}$ has a terminating $n$-ary expansion, then (and only then) 
it will fail to be in the domain of any $h \in S_{n,\ell}$. In such cases,
the locally determined group $\Gamma(S_{n,\ell})$ will be a group of homeomorphisms of $[r_{1},r_{k})$. This is entirely harmless, since each 
$\gamma \in \Gamma(S_{n,\ell})$ will extend uniquely to a homeomorphism of $[r_{1},r_{k}]$. We can safely ignore this point in what follows.
\end{remark}

\begin{proposition} \label{proposition:Snl}
(The inverse semigroups $S_{n,\ell}$)
The set $S_{n,\ell}$ is an inverse semigroup with respect to the 
operation of composition.
\end{proposition}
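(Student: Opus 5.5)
Closure under inverses is immediate: if $\sigma_{\omega_{1}}^{\omega_{2}} \in S'_{n,\ell}$, then $\omega_{1},\omega_{2}$ are addresses of reduced labelled nodes with the same label, so $\sigma_{\omega_{2}}^{\omega_{1}} = (\sigma_{\omega_{1}}^{\omega_{2}})^{-1}$ also lies in $S'_{n,\ell}$. Restriction to $[r_{1},r_{k}]$ commutes with inversion, so $S_{n,\ell}$ is closed under inverses. The work is in closure under composition. Take $\sigma_{\omega_{1}}^{\omega_{2}}$ and $\sigma_{\omega_{3}}^{\omega_{4}}$ in $S'_{n,\ell}$ and consider $\sigma_{\omega_{3}}^{\omega_{4}} \circ \sigma_{\omega_{1}}^{\omega_{2}}$. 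Two standard $n$-adic intervals are either nested or disjoint. If $I_{\omega_{2}} \cap I_{\omega_{3}} = \emptyset$, the composite is $0$ and we are done.

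Otherwise assume first $I_{\omega_{3}} \subseteq I_{\omega_{2}}$, so $\omega_{3} \equiv \omega_{2}\tau$. By Lemma \ref{lemma:basic}(2), prefix replacement gives
\[ \sigma_{\omega_{3}}^{\omega_{4}} \circ \sigma_{\omega_{1}}^{\omega_{2}} = \sigma_{\omega_{1}\tau}^{\omega_{4}}. \]
We must show $\omega_{1}\tau$ is the address of a reduced node with the same label as $\omega_{4}$ (equivalently, as $\omega_{3}=\omega_{2}\tau$). Use Lemma \ref{lemma:+}: $\omega_{1}$ and $\omega_{2}$ are reduced with equal labels, so there is a label- and reducedness-preserving isomorphism of the subtrees rooted there, carrying the node $\omega_{2}\tau$ to $\omega_{1}\tau$ (ordered-tree isomorphisms preserve relative addresses). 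Hence $\omega_{1}\tau$ is reduced, labelled like $\omega_{3}$, and therefore like $\omega_{4}$. The case $I_{\omega_{2}} \subseteq I_{\omega_{3}}$, with $\omega_{2} \equiv \omega_{3}\tau$, is symmetric: the composite is $\sigma_{\omega_{1}}^{\omega_{4}\tau}$, and Lemma \ref{lemma:+} applied to $\omega_{3},\omega_{4}$ shows $\omega_{4}\tau$ is reduced with the label of $\omega_{2}$, which equals that of $\omega_{1}$. Thus $S'_{n,\ell}$ is closed under composition.

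Finally, pass to $S_{n,\ell}$. We need $(\sigma'|_{[r_{1},r_{k}]}) \circ (\sigma|_{[r_{1},r_{k}]}) = (\sigma'\circ\sigma)|_{[r_{1},r_{k}]}$. This holds provided each $\sigma \in S'_{n,\ell}$ maps $I_{\omega_{1}} \cap [r_{1},r_{k}]$ onto $I_{\omega_{2}} \cap [r_{1},r_{k}]$, i.e., the restrictions are still partial bijections whose images lie in $[r_{1},r_{k}]$. I expect this to be the main obstacle. The plan is to argue that if a labelled node's interval contains $r_{1}$, it is accepted by $A_{n}(r_{1})$, and its label records the accepting state. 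By Lemma \ref{lemma:stab}, $\sigma$ then fixes $r_{1}$ (and similarly $r_{k}$, via the continuous extension in the terminating case). Since $\sigma$ is increasing, $x \geq r_{1}$ if and only if $\sigma(x) \geq r_{1}$. If neither interval contains $r_{1}$, both lie on the same side, because labelled nodes do not left fail (Lemma \ref{lemma:labellings}(1)).

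One subtlety is that equivalent non-identical vertices of $A_{n}(r_{1})$ share a label even though their accepting states differ. Here reducedness is used: only reduced vertices (outgoing edge not $n-1$) label reduced nodes. On an equivalence class, a path of $(n-1)$s shifts $r_{1}$'s position to the right endpoint side. I would check carefully, using Lemma \ref{lemma:labellings}, that a reduced node carrying a label from $A_{n}(r_{1})$ has a unique accepting state, so Lemma \ref{lemma:stab} applies. The analogous check with $0$s at $r_{k}$ completes the argument.
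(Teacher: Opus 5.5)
Your proof is correct. For $S'_{n,\ell}$ it is the paper's argument: inverses are immediate, and for composition one of $\omega_{2},\omega_{3}$ is a prefix of the other, so Lemma \ref{lemma:+} shows that $\omega_{1}\tau$ (respectively $\omega_{4}\tau$) is a reduced node carrying the right label. The paper simply asserts that it suffices to treat $S'_{n,\ell}$, and your extra check fills in that step correctly: each $\sigma_{\omega_{1}}^{\omega_{2}}$ maps $I_{\omega_{1}}\cap[r_{1},r_{k}]$ onto $I_{\omega_{2}}\cap[r_{1},r_{k}]$ (this is also what makes restriction commute with inversion, not just with composition); the key point is that a node whose $A_{n}(r_{1})$-accepting state has outgoing edge $n-1$ has only one labelled child (symmetrically with $0$ at $r_{k}$), so reduced nodes with equal labels share an accepting state and Lemma \ref{lemma:stab} applies.
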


\begin{proof}
It clearly suffices to check that $S'_{n,\ell}$ is an inverse semigroup.

We first show that $S'_{n,\ell}$ is closed under inverses. Let $\sigma_{\omega_{1}}^{\omega_{2}} \in S'_{n,\ell}$. This means that $\omega_{1}$ and $\omega_{2}$ are reduced labelled nodes, both of which have the same label. It follows directly that $\sigma_{\omega_{2}}^{\omega_{1}} \in S'_{n,\ell}$. Since 
\[ \left( \sigma_{\omega_{1}}^{\omega_{2}} \right)^{-1} = \sigma_{\omega_{2}}^{\omega_{1}}, \] $S'_{n,\ell}$ is closed under inverses. 


Let $\sigma_{\omega_{1}}^{\omega_{2}}$, $\sigma_{\omega_{3}}^{\omega_{4}} \in S'_{n,\ell}$. We suppose that the product $\sigma_{\omega_{3}}^{\omega_{4}} \sigma_{\omega_{1}}^{\omega_{2}}$ is not $0$. It follows that one of the strings $\omega_{2}$, $\omega_{3}$ is a prefix of the other. We assume that $\omega_{2}$ is a prefix of $\omega_{3}$, the reverse case being similar. Thus, $\omega_{3} \equiv \omega_{2} \tau$. Note that, by Lemma \ref{lemma:+}, $\omega_{2} \tau$ and $\omega_{1} \tau$ have the label, and $\omega_{1} \tau$ is a reduced node (since $\omega_{2}\tau$ is). We compute the product in $S_{n}$, noting that
$\sigma_{\omega_{1}\tau}^{\omega_{3}}$ is the restriction of $\sigma_{\omega_{1}}^{\omega_{2}}$
to $\omega_{1}\tau$:
\begin{align*}
\sigma_{\omega_{3}}^{\omega_{4}} \sigma_{\omega_{1}}^{\omega_{2}} &= \sigma_{\omega_{3}}^{\omega_{4}} \sigma_{\omega_{1}\tau}^{\omega_{3}} \\
&= \sigma_{\omega_{1}\tau}^{\omega_{4}}
\end{align*}
Since the nodes $\omega_{i}$, $i \in \{1, \ldots, 4 \}$, and $\omega_{1}\tau$ are reduced, and $\omega_{1}\tau$ and $\omega_{4}$ have the 
same label, 
$\sigma_{\omega_{3}}^{\omega_{4}} \sigma_{\omega_{1}}^{\omega_{2}} = \sigma_{\omega_{1}\tau}^{\omega_{4}} \in S'_{n,\ell}$.
\end{proof}

\subsection{Domains in $S_{n,\ell}$} 
\label{subsection:domains} 

The constructions of the spaces $\Delta(n,\ell)$ appeal to results
from \cite{FH} and \cite{Far}, which in turn use the domains of a given 
semigroup $S$ in an essential way. We therefore collect some properties 
of domains of $S_{n,\ell}$ here.

\begin{definition} (The set of non-empty domains)
\label{definition:domains}
We let 
\[ \mathcal{D}_{n,\ell}^{+} = \{ D \subseteq [0,1] \mid 
D \text{ is the domain of some } s \in S_{n, \ell} -\{0\} \}. \]
This is the \emph{set of non-empty domains}.
\end{definition}

\begin{lemma}(Domains and partitions)
\label{lemma:domandpar} 
If $v \in T_{n,\ell}$ is a labelled node with address $\omega$, then 
$v$ determines a unique domain $D \in \mathcal{D}_{n,\ell}^{+}$, namely,
$D = I_{\omega} \cap [r_{1},r_{k}]$. Conversely, 
if $D \in \mathcal{D}^{+}_{n,\ell}$, then $D$ determines a unique reduced labelled node, namely the domain of $id_{D} = \sigma_{D}^{D} \in S_{n,\ell}$.

If $v$ is any labelled node in $T_{n,\ell}$, then any partition of $v$ into domains is determined by a labelled cut set; i.e., a set $\{v_{1}, \ldots, v_{\alpha} \}$
such that 
\begin{enumerate}
\item each $v_{i}$ is labelled;
\item each descending ray issuing from $v$ that passes through only labelled vertices must pass through a unique $v_{i}$.
\end{enumerate}
\end{lemma}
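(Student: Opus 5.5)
We work directly from Definitions \ref{definition:Tnl} and \ref{definition:Snl}, using Lemma \ref{lemma:labellings} to interpret labels geometrically. The aim is to show that every labelled node gives a domain, that every domain is $I_\omega\cap[r_1,r_k]$ for a unique reduced node, and that partitions of a domain correspond to labelled cut sets.

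\textbf{Labelled nodes give domains.} Let $v$ be a labelled node with address $\omega$. We descend from $v$ along labelled children until we reach a reduced node $v'$ with address $\omega'$ and $I_{\omega'}\cap[r_1,r_k]=I_\omega\cap[r_1,r_k]$.
\begin{itemize}
\item If $v$ is unreduced, the labelling rules in Remark \ref{remark:connect} give exactly one labelled child, $\omega 0$ or $\omega(n-1)$; the other $n-1$ children left or right fail.
\item By Lemma \ref{lemma:labellings}(1),(2), intervals of failing nodes lie entirely outside $[r_1,r_k)$. So replacing $\omega$ by its unique labelled child does not change $I_\omega\cap[r_1,r_k]$.
\item The unreduced vertices of $A_n(r_1)$ and $A_n(r_k)$ are those on runs of $(n-1)$s or $0$s, which are finite by the conventions of Remark \ref{remark:nary}. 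Hence the descent stops after finitely many steps at a reduced node $v'$.
\end{itemize}
Then $id_{I_{\omega'}}=\sigma_{\omega'}^{\omega'}\in S'_{n,\ell}$, and its restriction has domain $D=I_\omega\cap[r_1,r_k]$, so $D\in\mathcal D^+_{n,\ell}$.

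\textbf{Domains give unique reduced nodes.} Every nonzero $s\in S_{n,\ell}$ has domain $I_{\omega_1}\cap[r_1,r_k]$ with $\omega_1$ reduced. Given $D$, we claim the reduced node realizing it is unique. Suppose two distinct reduced nodes give the same $D$.
\begin{itemize}
\item Their intervals meet, so one address is a prefix of the other; say $\omega_1\tau$ and $\omega_1$ with $\tau$ nonempty.
\item Since $\omega_1$ is reduced, at least two of its children are labelled. Each labelled child has nonempty intersection with $[r_1,r_k]$: $x$-labelled nodes lie inside $(r_1,r_k)$, and accepted nodes contain some $r_i$.
\item These intersections are disjoint, so $D$ for $\omega_1$ strictly contains $D$ for $\omega_1\tau$, a contradiction.
\end{itemize}
Hence the unique reduced node is the domain of $id_D$.

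\textbf{Partitions and cut sets.} Let $D_v$ be partitioned by domains $D_1,\dots,D_\alpha$, each corresponding by the previous step to a reduced labelled node $v_i$.
\begin{itemize}
\item Domains of $S_{n,\ell}$ are nested or disjoint, because standard $n$-adic intervals are. Hence each $v_i$ lies in the subtree below $v$.
\item Consider a descending ray from $v$ through labelled vertices. Its intervals shrink to a point of $[r_1,r_k]$ (or to $r_k$ itself when $r_k$ is terminating, handled by Lemma \ref{lemma:labellings}(5)). That point lies in some $D_i$, and then the ray passes through $v_i$.
\item Uniqueness of $v_i$ on the ray follows from disjointness of the $D_i$.
\end{itemize}
Conversely, a labelled cut set gives domains that are pairwise disjoint, since the cut vertices are incomparable, and they cover $D_v$.

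\textbf{Main obstacle.} The hard part is the boundary bookkeeping at $r_1$ and $r_k$ (half-open intervals, terminating expansions) and verifying that labelled rays really exhaust the points of $D_v$. For this I would invoke Lemma \ref{lemma:labellings} case by case.
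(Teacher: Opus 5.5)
Your structure is the paper's. Its proof is only a sketch: an unreduced labelled node has the same domain as its unique labelled child, and the map to domains is bijective on reduced nodes. Your strict-containment argument (a reduced node has two labelled children with disjoint nonempty traces on $[r_1,r_k]$) is a good way to make that bijection precise.

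\textbf{The limit-point argument fails.} In the cut-set part you pass to the limit point of a labelled ray and conclude that the ray passes through the $v_i$ whose domain contains that point. Standard $n$-adic intervals are half-open, so a ray whose address is eventually constantly $n-1$ converges from the left to the right endpoint of its intervals. That point need not lie in the domain of the cut vertex the ray actually meets. Take $n=2$, $\ell=(0,1)$, $v$ the node $0$, and the partition $D_v=[0,1/2)=I_{00}\sqcup I_{01}$. The labelled ray $0,00,001,0011,\ldots$ converges to $1/4\in I_{01}$, yet it passes through $00$ and not $01$. The ray $0,01,011,\ldots$ converges to $1/2\notin D_v$ at all. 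This has nothing to do with $r_1$ or $r_k$, so Lemma~\ref{lemma:labellings} will not repair it.

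\textbf{Repair.} Avoid the limit. Choose $N$ larger than the depths of all $v_i$, let $u_N$ be the vertex of the ray at depth $N$, and pick $q\in I_{u_N}\cap[r_1,r_k)$, which is nonempty because $u_N$ is labelled. Then $q\in D_v$, so $q\in D_i\subseteq I_{v_i}$ for some $i$. Since $I_{v_i}$ meets $I_{u_N}$ and $v_i$ has depth at most $N$, $v_i$ is an ancestor of (or equal to) $u_N$, and hence lies on the ray. Also, the claim that each $v_i$ lies below $v$, rather than being a proper ancestor, needs your strict-containment argument, not just nestedness.

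\textbf{The endpoint $r_k$.} Your bullet on failing children shows only that they miss $[r_1,r_k)$. What you obtain is invariance of $I_\omega\cap[r_1,r_k)$, not of $I_\omega\cap[r_1,r_k]$. The closed version is actually false when $r_k$ is terminating. For $n=2$, $\ell=(0,3/8)$ with $3/8=.010\overline{1}$, the node $01$ is labelled and unreduced, and $I_{01}\cap[0,3/8]=[1/4,3/8]$. Its labelled child gives $[1/4,3/8)$, and $[1/4,3/8]$ is not in $\mathcal{D}^{+}_{n,\ell}$. So in the terminating case the lemma must be read on $[r_1,r_k)$. That is the convention the paper adopts in its remark after the definition of $S_{n,\ell}$, where $\Gamma(S_{n,\ell})$ is treated as acting on $[r_1,r_k)$. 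State this explicitly rather than deferring it to a case analysis that cannot succeed for the closed interval.
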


\begin{proof}
This follows readily from our description of $S_{n,\ell}$ and from the fact that labelled nodes in $T_{n,\ell}$
correspond precisely to the standard $n$-adic intervals that meet $[r_{1}, r_{k}]$ in more than one point.

Note that the map from labelled nodes to domains is not one-to-one: each unreduced labelled node determines the same domain as its (unique) labelled child. The map is, however, bijective when restricted to reduced labelled nodes. 
\end{proof}

\subsection{Local generation of $G_{n,\ell}$}



\begin{theorem} (Local generation of $G_{n,\ell}$) \label{theorem:big}
\[ \Gamma(S_{n,\ell}) = G_{n,\ell}. \]
\end{theorem}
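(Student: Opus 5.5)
We prove the two inclusions separately. Throughout, $X = [r_{1},r_{k}]$, or $[r_{1},r_{k})$ when $r_{k}$ has a terminating expansion; by the remark following Definition \ref{definition:Snl} this difference is harmless.

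\emph{The inclusion $\Gamma(S_{n,\ell}) \subseteq G_{n,\ell}$.} Let $\gamma = \gamma(s_{1}, \ldots, s_{m}) \in \Gamma(S_{n,\ell})$. Each $s_{j}$ is the restriction to $X$ of a standard $n$-adic transformation $\sigma_{\omega_{1}}^{\omega_{2}}$. By Lemma \ref{lemma:basic}(1), such a map is affine with slope a power of $n$ and $n$-adic translation part. Hence $\gamma$ is piecewise linear, its slopes lie in $\langle n \rangle$, and it preserves $\mathbb{Z}[1/n] \cap X$. Its possible breakpoints are endpoints of the domains $I_{\omega} \cap X$, and these are $n$-adic except possibly at $r_{1}$ and $r_{k}$, which are endpoints of $X$ and so do not count. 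Since $\gamma$ is a homeomorphism of $X$, it fixes $r_{1}$ and $r_{k}$ automatically.

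It remains to show that $\gamma$ fixes each $r_{i}$ with $1<i<k$. The point $r_{i}$ lies in the domain of some $s_{j} = \sigma_{\omega_{1}}^{\omega_{2}}$, so $\omega_{1}$ is accepted by $A_{n}(r_{i})$ by Lemma \ref{lemma:labellings}(3). Because $\omega_{1}$ labels a node, it is accepted by $A_{n}(r_{i})$ alone, and its label is the $\omega_{1}$-accepting state there. The node $\omega_{2}$ carries the same label. Labels in distinct automata are distinct, and no vertex of $A_{n}(r_{i})$ is identified with any other vertex for $1<i<k$. Therefore $\omega_{2}$ is accepted by $A_{n}(r_{i})$ with the same accepting state, and Lemma \ref{lemma:stab}(1) gives $s_{j}(r_{i}) = r_{i}$.

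\emph{The inclusion $G_{n,\ell} \subseteq \Gamma(S_{n,\ell})$.} Let $h \in G_{n,\ell}$. Apply Lemma \ref{lemma:localgen} on each interval $[r_{i},r_{i+1}]$, or adapt its proof by extending $h$ past the endpoints. This writes $h$ piecewise as standard $n$-adic transformations $\sigma_{\omega_{1}}^{\omega_{2}}$ whose domains $I_{\omega_{1}}$ partition a neighbourhood of $X$. We then refine these pieces by repeatedly replacing $\sigma_{\omega}^{\omega'}$ with its $n$ restrictions $\sigma_{\omega j}^{\omega' j}$. The aim of the refinement is that every piece meeting $X$ in more than one point satisfies three conditions: $\omega_{1}$ and $\omega_{2}$ are reduced labelled nodes; pieces entirely outside $X$, which left or right fail, are discarded; and every $r_{i}$ with $1<i<k$ lies in the interior of a piece, or is a common endpoint of two pieces. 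Once this holds, we need the labels of $\omega_{1}$ and $\omega_{2}$ to agree.
\begin{itemize}
\item If $I_{\omega_{1}}$ contains no $r_{i}$ in its interior, both nodes are labelled $x$, or both are accepted by $A_{n}(r_{1})$ (respectively $A_{n}(r_{k})$). In the latter case the fact that $h$ preserves $r_{1}$ (respectively $r_{k}$) forces the accepting states to agree after the identifications by $(n-1)$-strings (respectively $0$-strings). This is the content of Lemma \ref{lemma:stab}, applied to the continuous extension.
\item If $r_{i}$ lies in $I_{\omega_{1}}$, then $h(r_{i}) = r_{i}$ together with Lemma \ref{lemma:stab} gives equal accepting states. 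For this we first subdivide so that $I_{\omega_{1}}$ contains no second point of $\ell$.
\end{itemize}

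\emph{The main obstacle.} The hardest step is guaranteeing that the subdivision process terminates with \emph{reduced} nodes carrying equal labels. This is most delicate at the endpoints, and for irrational $r_{i}$, where the automaton is a ray. We handle it as follows. Because $h$ is linear near $r_{i}$ and fixes $r_{i}$, a single piece around $r_{i}$ has the form $\sigma_{\omega_{1}}^{\omega_{2}}$ with both words accepted. For irrational $r_{i}$, Lemma \ref{lemma:basic}(4) forces slope $1$, so the piece is the identity near $r_{i}$ and we may take $\omega_{1} \equiv \omega_{2}$. Unreduced nodes are replaced by their unique labelled child, which determines the same domain by Lemma \ref{lemma:domandpar}. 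Lemma \ref{lemma:+} ensures that labels continue to match under further subdivision.
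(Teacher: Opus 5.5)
Your proposal is correct and follows the paper's proof. The inclusion $\Gamma(S_{n,\ell})\subseteq G_{n,\ell}$ goes via Lemma~\ref{lemma:labellings}(3) and Lemma~\ref{lemma:stab}(1), and the reverse inclusion via Lemma~\ref{lemma:localgen}, subdivision until the domain nodes are reduced and accepted by at most one automaton, and then Lemma~\ref{lemma:stab} for pieces meeting a point of $\ell$ versus the label-$x$ case otherwise. There are two small repairs. Your first bullet should read ``$I_{\omega_{1}}$ contains no $r_{i}$ with $1<i<k$'', since an $r_{i}$ at the left endpoint of $I_{\omega_{1}}$ makes $\omega_{1}$ accepted by $A_{n}(r_{i})$. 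Also, the claim that $\omega_{2}$ is then labelled $x$ too needs the paper's one-line argument that $r_{j}\in I_{\omega_{2}}$ would force $h^{-1}(r_{j})\neq r_{j}$.
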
 

\begin{proof}
Let $h \in \Gamma(S_{n,\ell})$. It follows that $h$ is a homeomorphism of the interval $[r_{1},r_{k}]$ and 
$h = (\sigma_{1}, \ldots, \sigma_{\beta})$, where $\sigma_{1}, \ldots, \sigma_{\beta}
\in S_{n,\ell}$. The derivatives of the $\sigma_{i}$ are in 
$\langle 1/n \rangle$ by definition. Singularities occur only at endpoints of the $\sigma_{i}$, all of which lie either at endpoints of $[r_{1},r_{k}]$ or at members of $\mathbb{Z}[1/n]$. Each $\sigma_{i}: I_{1} \rightarrow I_{2}$ is easily seen to determine a bijection between
$I_{1} \cap \mathbb{Z}[1/n]$ and $I_{2} \cap \mathbb{Z}[1/n]$. It thus suffices to show that $h$ fixes the points $r_{2}, \ldots, r_{k-1}$. Consider an arbitrary
$r_{i}$ $(2 \leq i \leq k-1)$. Since $r_{i}$ is in the domain 
of $h$, $r_{i}$ must be in the domain of $\sigma_{\gamma}$, for some $\gamma \in \{ 1, \ldots, \beta \}$. Suppose that $\sigma_{\gamma} = \sigma_{\omega_{1}}^{\omega_{2}}$. We note that $A_{n}(r_{i})$ must accept $\omega_{1}$ by Lemma \ref{lemma:labellings}(3).  Since $\omega_{1}$ and $\omega_{2}$ are both reduced and have the same label by the definition of $S_{n,\ell}$, Lemma \ref{lemma:stab}(1) implies that 
$\sigma_{\omega_{1}}^{\omega_{2}}(r_{i}) = r_{i}$. Since $r_{i}$ was arbitrary, 
$h \in G_{n,\ell}$.

Let $h \in G_{n,\ell}$. Since $h \in G_{n,[r_{1},r_{k}]}$, we can find $\sigma_{\alpha_{1}}^{\beta_{1}}, \ldots, \sigma_{\alpha_{d}}^{\beta_{d}} \in S_{n}$ such that the restriction of $\gamma(\sigma_{\alpha_{1}}^{\beta_{1}}, \ldots, \sigma_{\alpha_{d}}^{\beta_{d}})$ to 
$[r_{1},r_{k}]$ is $h$, by Lemma \ref{lemma:localgen}. After sufficient subdivision, we can further assume that the nodes $\alpha_{\delta}$ and $\beta_{\delta}$ are each accepted by at most one 
automaton $A_{n}(r_{i})$. 
By Lemma \ref{lemma:labellings}, we can further assume that each node
$\alpha_{1}, \ldots, \alpha_{d}$ is reduced and has a non-null label in $T_{n,\ell}$, by discarding the nodes (and corresponding transformations) whose labels left or right fail. It follows that the corresponding $\beta_{i}$ can neither left nor right fail, since $h$
preserves the interval $[r_{1},r_{k}]$. In particular, each $\beta_{i}$ necessarily has some non-null label 
in $T_{n,\ell}$. 

Let $i \in \{ 1, \ldots, d \}$ be arbitrary.
If $\sigma_{\alpha_{i}}^{\beta_{i}}$ fixes some 
$r_{j}$, then $\alpha_{i}$ and $\beta_{i}$ are accepted by $A_{n}(r_{j})$ and have the same accepting states, by Lemma \ref{lemma:stab}. Thus,
$\sigma_{\alpha_{i}}^{\beta_{i}} \in S_{n,\ell}$, as required. If $\sigma_{\alpha_{i}}^{\beta_{i}}$ does not fix any $r_{j}$, then none of the $r_{j}$
are in $I_{\alpha_{i}}$, and $r_{k}$ is neither in
$I_{\alpha_{i}}$ nor in its closure. 
It follows from this that $\alpha_{i}$ is not accepted by any of the automata $A_{n}(r_{j})$, and yet neither left nor right fails. Thus, $\alpha_{i}$ has the label
$x$. Now, if $r_{j} \in I_{\beta_{i}}$, then $h^{-1}(r_{j}) \neq r_{j}$, contrary to our hypothesis that $h \in G_{n,\ell}$. Thus,
$\beta_{i}$ also has the label $x$. Thus, $\sigma_{\alpha_{i}}^{\beta_{i}} \in S_{n,\ell}$, again as required.
\end{proof}

\section{CAT(0) cubical complex constructions for the groups $G_{n,\ell}$} \label{section:CAT(0)}

We are now ready to build a complex 
on which $G_{n,\ell}$ acts. Our construction begins with a simple expansion set $\mathcal{B}_{n,\ell}$.  Simple expansion sets \cite{Far2} are special types of the expansion sets defined in \cite{Far}. We will use the main result of  \cite{Far2} to prove that the simple expansion sets associated to $G_{n,\ell}$ give rise to CAT(0) cubical complexes $\Delta(n,\ell)$ (Theorem \ref{theorem:CAT(0)}).  We will then argue that
$G_{n,\ell}$ acts properly and freely by isometries on $\Delta(n, \ell)$ (Proposition \ref{proposition:action}). 

\subsection{The simple expansion set for $G_{n,\ell}$}

\begin{definition}  \cite{Far2} \label{definition:simpleexpansionsets}
(Simple expansion set)
A \emph{simple expansion set over $X$} is a $4$-tuple $(\mathcal{B}, X, supp, \mathcal{E})$, where $\mathcal{B}$ and $X$ are sets, and $supp: \mathcal{B} \rightarrow \mathcal{P}(X)$ and $\mathcal{E}$ are functions. 

For each $b \in \mathcal{B}$, $supp(b)$ is required to be a non-empty subset of $X$. The function $supp$ is called the \emph{support function}, and $supp(b)$ is the \emph{support} of $b$.  

A \emph{vertex} is a finite subset $v = \{ b_{1}, \ldots, b_{k}\} \subseteq \mathcal{B}$ such that $supp(b_{i}) \cap supp(b_{j}) = \emptyset$ when $i \neq j$. For each vertex $v$, we define 
\[ supp(v) = \bigcup_{\ell =1}^{k} supp(b_{\ell}); \quad \quad  
P(v) = \{ supp(b_{\ell}) \mid \ell \in \{ 1, \ldots, k \} \}.  \]
The collection $P(v)$ is the \emph{partition induced by $v$}. It is a partition of $supp(v)$.

 The function $\mathcal{E}$ assigns a set of vertices, denoted $\mathcal{E}(b)$, to each $b \in \mathcal{B}$. 
 The sets $\mathcal{E}(b)$ are required to satisfy the following three conditions:
\begin{enumerate}
\item $|\mathcal{E}(b)| \leq 2$;
\item $ \{ b \} \in \mathcal{E}(b)$;
\item If $|\mathcal{E}(b)| = 2$ and $v \in \mathcal{E}(b) - \{ \{ b \} \}$, 
then $P(v)$ is a proper refinement of $P(\{ b \})$. 
\end{enumerate}
A simple expansion set will usually be denoted by $\mathcal{B}$ (rather than by $(\mathcal{B},X,supp,\mathcal{E})$) for the sake of brevity. 
\end{definition}

\begin{definition} \label{definition:hatSnl} (The inverse semigroup $\widehat{S}_{n,\ell}$)
Let $h$ be a homeomorphism between two intervals, $I_{1}, I_{2} \subseteq [0,1]$. We say that $h$ is \emph{locally determined by $S_{n,\ell}$} if there are finitely many transformations $s_{1}, \ldots, s_{m} \in S_{n,\ell}$ with disjoint supports such that the domains of the $s_{i}$ partition the domain of $h$, and $h_{\mid Dom(s_{i})}
= s_{i}$ for $i = 1, \ldots, m$.

The set of all such homeomorphisms (for varying intervals $I_{1}$ and $I_{2}$, and varying $m$) is an inverse semigroup under composition if we include $0$. We denote this semigroup by $\widehat{S}_{n,\ell}$.
\end{definition}

\begin{definition} \cite{FH} \label{definition:B}(The set $\mathcal{B}_{n,\ell}$ and the function $supp$) Let
\[ \mathcal{A}_{n,\ell} = \{ (f,D) \mid f \in \widehat{S}_{n,\ell},
D \in \mathcal{D}^{+}_{n,\ell}, D \subseteq Dom(f) \}. \]
We define a relation $\sim$ on the set $\mathcal{A}_{n,\ell}$, writing
\[ (f_{1},D_{1}) \sim (f_{2},D_{2}) \]
if there is some bijection $s: D_{1} \rightarrow D_{2}$ such that $s \in S_{n,\ell}$ and $f_{2}s = f_{1}$ on $D_{1}$. The relation $\sim$ is an equivalence relation on $\mathcal{A}_{n,\ell}$. We denote the equivalence classes by $[f,D]$. The set of all such equivalence classes
is $\mathcal{B}_{n,\ell}$. 

We define $supp: \mathcal{B}_{n,\ell} \rightarrow \mathcal{P}([0,1])$
by the rule
\[ supp([f,D]) = f(D). \]
This is the \emph{support} of $[f,D]$. A straightforward check shows that the support is well-defined.
\end{definition}

\begin{definition} \label{definition:E}
(The function $\mathcal{E}$)
Let $[f,D] \in \mathcal{B}_{n,\ell}$. Since $D \in 
\mathcal{D}^{+}_{n,\ell}$, $D$ uniquely determines a reduced labelled node in $T_{n,\ell}$.
Let $D_{1}, \ldots, D_{\alpha}$ be the domains that correspond to labelled children of $D$ in $T_{n,\ell}$. We define 
\[ \mathcal{E}([f,D]) = \{ \{ [f,D] \}, \{ [f_{\mid D_{i}},D_{i}] \mid i = 1, \ldots, \alpha \} \}. \]  
For a given $b \in \mathcal{B}_{n,\ell}$, we let $E_{1}(b)$ denote the unique member of $\mathcal{E}(b) - \{ \{ b \} \}$ and let $E_{0}(b) = \{ b \}$.  
\end{definition}

\begin{remark} \label{remark:simply}
Thus, for a given $b = [f,D] \in \mathcal{B}$, $E_{0}(b)$ denotes the trivial expansion $\{ b \}$ and $E_{1}(b)$ denotes the expansion that splits $b$ into a number of pieces equal to the number of labelled children of the node determined by $D$. We have $\mathcal{E}(b) = \{ E_{0}(b), E_{1}(b) \}$. 
\end{remark}

\begin{proposition} \label{definition:Ewd}
($\mathcal{E}$ is well-defined)
The function $\mathcal{E}$ is well-defined.
\end{proposition}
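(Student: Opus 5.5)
Well-definedness of $\mathcal{E}$ comes down to one point. The value $\mathcal{E}([f,D])$ was defined using a chosen representative $(f,D)$, so we must show it does not depend on that choice. We must also check that $\mathcal{E}([f,D])$ satisfies the axioms of Definition \ref{definition:simpleexpansionsets}.

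\textbf{Independence of the representative.} Suppose $(f_{1},D_{1}) \sim (f_{2},D_{2})$, witnessed by $s \in S_{n,\ell}$ with $s: D_{1} \rightarrow D_{2}$ and $f_{2}s = f_{1}$ on $D_{1}$. By Lemma \ref{lemma:domandpar}, $D_{1}$ and $D_{2}$ correspond to reduced labelled nodes with addresses $\omega_{1}$ and $\omega_{2}$.
\begin{enumerate}
\item Since $s$ has domain $D_{1}$ and range $D_{2}$, we have $s = \sigma_{\omega_{1}}^{\omega_{2}}$ (restricted to $[r_{1},r_{k}]$). By the definition of $S_{n,\ell}$, the nodes $\omega_{1}$ and $\omega_{2}$ carry the same label.
\item By Lemma \ref{lemma:+}, the subtrees rooted at $\omega_{1}$ and $\omega_{2}$ are isomorphic as labelled ordered trees. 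Hence $\omega_{1}j$ is labelled exactly when $\omega_{2}j$ is, with the same label, and the isomorphism preserves reducedness.
\item Fix a labelled child $\omega_{1}j$, with domain $D_{1}^{j} = I_{\omega_{1}j} \cap [r_{1},r_{k}]$. Because $s$ acts by prefix replacement (Lemma \ref{lemma:basic}(2)), $s(D_{1}^{j}) = D_{2}^{j}$. The restriction $s_{\mid D_{1}^{j}}$ is the standard transformation $\sigma_{\omega_{1}j}^{\omega_{2}j}$ restricted to $[r_{1},r_{k}]$.
\end{enumerate}

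One subtlety arises in step (3) when a labelled child $\omega_{1}j$ is unreduced. In that case we instead pass to its unique reduced labelled descendant, which determines the same domain by Lemma \ref{lemma:domandpar}. By step (2), the corresponding descendant of $\omega_{2}j$ is also reduced and carries the same label. Either way, $s_{\mid D_{1}^{j}} \in S_{n,\ell}$. Moreover $f_{2\mid D_{2}^{j}} \circ s_{\mid D_{1}^{j}} = f_{1\mid D_{1}^{j}}$, so $[f_{1\mid D_{1}^{j}}, D_{1}^{j}] = [f_{2\mid D_{2}^{j}}, D_{2}^{j}]$. The children correspond bijectively, so the two expansions $E_{1}$ computed from the two representatives coincide.

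\textbf{The axioms.} We also check that each $[f_{\mid D_{i}}, D_{i}]$ is a legitimate element of $\mathcal{B}_{n,\ell}$: it is, because $f_{\mid D_{i}} \in \widehat{S}_{n,\ell}$ and $D_{i} \in \mathcal{D}^{+}_{n,\ell}$. The pieces have disjoint supports $f(D_{i})$, since the $D_{i}$ are disjoint and $f$ is injective. So $E_{1}([f,D])$ is a vertex. Its partition refines $\{f(D)\}$: the labelled children of $D$ cover $D$, because unlabelled children left or right fail and so meet $[r_{1},r_{k}]$ in at most a point. The refinement is proper, since a reduced node has at least two labelled children. Hence conditions (1)--(3) hold.

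The main obstacle is step (3) and its subtlety: confirming that the restriction of $s$ to each child lies in $S_{n,\ell}$, which requires reduced nodes with matching labels. This is exactly what Lemma \ref{lemma:+} supplies. There is also minor care needed with the endpoint $r_{k}$ when it has a terminating expansion, and this is handled by the remark that such endpoints can be ignored.
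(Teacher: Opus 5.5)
Your proof is correct and follows essentially the same route as the paper's: you take $s=\sigma_{\omega_{1}}^{\omega_{2}}$ between reduced nodes with equal labels, use Lemma \ref{lemma:+} to match the labelled children, and restrict $s$ to each child, passing to the first reduced labelled descendant when a child is unreduced. Your additional check of the simple-expansion-set axioms is not part of the paper's proof of this proposition (the paper records it as the immediate corollary that follows), but it is correct.
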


\begin{proof}
Suppose we are given $[f_{1}, I_{\alpha}], [f_{2}, I_{\beta}]
\in \mathcal{B}_{n,\ell}$ such that $[f_{1}, I_{\alpha}] = [f_{2}, I_{\beta}]$. Thus, there is some $s \in S_{n, \ell}$ such that 
$s: I_{\alpha} \rightarrow I_{\beta}$ is a bijection, and such that $f_{2}s = f_{1}$ on $I_{\alpha}$. By the proof of Lemma \ref{lemma:domandpar}, we may assume that $\alpha$ and $\beta$ are reduced labelled nodes in $T_{n,\ell}$ with the same labels and $s = \sigma_{\alpha}^{\beta}$.
It is already clear that $E_{0}([f_{1},I_{\alpha}]) = E_{0}([f_{2},I_{\beta}])$. 

Now we would like to argue that $E_{1}([f_{1},I_{\alpha}]) = E_{1}([f_{2},I_{\beta}])$. For this, it suffices to show that 
$[f_{1}, I_{\alpha i}] = [f_{2},I_{\beta i}]$ for each labelled
child $\alpha i$ of $\alpha$. Here it is straightforward to argue that $\sigma_{\alpha i}^{\beta i}$ gives the desired transformation. Indeed, $\sigma_{\alpha i}^{\beta i}$ is the restriction of $\sigma_{\alpha}^{\beta}$ to $\alpha i$ and the nodes $\alpha i$ and $\beta i$ have the same labelling, by Lemma \ref{lemma:+}. If $\alpha i$ and $\beta i$ are not reduced, then simply replace both with their unique labelled children until reduced nodes $\alpha \tau$ and $\beta \tau$ are reached, and then observe that $\sigma_{\alpha i}^{\beta i} = \sigma_{\alpha \tau}^{\beta \tau}$.
\end{proof}

\begin{corollary}
The $4$-tuple $(\mathcal{B}_{n,\ell},I,supp,\mathcal{E})$
is a simple expansion set, where $\mathcal{B}_{n,\ell}$, $supp$, and $\mathcal{E}$ are as in Definitions \ref{definition:B} and \ref{definition:E}, and $I$ is the interval $[r_{1},r_{k}]$ (or $[r_{1},r_{k})$). 
\qed
\end{corollary}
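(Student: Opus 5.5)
To prove the corollary, I will check the clauses of Definition \ref{definition:simpleexpansionsets} one at a time for the data of Definitions \ref{definition:B} and \ref{definition:E}. The set $X$ is $I=[r_{1},r_{k}]$ (or $[r_{1},r_{k})$ when $r_{k}$ has a terminating expansion, by the remark following Definition \ref{definition:Snl}).

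\emph{The support function.} For each $b=[f,D]$, the set $supp(b)=f(D)$ must be a non-empty subset of $X$ that does not depend on the representative. It is non-empty because $D\in\mathcal{D}^{+}_{n,\ell}$ is the domain of a non-zero element of $S_{n,\ell}$, and $f$ is injective on $D$. It lies in $X$ because every element of $S_{n,\ell}$ is a restriction to $[r_{1},r_{k}]$, and hence so is every element of $\widehat{S}_{n,\ell}$. It is independent of the representative: if $(f_{1},D_{1})\sim(f_{2},D_{2})$ via a bijection $s:D_{1}\to D_{2}$ in $S_{n,\ell}$, then $f_{1}(D_{1})=f_{2}s(D_{1})=f_{2}(D_{2})$.

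\emph{Conditions (1) and (2) on $\mathcal{E}$.} By Proposition \ref{definition:Ewd}, $\mathcal{E}$ is a well-defined function. Each element of $\mathcal{E}(b)$ must be a vertex, meaning a finite set of elements of $\mathcal{B}_{n,\ell}$ with pairwise disjoint supports.
\begin{itemize}
\item $\{b\}$ is trivially a vertex.
\item For $E_{1}(b)=\{[f_{\mid D_{i}},D_{i}]\}$, I need each $D_{i}$ to lie in $\mathcal{D}^{+}_{n,\ell}$. The $D_{i}$ correspond to labelled children, and by Lemma \ref{lemma:domandpar} every labelled node determines a domain; unreduced children are handled by passing down to reduced descendants, as in the proof of Proposition \ref{definition:Ewd}.
\item The supports $f(D_{i})$ are pairwise disjoint because the $D_{i}$ are disjoint (they come from distinct children) and $f$ is injective.
\end{itemize}
Condition (1) is immediate, since $\mathcal{E}(b)$ has at most the two elements $E_{0}(b)$ and $E_{1}(b)$. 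Condition (2) holds by construction.

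\emph{Condition (3).} This is the main point. Suppose $|\mathcal{E}(b)|=2$, so that $E_{1}(b)\neq\{b\}$. I must show that $P(E_{1}(b))$ is a proper refinement of $\{f(D)\}$.
\begin{itemize}
\item The sets $f(D_{i})$ cover $f(D)$. This is because the labelled children of a node have intervals whose union, intersected with $[r_{1},r_{k}]$, is all of $D$: the unlabelled children left or right fail, so by Lemma \ref{lemma:labellings} they contribute nothing inside $[r_{1},r_{k}]$.
\item The refinement is proper. The node determined by $D$ is reduced (Lemma \ref{lemma:domandpar}), so it has at least two labelled children. This gives $\alpha\ge 2$ non-empty disjoint pieces, and hence $\{f(D_{i})\}\neq\{f(D)\}$.
\end{itemize}
I expect the main obstacle to be handling the edge effects at $r_{1}$ and $r_{k}$ carefully: checking that each labelled child meets $[r_{1},r_{k}]$ in more than one point, so that $D_{i}$ is non-empty, and that failing children are exactly the ones disjoint from $[r_{1},r_{k}]$. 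Both facts follow from Lemma \ref{lemma:labellings}.
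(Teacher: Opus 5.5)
Your verification is correct and follows the paper's route. The paper gives no separate argument: it treats the corollary as an immediate check of the three axioms, using the well-definedness of $supp$ noted in Definition \ref{definition:B}, Proposition \ref{definition:Ewd}, and the fact that the node determined by $D$ is reduced, so that its at least two labelled children properly partition $D$.

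One small tightening is needed: $supp(b)\subseteq X$ does not follow merely from restricting domains to $[r_{1},r_{k}]$. You also need that each $\sigma_{\omega_{1}}^{\omega_{2}}\in S'_{n,\ell}$ carries $I_{\omega_{1}}\cap[r_{1},r_{k}]$ onto $I_{\omega_{2}}\cap[r_{1},r_{k}]$. This holds for two reasons:
\begin{enumerate}
\item intervals labelled $x$, or by states of $A_{n}(r_{i})$ with $1<i<k$, lie inside $(r_{1},r_{k})$;
\item transformations whose labels come from $A_{n}(r_{1})$ or $A_{n}(r_{k})$ fix $r_{1}$, respectively $r_{k}$, by Lemma \ref{lemma:stab} (for $r_{k}$, after continuous extension in the terminating case).
\end{enumerate}
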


\subsection{The CAT(0) cubical complex $\Delta(n,\ell)$}
\label{subsection:CAT(0)} 

\begin{definition} \cite{Far2} \label{definition:deltaB} (The simplicial complexes $\Delta_{\mathcal{B}}^{f}$ and $\Delta(n,\ell)$) Let $\mathcal{B}$ be an arbitrary simple expansion set.
A \emph{vertex} of $\Delta^{f}_{\mathcal{B}}$ is a finite subset
$\{ b_{1}, \ldots, b_{\alpha} \} \subseteq \mathcal{B}$ such that 
$\{ supp(b_{1}), \ldots, supp(b_{\alpha}) \}$ partitions 
the set $X$. 

For a given vertex $v_{1} = \{ b_{1}, \ldots, b_{\alpha} \}$ and a subset $v_{2} = 
\{ b_{i_{1}}, \ldots, b_{i_{\beta}} \} \subseteq v_{1}$ such that 
$|\mathcal{E}(b_{i_{\gamma}})| = 2$ for all $\gamma \in \{ 1, \ldots, \beta \}$, we define
\[ C(v_{1},v_{2}) = \{ \cup_{i=1}^{\alpha} E_{a_{i}}(b_{i}) \mid
 (a_{1}, a_{2}, \ldots, a_{\alpha}) \in \{ 0, 1 \}^{\alpha}; a_{i} = 0 \text{ if } b_{i} \not \in v_{2} \}. \]
Such a set $C(v_{1},v_{2})$ is a \emph{cube}. The members of 
$C(v_{1},v_{2})$ are in bijective correspondence with the $\alpha$-tuples
$(a_{1}, \ldots, a_{\alpha}) \in \{0, 1\}^{\alpha}$ such that $a_{i} = 0$ when $b_{i} \not \in v_{2}$. Thus, restricting to the coordinates $i$ such that $b_{i} \in v_{2}$, we obtain a natural bijection 
\[ C(v_{1},v_{2}) \rightarrow \{ 0, 1 \}^{\beta}. \]
We write $(x_{1}, \ldots, x_{\beta}) \leq (y_{1}, \ldots, y_{\beta})$ if
$x_{i} \leq y_{i}$ for $i = 1, \ldots, \beta$. This is clearly a partial order on $\{ 0, 1\}^{\beta}$. We give $[0,1]^{\beta}$ the simplicial complex structure in which vertices are members of $\{0,1\}^{\beta}$ and the simplices are ascending chains. 
The above bijection induces a simplicial complex structure on $C(v_{1},v_{2})$.

The simplices of $\Delta_{\mathcal{B}}^{f}$ are the simplices of $C(v_{1},v_{2})$,
as $v_{1}$ ranges over all vertices of $\Delta_{\mathcal{B}}^{f}$ and $v_{2}$ run over all subsets of $v_{1}$. We let 
$\mathcal{V}_{\mathcal{B}}$ and $\mathcal{S}_{\mathcal{B}}$ denote the vertices and simplices (respectively) of $\Delta_{\mathcal{B}}^{f}$. 

In the special case when $\mathcal{B} = \mathcal{B}_{n,\ell}$ we will write $\Delta(n,\ell)$ in place of $\Delta_{\mathcal{B}_{n,\ell}}^{f}$. 
\end{definition}

\begin{example} (An example of a cube in $\Delta(2,(0,1))$)
It may be useful at this point to consider a cube in a simple example. 
Assume that $n=2$ and $\ell = (0,1)$. In this case, the group 
$G_{n,\ell}$ is Thompson's group $F$. The labels in the tree $T_{n,\ell}$ are as follows. The root $\varepsilon$ is assigned no label. Each node with an address of the form $0^{m}$ ($m>0$) is reduced and is assigned the label $A$. Each node with an address of the form $1^{m}$ ($m>0$) is reduced and is assigned the label $B$. All other nodes have the label $x$. 

We let $v_{1} = \{ [id,I_{00}], [id,I_{01}], [id,I_{10}], [id,I_{11}] \}$ and $v_{2} = \{ [id,I_{01}], [id,I_{11}] \}$. In the current context, we can safely denote each $[id,I_{\omega}]$ by simply $\omega$. Thus,
$v_{1} = \{ 00, 01, 10, 11 \}$ and $v_{2} = \{ 01, 11 \}$. Note that 
\begin{align*}
E_{0}(01) &= \{ 01 \}; \\
E_{1}(01) &= \{ 010, 011 \}; \\
E_{0}(11) &= \{ 11 \}; \\
E_{1}(11) &= \{ 110, 111 \}.
\end{align*}  
The cube $C(v_{1},v_{2}) \subseteq \Delta(2,(0,1))$ is two-dimensional, and its corners are as follows:
\begin{itemize}
\item $\{ 00, 01, 10, 11 \}$;
\item $\{ 00, 010, 011, 10, 11 \}$;
\item $\{ 00, 01, 10, 110, 111 \}$;
\item $\{ 00, 010, 011, 10, 110, 111 \}$.
\end{itemize}
These corners correspond to the corners $(0,0)$, $(1,0)$, $(0,1)$, and $(1,1)$ (respectively) of the standard unit square $[0,1]^{2}$.
As a simplicial complex, $C(v_{1},v_{2})$ consists of two $2$-simplices meeting in the diagonal of $[0,1]^{2}$ from $(0,0)$ to $(1,1)$.
\end{example}

\begin{remark} (The complexes $\Delta_{\mathcal{B}}^{f}$ and
$\Delta_{\mathcal{B}}$)
The complex $\Delta_{\mathcal{B}}^{f}$ is called the \emph{full support complex} in \cite{Far} and \cite{Far2}, since $supp(v) = X$ for each vertex $v \in \Delta_{\mathcal{B}}^{f}$. A related complex, denoted by $\Delta_{\mathcal{B}}$, which allows vertices with arbitrary supports, is useful to define for technical reasons that do not directly concern us here. Notice that the definition 
of vertex in Definition \ref{definition:simpleexpansionsets} does not require $supp(v) = X$, and thus defines a vertex in $\Delta_{\mathcal{B}}$, but not necessarily one in $\Delta^{f}_{\mathcal{B}}$. 
\end{remark}

\begin{definition} \cite{Far2}
(The partial order $\preceq$)
Let $v', v'' \in \mathcal{V}_{\mathcal{B}}$. 
We write $v' \dot{\preceq} v''$ if $v'' \in C(v',v_{1})$, for some $v_{1} \subseteq v'$. Let $\preceq$ denote the transitive closure of $\dot{\preceq}$. The relation $\preceq$ is a partial order on the vertices of $\mathcal{V}_{\mathcal{B}}$.

We can describe $\preceq$ geometrically as follows. If $v', v'' \in \mathcal{V}_{\mathcal{B}}$, then $v' \preceq v''$ if and only if there is a sequence
\[ v' = v_{0}, v_{1}, v_{2}, v_{3}, \ldots, v_{m} = v'' \]
for $m \geq 0$ such that: (i) $v_{i}$ is connected to $v_{i+1}$ by an edge in $\Delta_{\mathcal{B}}^{f}$, and (ii) $|v_{i+1}| > |v_{i}|$, for $i=0, \ldots, m-1$. Thus, regarding the cardinality as a height function, we have $v' \preceq v''$ if and only if there is an ascending path from $v'$ to $v''$ in $\Delta_{\mathcal{B}}^{f}$.
\end{definition}

\begin{theorem} \cite{Far2} \label{theorem:CAT(0)}
(CAT(0) cubical complexes) Let $\mathcal{B}$ be an arbitrary simple expansion set. If the vertices of $\Delta_{\mathcal{B}}^{f}$ are 
a directed set with respect to $\preceq$, then $\Delta^{f}_{\mathcal{B}}$ is a CAT(0) cubical complex with respect to the cubes from Definition \ref{definition:deltaB}. \qed 
\end{theorem}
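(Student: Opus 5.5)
The plan is to apply Gromov's criterion. A cubical complex is CAT(0) exactly when (a) it is a genuine cubical complex, meaning cubes are embedded and any two cubes meet in a common face or not at all, (b) the link of every vertex is a flag simplicial complex, and (c) it is simply connected. I will in fact aim to show contractibility in (c), and only there will the hypothesis that $(\mathcal{V}_{\mathcal{B}},\preceq)$ is directed be used.

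For (a), I first note that each cube $C(v_{1},v_{2})$ is determined by its $\preceq$-minimal vertex $v_{1}$ and the set $v_{2}$ of expanded elements. Its faces are the cubes $C(v',v'')$ where $v'$ is obtained from $v_{1}$ by expanding some of $v_{2}$ and $v''$ is a subset of the remaining unexpanded members of $v_{2}$. Condition (3) of Definition \ref{definition:simpleexpansionsets} says that $E_{1}(b)$ properly refines $\{b\}$. Hence the coordinates $(a_{i})$ are recovered from a vertex of $C(v_{1},v_{2})$, and the map $C(v_{1},v_{2})\to\{0,1\}^{\beta}$ is injective, so cubes are embedded. To see that two cubes meet in a face, I would show that the set of vertices common to both cubes has a unique minimum and a unique maximum within each cube. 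The key observation is this: if a vertex $w$ lies in $C(v_{1},v_{2})$, then the elements of $v_{1}$ that are expanded to reach $w$ are exactly those $b\in v_{2}$ with $b\notin w$. This holds because each $b$ has only one nontrivial expansion, by $|\mathcal{E}(b)|\le 2$.

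For (b), I split the link of a vertex $v$ into an up-link and a down-link. Up-directions are the $b\in v$ with $|\mathcal{E}(b)|=2$. Any set of these spans a cube $C(v,\cdot)$, so the up-link is a full simplex. Down-directions are the subsets $E_{1}(b)\subseteq v$, one for each $b\in\mathcal{B}$ with this property. A family of such subsets spans a cube exactly when they are pairwise disjoint: the cube is then $C(v_{0},\{b_{j}\})$, where $v_{0}$ is $v$ with each $E_{1}(b_{j})$ collapsed to $b_{j}$. Pairwise disjointness is a pairwise condition, so the down-link is flag. Mixed up/down families always span a cube, since the expanded and collapsed elements are disjoint. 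So the link is the join of a simplex and a flag complex, and is therefore flag.

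For (c), let $K$ be a finite subcomplex. By directedness its vertices have a common upper bound $u$. I would show that the full subcomplex $\Delta_{\le u}$ spanned by vertices $w\preceq u$ is contractible. Given $w\preceq u$, simplicity forces each $b\in w$ to be refined toward $u$ along a unique chain of expansions. Consequently the interval $\{w' : w\preceq w'\preceq u\}$ is a product of chains of expansion trees, and the cubes in it behave like a product of intervals. I then collapse $\Delta_{\le u}$ onto $u$ by induction on $|u|-|w|$ over its vertices $w$: each cube with minimal vertex $w$ retracts toward its top corner. Hence $K$ is contained in a contractible subcomplex, all homotopy groups vanish, and the complex is contractible.

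I expect the main obstacle to be (c), specifically proving that $\Delta_{\le u}$ is contractible. There I must verify that every vertex $w\preceq u$ has a canonical ``greatest expansion toward $u$'' cube, and that these cubes are compatible enough to support a collapsing or discrete Morse argument. I would first try to construct a discrete Morse matching that pairs each cell with its join with the top coordinate direction toward $u$.
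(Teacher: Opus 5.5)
The paper gives no argument for this theorem; it is quoted from \cite{Far2}. Your outline therefore has to stand on its own, and step (b) contains a false claim. The link of $v$ is \emph{not} the join of the up-link and the down-link.

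Your sentence ``mixed up/down families always span a cube, since the expanded and collapsed elements are disjoint'' fails whenever an up-direction $c\in v$ lies in $E_{1}(b)$ for a down-direction $b$. In any cube $C(v_{1},v_{2})$ through $v$ in which $b$ is a down-direction, we have $b\in v_{1}$. Since $c$ has support strictly inside $supp(b)$, this forces $c\notin v_{1}$. But an up-direction $c$ at $v$ must lie in $v_{2}\subseteq v_{1}$. Concretely, in $\Delta(2,(0,1))$ at $v=\{0,10,11\}$, the directions ``expand $10$'' and ``collapse $\{10,11\}$ to $1$'' are not adjacent in the link.

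The correct statement is this. A set $U$ of up-directions and a set $D$ of down-directions span a simplex iff the sets $E_{1}(b)$, $b\in D$, are pairwise disjoint and $c\notin E_{1}(b)$ for all $c\in U$, $b\in D$. The cube is then $C\bigl((v-\bigcup_{b\in D}E_{1}(b))\cup D,\ U\cup D\bigr)$. It is determined by $v$ and its directions, so the link is a genuine simplicial complex. Since every condition is a condition on pairs, the link is still flag. Your conclusion survives, but the join argument has to be replaced by this one.

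The second problem is the criterion itself. Gromov's link condition, in the form found in \cite{BH}, is a theorem about finite-dimensional cube complexes. $\Delta^{f}_{\mathcal{B}}$ is typically infinite-dimensional: in $\Delta(2,(0,1))$ the vertex consisting of the $2^{m}$ intervals $I_{\omega}$ with $|\omega|=m$ is the bottom corner of a $2^{m}$-dimensional cube. So (a)--(c) yield the theorem only if you also invoke Leary's extension of Gromov's theorem to cubical complexes of arbitrary dimension, or supply a separate argument. As written, that step is missing.

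Your part (c), on the other hand, is sound and easier than you expect. If $w\preceq u$ and $w\neq u$, the expansion directions at $w$ that stay inside $\Delta_{\le u}$ are exactly the $b\in w-u$. Each such $b$ must be expanded on every ascending path to $u$, expansions of distinct elements commute, and expanding an element of $w\cap u$ leaves $\Delta_{\le u}$. Hence the part of the link of $w$ pointing toward $u$ is a nonempty full simplex. A Bestvina--Brady Morse argument with height $|u|-|w|$ then shows that $\Delta_{\le u}$ is contractible. The interval $\{w' : w\preceq w'\preceq u\}$ is a product of lattices of rooted subtrees of the expansion trees, not of chains, but you do not need that.

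Finally, in (a), a common minimum and maximum do not by themselves make the intersection of two cubes a face. You also need every vertex between them to be common to both cubes. Your key observation supplies this once the minimum and maximum agree in the two cubes.
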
 

\begin{theorem} (A CAT(0) construction for the groups $G_{n,\ell}$)
\label{theorem:CATSn,l} 
The complex $\Delta(n,\ell)$ is a CAT(0) cubical complex, where the cubes are as in 
Definition \ref{definition:deltaB}.
\end{theorem}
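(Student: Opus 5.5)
By Theorem \ref{theorem:CAT(0)}, it is enough to show that the vertex set $\mathcal{V}_{\mathcal{B}_{n,\ell}}$ of $\Delta(n,\ell)$ is directed with respect to $\preceq$: for any two vertices $v', v''$ there should be a vertex $w$ with $v' \preceq w$ and $v'' \preceq w$. The plan is to move from abstract vertices $\{[f_1,D_1],\ldots,[f_\alpha,D_\alpha]\}$ to concrete partitions of $I=[r_1,r_k]$ into pieces $f_i(D_i)$, and then to take common refinements.

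\emph{Normalize vertices.} Let $b=[f,D]\in\mathcal{B}_{n,\ell}$. Since $f\in\widehat{S}_{n,\ell}$, we can partition $D$ into domains $D'_1,\ldots,D'_m$ on each of which $f$ restricts to an element of $S_{n,\ell}$. By Lemma \ref{lemma:domandpar}, these $D'_j$ form a labelled cut set below the node of $D$. Any labelled cut set below a node is reached by finitely many successive one-step expansions $E_1$, since each expansion replaces a node by its labelled children, and unreduced nodes can be skipped because they have the same domain as their unique labelled child. Hence $\{b\}\preceq\{[f|_{D'_j},D'_j]\}_j$, and each new element equals $[\sigma_j, D'_j]$ with $\sigma_j\in S_{n,\ell}$. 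Using the equivalence $\sim$ with $s=\sigma_j^{-1}$, we get $[\sigma_j,D'_j]=[\mathrm{id},\sigma_j(D'_j)]$. Doing this for every element, and noting that expansions of different elements of a vertex commute (they are coordinates of a cube), we find that every vertex $v$ satisfies $v\preceq v_0$ for some vertex $v_0$ all of whose elements have the form $[\mathrm{id},D]$ with $D\in\mathcal{D}^+_{n,\ell}$. The supports of these elements partition $I$, so $v_0$ corresponds to a labelled cut set $C_0$ of the whole tree $T_{n,\ell}$, that is, a cut set of labelled descending rays from the root.

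\emph{Refinement.} For $w=\{[\mathrm{id},D]:D\in C\}$ and any labelled cut set $C'$ lying below $C$, we again have $w\preceq\{[\mathrm{id},D']:D'\in C'\}$. The reason is the same as before: expanding an element $[\mathrm{id},D]$ gives $[\mathrm{id}|_{D_i},D_i]$ over the labelled children $D_i$, and iterating reaches any labelled cut set below $D$.

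\emph{Common refinement.} Given two labelled cut sets $C_0$ and $C_0'$ of $T_{n,\ell}$, both finite, take $C''$ to consist of the deeper of each comparable pair of nodes. Equivalently, $C''$ is the set of standard $n$-adic intervals in the common refinement of the two partitions of $I$, and it is again a labelled cut set. Then $v'\preceq v_0\preceq w''$ and $v''\preceq v_0'\preceq w''$, where $w''$ is the vertex built from $C''$, and Theorem \ref{theorem:CAT(0)} applies.

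\emph{Main obstacle.} The delicate step is the normalization. There I must check that being ``reached by expansions'' really produces a chain of edges with $|v_{i+1}|>|v_i|$. This holds because $E_1$ of a reduced node has at least two labelled children, which is exactly the definition of reduced. I must also check that the identification $[\sigma,D]=[\mathrm{id},\sigma(D)]$ is legitimate, which needs $\sigma(D)\in\mathcal{D}^+_{n,\ell}$; this is true since $\sigma=\sigma_{\omega_1}^{\omega_2}$ maps the domain of $\omega_1$ onto the domain of the reduced node $\omega_2$. A further point to handle is that the cut sets of any two vertices are finite and jointly refinable; this follows from the correspondence between labelled nodes and the intervals $I_\omega$ meeting $I$ in more than one point.
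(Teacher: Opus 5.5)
Your proposal is correct and is essentially the paper's proof: reduce to directedness via Theorem \ref{theorem:CAT(0)}, expand each vertex until every member has the form $[\mathrm{id},D]$ (using Lemma \ref{lemma:domandpar} and $[s,D']=[\mathrm{id},s(D')]$), and then pass to the common refinement of the two resulting partitions of $[r_1,r_k]$ into domains. You make explicit some steps the paper leaves tacit, such as reaching a cut set by successive expansions and the nested-or-disjoint property of the pieces; the only slip is cosmetic: the root of $T_{n,\ell}$ is unlabelled, so ``a cut set of labelled descending rays from the root'' should be read as a set of labelled nodes whose domains partition $[r_1,r_k]$.
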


\begin{proof}
We first claim that, if $v \in \mathcal{V}_{\mathcal{B}}$, then $v \preceq v'$ for some $v'$
of the form 
\[ \{ [id_{D_{1}},D_{1}], \ldots, [id_{D_{\alpha}},D_{\alpha}] \}. \]
Assume that $v = \{ [f_{1},D_{1}], \ldots, [f_{d},D_{d}] \} \in \mathcal{V}_{\mathcal{B}}$.
Consider $[f_{1},D_{1}]$. We can assume that
$Dom(f_{1}) = D_{1}$, so $f_{1}$ is the union of
$s_{1}, \ldots, s_{\delta} \in S_{n,\ell}$. It follows that
$D_{1}$ is partitioned by the domains $D'_{1}, \ldots, D'_{\delta}$ of the $s_{i}$; thus, the domains of
the $s_{i}$ represent a cut set in the tree $T_{n,\ell}$ below the node $D_{1}$ (Lemma \ref{lemma:domandpar}). We can therefore perform a sequence of expansions at $\{ [f_{1},D_{1}] \}$, eventually resulting in the collection
\[ \{ [f_{1}, D'_{1}], \ldots, [f_{1}, D'_{\delta}] \} = \{ [s_{1},D'_{1}], \ldots, [s_{\delta},D'_{\delta}] \}. \]
Note that $[s_{i},D'_{i}] = [id, s_{i}(D'_{i})]$ by the definition of the equivalence relation on $\mathcal{B}_{n,\ell}$. Since $s_{i}(D'_{i})$ is a domain,
we have
\[ \{ [f_{1},D'_{1}], \ldots, [f_{1},D'_{\delta}] \} = \{ [id,D''_{1}], \ldots, [id,D''_{\delta}] \} = v'. \]
The expansions from $\{ [f_{1},D_{1}] \}$ to $v'$ show that
\[ \{ [f_{1},D_{1}], \ldots, [f_{d},D_{d}] \} \preceq v' \cup \{ [f_{2}, D_{2}], \ldots, [f_{d},D_{d}] \}. \]
We can then repeat this procedure for the remaining $[f_{i},D_{i}]$, proving the claim.

Now suppose that $v_{1}, v_{2} \in \mathcal{V}_{\mathcal{B}}$. We can replace $v_{i}$ with $v'_{i}$ ($i=1,2$), where $v_{i} \preceq v'_{i}$ and the members of
$v'_{i}$ all take the form $[id,D]$, where $D \in \mathcal{D}_{n,\ell}$. Thus, the vertices $v'_{1}$ and $v'_{2}$ represent partitions of $[r_{1},r_{k}]$ (or $[r_{1},r_{k})$) by members of $\mathcal{D}_{n,\ell}$. These partitions have a common refinement; the corresponding vertex $v''$ is such that
$v_{i} \preceq v'_{i} \preceq v''$ for $i=1,2$, completing the proof.
\end{proof}

\subsection{Orbits and Stabilizers} \label{subsection:orbstab} 

\begin{definition} \label{definition:domaintype}
(Domain type; types of cubes) Let  
$D_{1}, D_{2} \in \mathcal{D}^{+}_{n,\ell}$. 
We write $D_{1} \approx D_{2}$ if there is $s \in S_{n,\ell}$
such that $Dom(s) = D_{1}$ and $Im(s) = D_{2}$. 
The relation $\approx$ is an equivalence relation on $\mathcal{D}_{n,\ell}$. The 
equivalence classes are called \emph{domain types} and two members of the same equivalence class are said to have the \emph{same domain type}.
We extend $\approx$ to pairs $[f,D] \in \mathcal{B}$, letting
$[f_{1},D_{1}] \approx [f_{2},D_{2}]$ exactly if $D_{1} \approx D_{2}$. 

Two cubes $C(v_{1},v_{2})$, $C(v'_{1},v'_{2})$ have the same \emph{type} if $|v_{1}| = |v'_{1}|$ and the left-right order-preserving bijection $\phi: v_{1} \rightarrow v'_{1}$ is such that
\begin{enumerate}
\item $\phi$ restricts to a bijection from $v_{2}$ to $v'_{2}$, and
\item $b \approx \phi(b)$, for each $b \in v_{1}$.
\end{enumerate}
\end{definition}

\begin{definition} \label{definition:typeseq}
(The function $\mathcal{L}$; the type sequence of a cube)
Each $D \in \mathcal{D}^{+}_{n,\ell}$ determines a unique labelled reduced node $v$ of $T_{n,\ell}$ by Lemma \ref{lemma:domandpar}. We define 
$\mathcal{L}: \mathcal{D}^{+}_{n,\ell} \rightarrow \Sigma_{n,\ell}$ by letting $\mathcal{L}(D)$ be the label of the node $v$. We similarly let $\mathcal{L}([f,D]) = \mathcal{L}(D)$, for each $[f,D] \in \mathcal{B}$. 

Let $C(v_{1},v_{2})$ be a cube and assume $v_{1} = \{ b_{1}, \ldots, b_{k} \}$, where the $b_{i}$ are listed in left-to-right order. The \emph{type sequence} of
$C(v_{1},v_{2})$ is the pair
\[ (\mathcal{L}(b_{1}) \mathcal{L}(b_{2}) \ldots \mathcal{L}(b_{k}), \chi(v_{2}) ), \]
where $\chi(v_{2}) = \{ j \in \mathbb{N} \mid b_{j} \in v_{2}\}$.
\end{definition}

\begin{remark}
We think of the pair $(\omega, \chi)$ from Definition \ref{definition:typeseq} as a sequence in the following way. Suppose $\omega = AxxBxxxC$ and $\chi = \{ 2, 5, 8 \}$. We use a notational device, such as parentheses, to indicate the members of $\chi$:
\[ (AxxBxxxC, \{ 2, 5, 8 \}) = A(x)xB(x)xx(C). \]
\end{remark}

\begin{proposition} \label{proposition:action} (The action of $G_{n,\ell}$ on $\Delta(n,\ell)$) The function $\cdot : G_{n,\ell} \times \mathcal{B} \rightarrow \mathcal{B}$ defined by $g \cdot [f,D] = [gf,D]$ is a group action, which extends to an action of $G_{n,\ell}$ on $\Delta(n,\ell)$. The latter action is free and cube-permuting. Two cubes are in the same orbit if and only if they have the same type sequence.
\end{proposition}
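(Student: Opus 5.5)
We check the parts of Proposition \ref{proposition:action} in the order they are stated.

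\emph{Well-definedness and the group action.} First, $g\cdot[f,D]=[gf,D]$ is well defined. Since $g\in G_{n,\ell}=\Gamma(S_{n,\ell})$ by Theorem \ref{theorem:big}, $g$ is locally determined by $S_{n,\ell}$. Hence the restriction of $g$ to the interval $f(Dom(f))$ lies in $\widehat{S}_{n,\ell}$, after subdividing the pieces of $g$ and of $f$ to common standard $n$-adic pieces, which reduced labelled nodes allow by Lemma \ref{lemma:domandpar}. Therefore $gf\in\widehat{S}_{n,\ell}$ and $(gf,D)\in\mathcal{A}_{n,\ell}$. If $(f_1,D_1)\sim(f_2,D_2)$ via $s$ with $f_2s=f_1$, then $(gf_2)s=gf_1$, so the class of $(gf,D)$ does not depend on the representative. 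The identities $1\cdot b=b$ and $(gh)\cdot b=g\cdot(h\cdot b)$ are immediate.

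\emph{Extension to the complex.} We have $supp(g\cdot b)=g(supp(b))$. So $g$ carries a vertex (a finite set whose supports partition $I$) to a vertex. We also have $g\cdot E_1([f,D])=\{[gf_{\mid D_i},D_i]\}=E_1(g\cdot[f,D])$, because $\mathcal{E}$ depends only on $D$. Hence $g\cdot C(v_1,v_2)=C(gv_1,gv_2)$, preserving the coordinate order and the partial order on $\{0,1\}^\beta$. The action is therefore simplicial and cube-permuting.

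\emph{Freeness.} Suppose $g$ fixes a point of $\Delta(n,\ell)$. Then $g$ fixes the vertices of the minimal simplex containing it, because $g$ preserves cardinality of vertices, which is the height along chains. So suppose $g\cdot v=v$ for a vertex $v$. Since $g$ is increasing, it preserves the left-to-right order of the supports. It therefore fixes each $b=[f,D]\in v$, so $[gf,D]=[f,D]$ and there is some $s\in S_{n,\ell}$ with $Dom(s)=D$ and $gfs=f$ on $D$. Then $s=f^{-1}g^{-1}f$ restricted to $D$ maps $D$ onto itself. Every nonzero element of $S_{n,\ell}$ is a restriction of some $\sigma_{\omega_1}^{\omega_2}$. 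If $\sigma_{\omega_1}^{\omega_2}$ maps $I_{\omega_1}\cap[r_1,r_k]$ onto itself, then $\omega_1=\omega_2$ by Lemma \ref{lemma:domandpar}, using the bijection of domains with reduced nodes. Hence $s=id_D$ and $g=id$ on $f(D)$. These sets $f(D)$ partition $I$, so $g=1$. Note that non-identity affine self-maps of an interval with slope $n^m$ cannot preserve it, which gives the same conclusion.

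\emph{Orbits.} If two cubes lie in one orbit, their type sequences agree, since $\mathcal{L}([gf,D])=\mathcal{L}(D)$ and the order and $\chi$ are preserved. This direction is immediate.

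The converse is the main step. Suppose $C(v_1,v_2)$ and $C(v_1',v_2')$ have the same type sequence, with $v_1=\{[f_i,D_i]\}$ and $v_1'=\{[f_i',D_i']\}$ listed left to right. We have $\mathcal{L}(D_i)=\mathcal{L}(D_i')$. By Lemma \ref{lemma:+} the reduced nodes of $D_i$ and $D_i'$ carry the same label, so $\sigma_i:=\sigma_{D_i}^{D_i'}\in S_{n,\ell}$. Define $g$ piecewise by $g=f_i'\sigma_i f_i^{-1}$ on $f_i(D_i)$. The pieces $f_i(D_i)$ partition $I$ in increasing order, and so do the pieces $f_i'(D_i')$. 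Hence $g$ is an increasing bijection of $I$ that is locally determined by $S_{n,\ell}$, after decomposing each $f_i$ and $f_i'$ into elements of $S_{n,\ell}$ and composing within the inverse semigroup (Proposition \ref{proposition:Snl}). It is continuous because it is a monotone bijection. So $g\in\Gamma(S_{n,\ell})=G_{n,\ell}$.

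Finally, $g\cdot[f_i,D_i]=[f_i'\sigma_i,D_i]=[f_i',D_i']$ via $s=\sigma_i$. Thus $gv_1=v_1'$, and because the $\chi$'s agree, $gv_2=v_2'$, so $g$ carries the first cube to the second.

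The main obstacle is the bookkeeping showing that compositions such as $f_i'\sigma_if_i^{-1}$ really decompose into elements of $S_{n,\ell}$ with reduced, equally labelled source and target nodes. This uses Lemma \ref{lemma:+}, so that subdividing a transformation between equally labelled reduced nodes yields pieces between equally labelled nodes. It also uses Lemma \ref{lemma:domandpar}, to pass from unreduced to reduced nodes.
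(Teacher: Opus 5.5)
Your proposal is correct and follows essentially the same route as the paper. Freeness comes from a fixed vertex of minimal height, together with the fact that an element of $S_{n,\ell}$ mapping a domain $D$ onto itself is $id_{D}$. The orbit statement comes from the piecewise map $f_i'\sigma_i f_i^{-1}$ on $f_i(D_i)$. You also supply details the paper leaves implicit: well-definedness of the action, why the stabilizing $s$ is the identity, and why the constructed $g$ lies in $\Gamma(S_{n,\ell}) = G_{n,\ell}$. Reducing to a fixed vertex via simplices rather than cubes is an inessential variation.
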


\begin{proof}
The proof that $\cdot$ is a group action of $G_{n,\ell}$ on $\Delta(n,\ell)$ is straightforward (or see \cite{FH} or \cite{Far}). It is also clear that the action is cube-permuting. 

Next we show that the action of $G_{n,\ell}$ on vertices is free. Let $v = \{ b_{1}, \ldots, b_{\alpha} \}$ be such that the supports of the $b_{i}$ are arranged from left to right. Assume $g \in G_{n,\ell}$ is such that $g \cdot v = v$. It follows that $g \cdot b_{i} = b_{i}$ for $i=1, \ldots, \alpha$, since $g$ is order-preserving. Let 
$b_{i} = [f_{i}, D_{i}]$ where $Dom(f_{i}) = D_{i}$. We therefore have $[gf_{i},D_{i}] = [f_{i},D_{i}]$, so there is $h \in S_{n,\ell}$ such that (i) the domain and image of $h$ are $D_{i}$, and (ii) $gf_{i} = hf_{i}$. Condition (i) implies that $h = id_{D_{i}}$ and thus $gf_{i} = f_{i}$. This implies, in particular, that $g$ is the identity on $Im(f_{i})$. Since this is true for all $i$, and $\{ f_{1}(D_{1}), \ldots, f_{\alpha}(D_{\alpha}) \}$ is a partition of $Dom(g)$, $g = 1$. Thus, the action of $G_{n,\ell}$ on vertices is free.

Assume $g$ stabilizes a cube $C(v_{1},v_{2}) \subseteq \Delta(n,\ell)$. Since $g$ is height-preserving (i.e., cardinality preserving ), it will necessarily fix the unique vertex $v_{1} \in C(v_{1},v_{2})$ of minimal height. It then follows that $g=1$ by the above reasoning. Thus, $G_{n,\ell}$ acts freely on $\Delta(n,\ell)$. 

Assume that $v_{1}$ and $v_{2}$ have the same type.
Let $v_{1} = \{ [f_{1}, D_{1}], \ldots, [f_{\alpha},D_{\alpha}] \}$ and
$v_{2} = \{ [g_{1}, E_{1}], \ldots, [g_{\alpha},E_{\alpha}] \}$. It follows that, for $i=1, \ldots, \alpha$, there is $s_{i} \in S_{n, \ell}$  such that $s_{i}: D_{i} \rightarrow E_{i}$ is a bijection. We let $g \in G_{n,\ell}$ be the homeomorphism of $[r_{1},r_{k}]$ that restricts to $g_{i}s_{i}f_{i}^{-1}$ on the set $f_{i}(D_{i})$. We evaluate $g$ on the vertex $v_{1}$:
\begin{align*}
g \cdot v_{1} &= \{ [gf_{1},D_{1}], \ldots, [gf_{\alpha},D_{\alpha}] \} \\ 
&= \{ [g_{1}s_{1},D_{1}], \ldots, [g_{\alpha}s_{\alpha},D_{\alpha}] \} \\
&= \{ [g_{1},s_{1}(D_{1})], \ldots, [g_{\alpha},s_{\alpha}(D_{\alpha})] \} \\
&= v_{2}.
\end{align*}
(The second-to-last equality is due the definition of $\sim$.)
It follows that $v_{1}$ and $v_{2}$ are in the same $G_{n,\ell}$-orbit.

If $C(v_{1},v_{2})$ and $C(v'_{1},v'_{2})$ have the same type, then the $g$ that sends $v_{1}$ and $v'_{1}$ will also send $v_{2}$ to $v'_{2}$, since $g$ preserves the left-to-right ordering. It follows easily that $C(v_{1},v_{2})$ and $C(v'_{1},v'_{2})$ are in the same $G_{n,\ell}$-orbit as well.

Conversely, the action of $G_{n,\ell}$ is easily seen to preserve the type of a vertex.
\end{proof}

\begin{corollary} \label{corollary:groupiso}
(An isomorphism of groups)
We have an isomorphism
\[ G_{n,\ell} \cong \pi_{1}\left(G_{n,\ell} \backslash \Delta^{(2)}(n,\ell) \right), \]
where $G_{n,\ell} \backslash \Delta^{(2)}(n,\ell)$ is the quotient of the 
$2$-skeleton of $\Delta(n,\ell)$ by the left action of $G_{n,\ell}$. 
\end{corollary}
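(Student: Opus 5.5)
The plan is to use covering space theory. By Theorem \ref{theorem:CATSn,l}, $\Delta(n,\ell)$ is a CAT(0) cubical complex, so it is contractible. In particular it is connected and simply connected, and the same holds for its $2$-skeleton $\Delta^{(2)}(n,\ell)$, since the inclusion of the $2$-skeleton of a CW complex induces isomorphisms on $\pi_{0}$ and $\pi_{1}$. Proposition \ref{proposition:action} says that $G_{n,\ell}$ acts freely and cube-permutingly. Once we know the quotient map $\Delta^{(2)}(n,\ell) \rightarrow G_{n,\ell}\backslash \Delta^{(2)}(n,\ell)$ is a regular covering with deck group $G_{n,\ell}$, the isomorphism follows. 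Simple connectivity of the total space then identifies $\pi_{1}$ of the base with the deck group.

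The key step is to check that the action is properly discontinuous in the covering-space sense: every point should have a neighbourhood whose translates are pairwise disjoint. I would work with the simplicial structure of Definition \ref{definition:deltaB}.

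\emph{Height preservation.} Each $g$ preserves cardinality, i.e.\ height, and left-to-right order. Hence $g$ maps each cube $C(v_{1},v_{2})$ to $C(g v_{1}, g v_{2})$ and preserves the partial order inside cubes. So $g$ acts simplicially, and it preserves the chain order of the vertices of each simplex.

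\emph{Stabilizers.} Suppose $g$ stabilizes a simplex $\sigma$. Then $g$ fixes the minimal-height vertex of $\sigma$, so $g=1$ by freeness on vertices. Consequently, if $g\sigma \cap \sigma \neq \emptyset$, the intersection is a face fixed setwise, hence pointwise by order preservation, and so $g=1$.

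\emph{Open stars.} Because the action is simplicial and no nontrivial element fixes any simplex, the open star of a vertex $v$ meets its translate by $g \neq 1$ only if some simplex containing $v$ and some simplex containing $gv$ share a point. That point lies in a common face $\tau$. I would then argue $gv \neq v$ while using only that $g$ acts without fixing any simplex. This is the fiddly point, and I would resolve it by passing to the barycentric subdivision. There, a free simplicial action with no simplex fixed setwise is a covering action: open stars of vertices in the subdivision have disjoint translates, since translates of distinct vertices of one simplex are never identified by a nontrivial $g$.

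This is the standard fact that a free simplicial action which fixes no simplex setwise gives a covering of the quotient CW complex. The quotient inherits a cell structure, and it is this structure that is later compared with the Squier complex.

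The main obstacle I anticipate is this covering verification. In particular, I must make sure that "free" in Proposition \ref{proposition:action} covers all points and not just vertices. The argument above via minimal-height vertices handles that, since it shows $g$ fixes no cube and hence no simplex setwise.

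Finally, the restriction to the $2$-skeleton is harmless. It is $G_{n,\ell}$-invariant, it is connected and simply connected, and the restricted action is still a covering action. Therefore $\pi_{1}(G_{n,\ell}\backslash \Delta^{(2)}(n,\ell)) \cong G_{n,\ell}$.
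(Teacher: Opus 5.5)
Your proposal is correct and follows the same route as the paper. CAT(0) gives contractibility, $G_{n,\ell}$ acts by covering transformations, and cellular approximation makes the passage to the $2$-skeleton harmless. The only differences are that you restrict to $\Delta^{(2)}(n,\ell)$ before invoking covering theory rather than after, and that you verify the covering-action property that the paper simply asserts.

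The barycentric subdivision in that verification is unnecessary. If a point lies in the open stars of both $v$ and $gv$, then the open simplex containing it has both $v$ and $gv$ as vertices. Distinct vertices of a simplex have distinct heights, so $gv=v$, and hence $g=1$ by freeness on vertices.
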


\begin{proof}
We recall that CAT(0) spaces are contractible \cite{BH}. Since the action of $G_{n,\ell}$ on $\Delta(n,\ell)$ is by covering transformations, we have an isomorphism $G_{n,\ell} \cong \pi_{1}(G_{n,\ell} \backslash \Delta(n,\ell))$ due to covering space theory (\cite{Hatcher, Munkres}). The corollary follows, since, by cellular approximation \cite{Ross}, $\Delta^{(2)}(n,\ell)$ and $\Delta(n,\ell)$ have isomorphic fundamental groups. Note that the cell structure of $\Delta(n,\ell)$ is assumed to be the cubical one. 
\end{proof}

\section{Isomorphism of $D(\mathcal{P}_{n,\ell},\omega_{n,\ell})$
with $G_{n,\ell}$} \label{section:isomorphism} 

\subsection{The correspondence between $\mathcal{V}_{\mathcal{B}_{n,\ell}}$ and $[\omega_{n,\ell}]$}

By the choice of $\omega_{n,\ell}$ (Subsection \ref{subsection:basepoint}), there is a partition $D_{1}, \ldots, D_{m}$ of $[r_{1},r_{k}]$, ordered from left-to-right, such that
\[ \mathcal{L}(D_{1})\mathcal{L}(D_{2})\ldots \mathcal{L}(D_{m})
\equiv \omega_{n,\ell}. \]
We let $v_{\ast} = \{ [id,D_{1}], \ldots, [id,D_{m}] \}$; clearly, $v_{\ast} \in 
\Delta(n,\ell)$ and $\mathcal{L}(v_{\ast}) \equiv \omega_{n,\ell}$. 

%

\begin{lemma} \label{lemma:image} (Codomain of $\mathcal{L}: \mathcal{V}_{\mathcal{B}_{n,\ell}} \rightarrow \Sigma^{+}_{n,\ell}$)
The function $\mathcal{L}: \mathcal{V}_{\mathcal{B}_{n,\ell}} \rightarrow \Sigma^{+}_{n,\ell}$ sends
$\mathcal{V}_{\mathcal{B}_{n,\ell}}$ into $[\omega_{n,\ell}]$.
\end{lemma}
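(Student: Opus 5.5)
The plan is to show that every vertex $v \in \mathcal{V}_{\mathcal{B}_{n,\ell}}$ can be joined to $v_{\ast}$ by a sequence of vertices. Consecutive vertices in the sequence will differ by a single expansion $\{b\} \mapsto E_{1}(b)$ or its reverse, and each such step will change $\mathcal{L}$ by one application of a relation of $\mathcal{R}_{n,\ell}$. Since $\mathcal{L}(v_{\ast}) \equiv \omega_{n,\ell}$, this gives $\mathcal{L}(v) \in [\omega_{n,\ell}]$.

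\textbf{Step 1: one expansion applies one relation.} Let $b = [f,D]$ lie in a vertex, and let $X = \mathcal{L}(D)$ be the label of the reduced node of $T_{n,\ell}$ determined by $D$. By Definition \ref{definition:E} and Remark \ref{remark:connect}, the labelled children of this node, read left to right, carry the labels of the right side $\omega(X)$ of the relation $(X,\omega(X))$ with the $\ast$'s deleted. An unlabelled child corresponds to an interval meeting $[r_{1},r_{k}]$ in at most one point, so it contributes no domain.

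Some labelled children may be unreduced. In that case the domain $D_{i}$ of such a child equals the domain of its unique labelled descendant reduced node. This descendant lies in the same automaton $A_{n}(r_{i})$ and is reached by following edges labelled $n-1$ (respectively $0$). By the equivalence relation of Subsection \ref{subsection:labels}, it therefore carries the same label as the child. Hence $\mathcal{L}(D_{i})$ is still that child's label.

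It follows that replacing $b$ by $E_{1}(b)$ inside a vertex $v$ changes the word $\mathcal{L}(v)$ by replacing the single letter $X$ with $\omega(X)$. This is an elementary move modulo $\mathcal{P}_{n,\ell}$. The case $X = x$ is the relation $(x,x^{n})$.

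\textbf{Step 2: reach a common expansion.} By the proof of Theorem \ref{theorem:CATSn,l}, there is a vertex $v''$ with $v \preceq v''$ and $v_{\ast} \preceq v''$. By definition, $\preceq$ is generated by moves $v' \dot{\preceq} w$ with $w \in C(v',v_{2})$. Any such move can be written as a composite of single expansions, one element of $v_{2}$ at a time, because the corners of a cube are ordered by the coordinatewise order. Applying Step 1 along both ascending chains gives
\[ \mathcal{L}(v) =_{\mathcal{P}} \mathcal{L}(v'') =_{\mathcal{P}} \mathcal{L}(v_{\ast}) \equiv \omega_{n,\ell}. \]

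\textbf{Main obstacle.} The delicate point is Step 1. Two things must be checked. First, $\mathcal{L}$ must be well defined on $[f,D]$, which holds because it depends only on $D$. Second, the labels on the domains of the children must match exactly the word $\omega(X)$ read from the automaton. This requires combining Lemma \ref{lemma:domandpar} with the correspondence in Remark \ref{remark:connect}, including the bookkeeping for unreduced children. I would verify this separately for the three cases: $X$ a state of $A_{n}(r_{1})$, $X$ a state of $A_{n}(r_{k})$, and $X$ a state of an interior automaton or equal to $x$.
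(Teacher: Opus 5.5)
Your proposal is correct and takes essentially the paper's route. Both arguments rest on the fact that a single expansion $\{b\}\mapsto E_{1}(b)$ replaces one letter $X$ of $\mathcal{L}(v)$ by $\omega(X)$; the paper propagates this along paths using connectivity of $\Delta(n,\ell)$, treating expansions and collapses, whereas you climb from $v$ and $v_{\ast}$ to a common upper bound via directedness, so you need only expansions, and you also spell out the bookkeeping for unreduced children that the paper leaves implicit. One wording fix: $\mathcal{L}([f,D])$ is not well defined because the class determines $D$ (it does not), but because $(f_{1},D_{1})\sim(f_{2},D_{2})$ forces $D_{1}$ and $D_{2}$ to correspond to reduced nodes carrying the same label.
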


\begin{proof}
As noted above, $\mathcal{L}(v_{\ast}) \equiv \omega_{n,\ell}$. It therefore suffices, by connectivity of $\Delta(n,\ell)$,  to show that the condition $\mathcal{L}(v) \in [\omega_{n,\ell}]$ is closed under adjacency in $\Delta(n,\ell)$. 

Thus, we assume that $v'$ and $v''$ are adjacent, and $\mathcal{L}(v') \in [\omega_{n,\ell}]$. Let $v' = \{ [f_{1},D_{1}], \ldots, [f_{m},D_{m}] \}$
and $\mathcal{L}(v') \equiv a_{1}\ldots a_{m}$. That is, $\mathcal{L}(D_{i}) \equiv a_{i}$ for all $i$.

Assume first that $v''$ is obtained from $v'$ by expansion at
$[f_{i},D_{i}]$. This means that $v''$ is the result of replacing the pair $[f_{i},D_{i}]$ with the members
of $E_{1}([f_{i},D_{i}])$. It follows directly that 
$\mathcal{L}(v'') \equiv a_{1}\ldots a_{i-1} w_{a_{i}} a_{i+1} \ldots a_{m}$, where $w_{a_{i}}$ is the word that labels the ordered children of the node $D_{i}$. Since $a_{i} \rightarrow w_{a_{i}}$  is a relation in $\mathcal{P}_{n,\ell}$,  
$\mathcal{L}(v'') \in [\omega_{n,\ell}]$.  

Assume that $v''$ is obtained from $v'$ by collapsing at a subset
\[ [f_{d},D_{d}], \ldots, [f_{d+e},D_{d+e}],\]
where a \emph{collapse} is the inverse of an expansion. There is a pair $[g_{j},E_{j}] \in v''$ such that $[g_{j},E_{j}]$ expands to 
\[ [f_{d},D_{d}], \ldots, [f_{d+e},D_{d+e}]. \]  Let $\mathcal{L}(E_{j}) \equiv c_{j}$
and let $w_{c_{j}}$ be the (collective) label of $[f_{d},D_{d}], \ldots, 
[f_{d+e},D_{d+e}]$. If 
\[ v'' = \{ [g_{1}, E_{1}], \ldots, [g_{p}, E_{p}] \} \]
and $\mathcal{L}(v'') \equiv c_{1}\ldots c_{p}$, then
$\mathcal{L}(v') \equiv c_{1} \ldots c_{j-1} w_{c_{j}} c_{j+1} \ldots c_{p}$. Thus, 
$\mathcal{L}(v'')$ is the result of applying the relation
$w_{c_{j}} \rightarrow c_{j}$ to $\mathcal{L}(v')$. Thus,
$\mathcal{L}(v'') \in [\omega_{n,\ell}]$.
\end{proof}

\begin{lemma} \label{lemma:surj}  (Surjectivity of $\mathcal{L}$)
The map $\mathcal{L}: \mathcal{V}_{\mathcal{B}_{n,\ell}} \rightarrow [\omega_{n,\ell}]$ is surjective.
\end{lemma}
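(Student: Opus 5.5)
We argue by induction along a chain of elementary moves. Let $\omega' \in [\omega_{n,\ell}]$. By definition of equivalence modulo $\mathcal{P}_{n,\ell}$, there is a sequence $\omega_{n,\ell} \equiv \omega_{1}, \omega_{2}, \ldots, \omega_{m} \equiv \omega'$ in which each $\omega_{j+1}$ is obtained from $\omega_{j}$ by a single application of a relation $X \rightarrow w_{X}$ or its inverse. The base case is supplied by $v_{\ast}$, since $\mathcal{L}(v_{\ast}) \equiv \omega_{n,\ell}$. It therefore suffices to prove the following: if $v \in \mathcal{V}_{\mathcal{B}_{n,\ell}}$ satisfies $\mathcal{L}(v) \equiv \omega_{j}$, then some vertex $v''$ (adjacent to $v$) satisfies $\mathcal{L}(v'') \equiv \omega_{j+1}$.

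\emph{Forward application.} Suppose $\omega_{j} \equiv a X b$ and $\omega_{j+1} \equiv a w_{X} b$. Write $v = \{ b_{1}, \ldots, b_{p} \}$ in left-to-right order, and let $b_{i} = [f_{i},D_{i}]$ be the member with $\mathcal{L}(b_{i}) \equiv X$. The node determined by $D_{i}$ is reduced and carries the label $X$. By Remark \ref{remark:connect}, together with the way $\mathcal{R}_{n,\ell}$ was read off the automata, its labelled children carry, in order, exactly the word $w_{X}$. (The unlabelled $\ast$ children contribute nothing, and unreduced children define the same domain as their reduced descendant, with the same label.) Expanding at $b_{i}$, i.e.\ replacing $b_{i}$ by $E_{1}(b_{i})$, therefore yields a vertex $v''$ with $\mathcal{L}(v'') \equiv a w_{X} b$.

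\emph{Inverse application.} This is the main obstacle. Suppose $\omega_{j} \equiv a w_{X} b$ and $\omega_{j+1} \equiv a X b$, so that a consecutive block $b_{d}, \ldots, b_{d+e}$ of $v$ has labels spelling $w_{X}$. We must show these can be collapsed, i.e.\ that $\{b_{d}, \ldots, b_{d+e}\} = E_{1}([g,E])$ for some $[g,E]$ with $\mathcal{L}(E) \equiv X$. The labels alone do not guarantee this: the $f_{i}$ are unrelated. The plan is as follows.
\begin{enumerate}
\item Choose a reduced node $E$ in $T_{n,\ell}$ with label $X$, and let $E_{1}, \ldots, E_{e+1}$ be its labelled children. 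These have labels matching $b_{d}, \ldots, b_{d+e}$ and, by Lemma \ref{lemma:+}, may be taken reduced.
\item Since $\mathcal{L}(D_{d+t}) = \mathcal{L}(E_{1+t})$, there is $s_{t} \in S_{n,\ell}$ mapping $E_{1+t}$ onto $D_{d+t}$.
\item The domains $E_{1}, \ldots, E_{e+1}$ partition $E$ (in the $[r_{1},r_{k}]$-restricted sense), and the supports $f_{d+t}(D_{d+t})$ are consecutive. Hence the union $g = \bigcup_{t} f_{d+t} s_{t}$ is a homeomorphism from $E$ onto an interval, and it lies in $\widehat{S}_{n,\ell}$ because it is locally determined by $S_{n,\ell}$.
\item By the definition of $\sim$, $[g_{\mid E_{1+t}}, E_{1+t}] = [f_{d+t}, D_{d+t}]$. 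Thus $E_{1}([g,E]) = \{ b_{d}, \ldots, b_{d+e} \}$.
\end{enumerate}
Replacing this block by $[g,E]$ gives a vertex $v''$, adjacent to $v$ by a collapse, with $\mathcal{L}(v'') \equiv a X b$. The delicate points are verifying that $g$ is an injective, order-preserving homeomorphism onto an interval, and that $g$ belongs to $\widehat{S}_{n,\ell}$; both should follow from the left-to-right ordering of supports in a vertex. Edge cases, such as labels $X$ from $A_{n}(r_{1})$ or $A_{n}(r_{k})$ where $\ast$ padding occurs, are handled in the same way, since only the labelled children enter $E_{1}$.

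Combining the two cases with the base case and inducting on $j$ gives surjectivity.
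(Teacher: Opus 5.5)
Your argument matches the paper's own proof move for move: forward applications are realized by an expansion, and inverse applications by assembling $[g,E]$ with $g=\bigcup_{t} f_{d+t}s_{t}$. It also shares the paper's gap.

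\textbf{The gap.} Step (1), ``choose a reduced node $E$ in $T_{n,\ell}$ with label $X$,'' is never justified, and it is false in general. Every reduced automaton state contributes a letter and a relation to $\mathcal{P}_{n,\ell}$. But a letter labels a node of $T_{n,\ell}$ only if some address accepted by that automaton \emph{alone} has its accepting state in the corresponding class (Definition \ref{definition:Tnl}). In the running example, $F$ is the basepoint of $A_{3}(5/6)$ and lies off the loop. Its only accepting address is therefore $\varepsilon$, which all three automata accept, so no node is labelled $F$, even though $(F,xxG)\in\mathcal{R}_{3,\ell}$. Applying $x\rightarrow xxx$ and then $xxG\rightarrow F$ gives
\[ AExG \longrightarrow AExxxG \longrightarrow AExF, \]
so $AExF\in[\omega_{3,\ell}]$, yet the letter $F$ occurs in no $\mathcal{L}(v)$. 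The statement as written therefore fails, and Propositions \ref{proposition:bij} and \ref{proposition:compiso} inherit the problem. No argument of this shape can be completed without changing the presentation.

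\textbf{What your argument does prove.} It gives surjectivity onto the class of $\omega_{n,\ell}$ modulo a pruned presentation $\mathcal{P}'_{n,\ell}$. This is obtained by deleting the letters that label no node of $T_{n,\ell}$, together with the relations having them on the left. For a surviving letter $X$, step (1) is justified as follows; Lemma \ref{lemma:+} is not the relevant fact.
\begin{enumerate}
\item Start at any node labelled $X$.
\item While it is unreduced, pass to its unique labelled child. That child carries the same label and determines the same domain (proof of Lemma \ref{lemma:domandpar}).
\item This terminates, because each class contains a reduced automaton state.
\item At the resulting reduced node, the labelled children spell $w_{X}$ by Remark \ref{remark:connect}.
\end{enumerate}
In particular every letter of $w_{X}$ survives, so $\mathcal{P}'_{n,\ell}$ is closed under its relations.

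\textbf{Recovering the main theorem.} To recover Theorem \ref{theorem:bigone} you then need $D(\mathcal{P}_{n,\ell},\omega_{n,\ell})\cong D(\mathcal{P}'_{n,\ell},\omega_{n,\ell})$. The deleted letters of each automaton form an initial segment of its tail. So the earliest one, $X_{0}$, occurs in no relation except $X_{0}\rightarrow w_{X_{0}}$. In a diagram whose boundary avoids deleted letters, each $X_{0}$-edge is then the bottom of a cell $w_{X_{0}}\rightarrow X_{0}$ and the top of a cell $X_{0}\rightarrow w_{X_{0}}$, which is a dipole. Inducting along the tail shows that reduced diagrams contain no deleted letters, which gives the isomorphism.
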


\begin{proof}
Since $\mathcal{L}(v_{\ast}) \equiv \omega_{n,\ell}$, it suffices to show that the property of being in the image of $\mathcal{L}$ is closed under the application of any relation from $\mathcal{R}_{n,\ell}$. 

Assume that $\omega_{1} \equiv a_{1} \ldots a_{m}$ and 
$v = \{ [f_{1},D_{1}], \ldots, [f_{m},D_{m}] \}$ is such that 
$\mathcal{L}(v) \equiv \omega_{1}$. We must show:
(i) that if $\omega_{2} \equiv a_{1} \ldots a_{j-1} w_{a_{j}} a_{j+1} \ldots a_{m}$, where $a_{j} \rightarrow w_{a_{j}}$ is a relation in $\mathcal{R}_{n,\ell}$, then $\omega_{2} \in \mathcal{L}(\mathcal{V}_{\mathcal{B}_{n,\ell}})$, and (ii) that if
$\omega_{2} \equiv a_{1}\ldots a_{\alpha} c a_{\beta} \ldots a_{m}$, where
$c \rightarrow a_{\alpha+1} \ldots a_{\beta-1}$ is a relation in $\mathcal{R}_{n,\ell}$ ($\beta - \alpha >2$), then $\omega_{2} \in \mathcal{L}(\mathcal{V}_{\mathcal{B}_{n,\ell}})$.

Case (i) is straightforward: the expansion 
of $v$ at $[f_{j},D_{j}]$ directly yields a vertex $v'$ such that $\mathcal{L}(v') = \omega_{2}$, since the ordered children of the node $D_{j}$ are labelled $w_{a_{j}}$ by the definition of $\mathcal{P}_{n,\ell}$.

Case (ii) is slightly more subtle. We are given a relation 
$c \rightarrow a_{\alpha+1} \ldots a_{\beta-1}$ and must produce
$v'$ such that $\mathcal{L}(v') \equiv  a_{1}\ldots a_{\alpha} c a_{\beta} \ldots a_{m}$. Since $c \rightarrow a_{\alpha+1} \ldots a_{\beta-1} \in \mathcal{R}_{n,\ell}$, we can find a reduced node $E \in T_{n,\ell}$ labelled by $c$ whose children 
\[ E_{\alpha+1}, \ldots, E_{\beta-1}\] 
are collectively labelled $a_{\alpha+1} \ldots a_{\beta-1}$. It follows that
there are transformations $s_{\alpha+1}, \ldots, s_{\beta-1} \in S_{n,\ell}$
that match $E_{j}$ to $D_{j}$, for $j \in \{ \alpha+1, \ldots, \beta-1 \}$. 
It follows directly that
\[ [f_{j},D_{j}] = [f_{j},s_{j}(E_{j})] = [f_{j}s_{j},E_{j}], \]
for $j \in \{ \alpha+1, \ldots, \beta-1 \}$. We can now let 
$f$ be the disjoint union of the transformations 
$f_{j}s_{j}$ for $j \in \{ \alpha+1, \ldots, \beta-1 \}$. Define
\[ v' = \{ [f_{1},D_{1}], \ldots [f_{\alpha},D_{\alpha}], [f,E], [f_{\beta},D_{\beta}], \ldots, [f_{m},D_{m}] \}. \]
It now follows that $\mathcal{L}(v') \equiv \omega_{2}$. 
\end{proof}

\begin{proposition} \label{proposition:bij} 
(A bijection between vertices in $G_{n,\ell} \backslash \Delta(n,\ell)$ and $[\omega_{n,\ell}]$)
The function $\mathcal{L}: \mathcal{V}_{\mathcal{B}_{n,\ell}} \rightarrow [\omega_{n,\ell}]$
induces a bijection between the $G_{n,\ell}$-orbits in $\mathcal{V}_{\mathcal{B}_{n,\ell}}$ 
and $[\omega_{n,\ell}]$. 
\end{proposition}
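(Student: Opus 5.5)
By Lemmas \ref{lemma:image} and \ref{lemma:surj}, $\mathcal{L}$ maps $\mathcal{V}_{\mathcal{B}_{n,\ell}}$ onto $[\omega_{n,\ell}]$. So the plan is to show two things: that $\mathcal{L}$ is constant on $G_{n,\ell}$-orbits, and that two vertices with the same label word lie in the same orbit. The induced map on orbits is then well defined, surjective, and injective.

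\textbf{Constancy on orbits.} Let $g \in G_{n,\ell}$ and $v = \{ [f_{1},D_{1}], \ldots, [f_{m},D_{m}] \}$, listed from left to right by support. Then
\[ g \cdot v = \{ [gf_{1},D_{1}], \ldots, [gf_{m},D_{m}] \}. \]
Since $g$ is increasing, the supports $gf_{i}(D_{i})$ occur in the same left-to-right order. The label $\mathcal{L}([gf_{i},D_{i}]) = \mathcal{L}(D_{i})$ depends only on the domain, so $\mathcal{L}(g\cdot v) \equiv \mathcal{L}(v)$. One small point must be checked: $\mathcal{L}$ is well defined on equivalence classes. If $[f,D] = [f',D']$, then there is $s \in S_{n,\ell}$ with $s: D \to D'$ a bijection. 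By Definition \ref{definition:Snl} and Lemma \ref{lemma:domandpar}, $D$ and $D'$ correspond to reduced labelled nodes with the same label, so $\mathcal{L}(D) = \mathcal{L}(D')$.

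\textbf{Injectivity on orbits.} Suppose $\mathcal{L}(v) \equiv \mathcal{L}(v')$. Then $|v| = |v'| = m$. The left-to-right bijection pairs $[f_{i},D_{i}]$ with $[g_{i},E_{i}]$, where $\mathcal{L}(D_{i}) = \mathcal{L}(E_{i})$. The key step is to show that equal labels give equal domain type, i.e.\ $D_{i} \approx E_{i}$. By Lemma \ref{lemma:domandpar}, $D_{i}$ and $E_{i}$ correspond to reduced labelled nodes with addresses $\omega_{1}$ and $\omega_{2}$ carrying the same label. Then $\sigma_{\omega_{1}}^{\omega_{2}} \in S'_{n,\ell}$ by Definition \ref{definition:Snl}, and its restriction to $[r_{1},r_{k}]$ is an element of $S_{n,\ell}$ with domain $D_{i}$ and image $E_{i}$.

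Once $D_{i} \approx E_{i}$ is established, $v$ and $v'$ have the same type in the sense of Definition \ref{definition:domaintype}. The orbit argument in the proof of Proposition \ref{proposition:action} then applies. It builds $g$ piecewise as $g_{i}s_{i}f_{i}^{-1}$ on $f_{i}(D_{i})$, and this gives $g \cdot v = v'$. One must confirm that this $g$ lies in $G_{n,\ell}$. This holds because $g$ is locally determined by $\widehat{S}_{n,\ell}$ and the supports partition $[r_{1},r_{k}]$, so $g \in \Gamma(S_{n,\ell}) = G_{n,\ell}$ by Theorem \ref{theorem:big}.

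\textbf{Main obstacle.} The only delicate point is the identification ``same label $\Rightarrow$ same domain type.'' This needs the label to be attached to the \emph{reduced} node determined by $D$, not an unreduced ancestor. Lemma \ref{lemma:domandpar} supplies exactly this, since the map from reduced labelled nodes to domains is bijective. Everything else follows directly from Lemmas \ref{lemma:image}, \ref{lemma:surj}, and Proposition \ref{proposition:action}.
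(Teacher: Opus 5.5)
Your proof is correct and takes the same approach as the paper, which simply combines Lemma \ref{lemma:surj} with the orbit criterion of Proposition \ref{proposition:action}. Your extra checks are the well-definedness of $\mathcal{L}$ on classes $[f,D]$, and especially the step that equal labels force equal domain type via $\sigma_{\omega_{1}}^{\omega_{2}} \in S'_{n,\ell}$; these spell out the link between ``same type sequence'' (Definition \ref{definition:typeseq}) and ``same type'' (Definition \ref{definition:domaintype}) that the paper's proof of Proposition \ref{proposition:action} leaves implicit.
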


\begin{proof}
This follows easily from Lemma \ref{lemma:surj}
and from the fact that two vertices $v', v''$ in $\mathcal{V}_{\mathcal{B}_{n,\ell}}$ have the same label if and only if they are in the same orbit under the action of 
$G_{n,\ell}$ (Proposition \ref{proposition:action}).
\end{proof}

\subsection{The map on the $2$-skeleton}

\begin{proposition} \label{proposition:graphiso}
(An isomorphism of graphs)
Let $C_{1}(\Delta(n,\ell))$ be the set of all $1$-cubes in 
$\Delta(n,\ell)$ and let $C_{1}(\mathcal{S}(\mathcal{P}_{n,\ell}, \omega_{n,\ell}))$ be the collection of $1$-cells in the connected component of the relevant Squier complex.  

Define $\mathcal{L}: C_{1}(\Delta(n,\ell)) \rightarrow
C_{1}(\mathcal{S}(\mathcal{P}_{n,\ell}, \omega_{n,\ell}))$ by sending a cube $C$ with type sequence \[ a_{1}\ldots a_{i-1} (a_{i}) a_{i+1} \ldots a_{m} \]
to the $1$-cell $(a_{1}\ldots a_{i-1}, a_{i} \rightarrow w_{a_{i}}, a_{i+1}\ldots a_{m})$.  

The function $\mathcal{L}$ induces an isomorphism between the graphs
$G_{n,\ell} \backslash (\Delta(n,\ell))^{(1)}$ 
and $\mathcal{S}(\mathcal{P}_{n,\ell},\omega_{n,\ell})^{(1)}$, extending the bijection from
Proposition \ref{proposition:bij}. 
\end{proposition}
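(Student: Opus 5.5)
We check three things in turn: that the assignment on $1$-cubes is well defined and lands in the correct component of the Squier complex, that it is compatible with endpoints, and that it induces a bijection on orbits of $1$-cubes.

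\emph{Well-definedness.} A $1$-cube of $\Delta(n,\ell)$ has the form $C(v_{1},\{b_{i}\})$, where $v_{1}=\{b_{1},\ldots,b_{m}\}$ is listed left to right. Its type sequence is $a_{1}\ldots(a_{i})\ldots a_{m}$ with $a_{j}=\mathcal{L}(b_{j})$. Because $b_{i}\in v_{2}$, we have $|\mathcal{E}(b_{i})|=2$, so the node determined by $b_{i}$ is reduced. The ordered labelled children of that node are labelled by exactly the word $w_{a_{i}}$ with $a_{i}\rightarrow w_{a_{i}}\in\mathcal{R}_{n,\ell}$, by Remark \ref{remark:connect}, once the $\ast$'s are deleted. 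Hence the triple $(a_{1}\ldots a_{i-1},a_{i}\rightarrow w_{a_{i}},a_{i+1}\ldots a_{m})$ is a genuine edge of $S(\mathcal{P}_{n,\ell})$. One point needs care: by Lemma \ref{lemma:+}, each label has exactly one relation with that label on the left, so the relation is determined by the letter $a_{i}$ alone.

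\emph{Compatibility with endpoints.} The initial vertex of the Squier edge is $\mathcal{L}(v_{1})$. Its terminal vertex is $\mathcal{L}(v_{1}-\{b_{i}\}\cup E_{1}(b_{i}))$, and this identity was computed in the proof of Lemma \ref{lemma:image}. Since $\mathcal{L}(v_{1})\in[\omega_{n,\ell}]$ by Lemma \ref{lemma:image}, the edge lies in the component $S(\mathcal{P}_{n,\ell},\omega_{n,\ell})$. Moreover, $\mathcal{L}$ depends only on the type sequence, and by Proposition \ref{proposition:action} the type sequence is constant on $G_{n,\ell}$-orbits. So $\mathcal{L}$ descends to a map of graphs from $G_{n,\ell}\backslash\Delta(n,\ell)^{(1)}$ to $S(\mathcal{P}_{n,\ell},\omega_{n,\ell})^{(1)}$. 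This map agrees on vertices with the bijection of Proposition \ref{proposition:bij}.

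\emph{Injectivity on edges.} Proposition \ref{proposition:action} says that two cubes lie in the same orbit if and only if they have the same type sequence. It remains to see that the Squier edge recovers the type sequence. The edge records the word $a_{1}\ldots a_{m}$ together with the position $i$, namely $i=|a_{1}\ldots a_{i-1}|+1$, and this data is the type sequence.

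\emph{Surjectivity on edges.} This is the main step. Take an edge $(p,a\rightarrow w_{a},q)$ whose initial vertex $paq$ lies in $[\omega_{n,\ell}]$. By Lemma \ref{lemma:surj} there is a vertex $v$ with $\mathcal{L}(v)\equiv paq$; let $b$ be the member of $v$ in position $|p|+1$. We must show that $b$ can be expanded, i.e.\ $|\mathcal{E}(b)|=2$. By Definition \ref{definition:E} every $b=[f,D]$ has an $E_{1}$ given by the labelled children of the reduced node determined by $D$. That node is reduced, so it has at least two labelled children, and $E_{1}(b)\neq E_{0}(b)$. Hence $C(v,\{b\})$ is a $1$-cube, and its image is the given edge. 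The edge could also be traversed in the opposite direction, so we should check that each undirected Squier edge arises exactly once. This holds because every cube carries a canonical orientation from lower to higher height, and the Squier edge is likewise oriented from $a$ to $w_{a}$, where $|w_{a}|\geq 2$ preserves the height orientation.
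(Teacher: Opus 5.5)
Your proposal is correct and follows essentially the paper's argument. You get surjectivity on edges from the vertex correspondence (Lemma~\ref{lemma:surj} and Proposition~\ref{proposition:bij}), injectivity on orbits from the type-sequence criterion of Proposition~\ref{proposition:action}, and agreement of endpoints from the computation in Lemma~\ref{lemma:image}. You also make explicit some checks the paper leaves implicit: each $b$ is expandable because its node is reduced, each letter has a unique relation on the left, and $|w_{a}|\geq 2$.
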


\begin{proof}
We first argue that $\mathcal{L}$ determines a bijection between
$G_{n,\ell}$-orbits of $1$-cubes in $\Delta(n,\ell)$ and the $1$-cells of $\mathcal{S}(\mathcal{P}_{n,\ell},\omega_{n,\ell})$.

Let $e$ be an edge in $\mathcal{S}(\mathcal{P}_{n,\ell},\omega_{n,\ell})$. 
It follows that \[ e = (a_{1}\ldots a_{i-1}, a_{i} \rightarrow w_{a_{i}}, a_{i+1} \ldots a_{m}), \] for some $a_{1}, \ldots, a_{m} \in \Sigma_{n,\ell}$ and
some $i$, where $a_{1} \ldots a_{m} \in [\omega_{n,\ell}]$. By Proposition \ref{proposition:bij}, there is a vertex 
$v \in \Delta(n,\ell)$ such that 
$\mathcal{L}(v) \equiv a_{1} \ldots a_{m}$. Thus, $v = \{ b_{1}, \ldots, b_{m} \} \in \mathcal{V}_{\mathcal{B}_{n,\ell}}$, for some $b_{1}, \ldots, b_{m} \in \mathcal{B}_{n,\ell}$, where $\mathcal{L}(b_{i}) \equiv a_{i}$ for $i= 1, \ldots, m$. 
It follows directly that $\mathcal{L}$ carries the $1$-cube 
$C(v,\{ b_{i} \})$ to $e$. Thus, $\mathcal{L}$ is surjective.

Next, we recall that two cubes have the same type sequence if and only 
if they are in the same $G_{n,\ell}$-orbit (Proposition \ref{proposition:action}). It follows easily that
$\mathcal{L}$ determines a bijection between the $1$-cubes
in $\Delta(n,\ell)$ and the $1$-cells in $\mathcal{S}(\mathcal{P}_{n, \ell},\omega_{n,\ell})$. 

It remains to show that this bijection 
preserves the required incidence relation between $1$-cubes and vertices. This is straightforward. If $e=(a_{1}\ldots a_{i-1}, a_{i} \rightarrow w_{a_{i}}, a_{i+1}\ldots a_{m})$ is a $1$-cell
in $\mathcal{S}(\mathcal{P}_{n,\ell},\omega_{n,\ell})$, then its preimage in $G_{n,\ell} \backslash \Delta(n,\ell)$ is represented by a cube 
$C(v, \{ b_{i} \})$, where $v = \{ b_{1}, \ldots, b_{m} \}$ and $\mathcal{L}(b_{j}) \equiv a_{j}$ for all $j$. The initial vertex of $e$ is $a_{1}\ldots a_{m}$ and the terminal vertex
of $e$ is $a_{1}\ldots a_{i-1} w_{a_{i}} a_{i+1} \ldots a_{m}$. The labels of the initial and terminal vertices of $C(v, \{ b_{i} \})$
are likewise $a_{1} \ldots a_{m}$ and $a_{1} \ldots a_{i-1} w_{a_{i}} a_{i+1} \ldots a_{m}$ (respectively). This completes the proof.  
\end{proof}

\begin{figure}[!b] 
\begin{center} 
\begin{tikzpicture}

\filldraw[lightgray,thick] (0,0) -- (2,2) -- (0,4) -- (-2,2) --  cycle; 
\draw[black, very thick] (0,0) -- (2,2) -- (0,4) -- (-2,2) -- cycle; 
\draw[black, very thick, ->] (0,0) -- (1.1,1.1); 
\node at (1.8,.7){$a a_{i} b (a_{j}) c$}; 
\draw[black, very thick, ->] (-2,2) -- (-.9,3.1); 
\node at (-1.8,3.35) {$a w_{a_{i}} b (a_{j}) c$};
\draw[black, very thick, ->] (0,0) -- (-1.1,1.1); 
\node at (-1.8,.7){$a(a_{i})b a_{j} c$};
\draw[black, very thick, ->] (2,2)-- (.9,3.1); 
\node at (1.8,3.35){$a (a_{i}) b w_{a_{j}} c$   };

\node at (0,-.4){$a a_{i} b a_{j} c$}; 
\node at (2.9,2){$a a_{i} b w_{a_{j}} c$}; 
\node at (-2.9,2){$a w_{a_{i}} b a_{j} c$}; 
\node at (0,4.3){$a w_{a_{i}} b w_{a_{j}} c$}; 
\end{tikzpicture}
\end{center}
\caption{The attaching map of a quotient $2$-cube in the
complex $G_{n,\ell} \backslash \Delta^{(2)}(n,\ell)$.}
\label{quotient}
\end{figure}

\begin{proposition} (Isomorphism between the $2$-complexes) \label{proposition:compiso}
The map from Proposition \ref{proposition:graphiso} extends to an isomorphism between $G_{n,\ell}\backslash \Delta^{(2)}(n,\ell)$ 
and $\mathcal{S}(\mathcal{P}_{n,\ell},\omega_{n,\ell})$.
\end{proposition}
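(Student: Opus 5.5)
We extend the map $\mathcal{L}$ of Proposition \ref{proposition:graphiso} to $2$-cubes in the same way. A $2$-cube $C(v,\{b_{i},b_{j}\})$ with $i<j$ and $v=\{b_{1},\ldots,b_{m}\}$ has type sequence $a(a_{i})b(a_{j})c$, where $a\equiv a_{1}\ldots a_{i-1}$, $b\equiv a_{i+1}\ldots a_{j-1}$, $c\equiv a_{j+1}\ldots a_{m}$ and $a_{t}=\mathcal{L}(b_{t})$. We send it to the Squier square $(a,a_{i}\rightarrow w_{a_{i}},b,a_{j}\rightarrow w_{a_{j}},c)$. This square lies in $\mathcal{S}(\mathcal{P}_{n,\ell},\omega_{n,\ell})$ because its bottom vertex $\mathcal{L}(v)$ lies in $[\omega_{n,\ell}]$ by Lemma \ref{lemma:image}. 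By Proposition \ref{proposition:action}, two $2$-cubes lie in the same $G_{n,\ell}$-orbit if and only if they have the same type sequence. Hence $\mathcal{L}$ is well defined and injective on orbits of $2$-cubes.

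For surjectivity, take a square $(a,r_{1}\rightarrow r_{2},b,r'_{1}\rightarrow r'_{2},c)$ in the component of $\omega_{n,\ell}$. Every relation of $\mathcal{R}_{n,\ell}$ has a single letter on its left side. So there are two cases according to whether $r_{1}$, $r'_{1}$ are single letters (expansions) or the right sides $w$ (collapses).

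\emph{Both are expansions.} The bottom vertex $ar_{1}br'_{1}c$ lies in $[\omega_{n,\ell}]$. By Lemma \ref{lemma:surj} it equals $\mathcal{L}(v)$ for some vertex $v$. The cube $C(v,\{b_{i},b_{j}\})$, where $b_{i},b_{j}$ are the entries labelled $r_{1}$ and $r'_{1}$, then maps to the square.

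\emph{Otherwise.} Squares in $\mathcal{S}(\mathcal{P})$ are unoriented as cells, and reversing the direction of an edge in the Squier complex amounts to reading the relation backwards. Choose the corner $ar_{1}br'_{1}c$ in which both relevant subwords are single letters, i.e.\ the vertex $a X b Y c$ with $X\rightarrow w_X$ and $Y\rightarrow w_Y$. That corner is a vertex of the square, so it lies in the component, and the square is $(a,X\rightarrow w_X,b,Y\rightarrow w_Y,c)$ up to orientation. Realize this corner as $\mathcal{L}(v)$ by Lemma \ref{lemma:surj}.

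Attaching maps then match. By Definition \ref{definition:deltaB} and Definition \ref{definition:E}, the four corners of $C(v,\{b_{i},b_{j}\})$ are $v$ with $b_{i}$ and/or $b_{j}$ replaced by $E_{1}(b_{i})$ and/or $E_{1}(b_{j})$. Their labels are $aa_{i}ba_{j}c$, $aw_{a_{i}}ba_{j}c$, $aa_{i}bw_{a_{j}}c$ and $aw_{a_{i}}bw_{a_{j}}c$. This uses the observation from Lemma \ref{lemma:image} that expanding $[f,D]$ replaces its label by the labels $w_{\mathcal{L}(D)}$ of the labelled children of the reduced node of $D$. The four edges of the cube have type sequences $a(a_{i})ba_{j}c$, $aa_{i}b(a_{j})c$, $aw_{a_{i}}b(a_{j})c$ and $a(a_{i})bw_{a_{j}}c$. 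Under Proposition \ref{proposition:graphiso} these are exactly the four edges of the Squier square in Figure \ref{squier}, with matching orientations, as shown in Figure \ref{quotient}. So the boundary of the image of each quotient square is attached compatibly with the graph isomorphism.

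One point needs care: $\Delta^{(2)}$ is given the cubical cell structure, not the simplicial one, and the quotient must be an honest cell complex. Since the action is free and cube-permuting (Proposition \ref{proposition:action}), no $g$ maps a cube to itself nontrivially. Hence each orbit of $2$-cubes contributes exactly one square to the quotient, attached via the quotient of its characteristic map. Combining this with the vertex bijection (Proposition \ref{proposition:bij}), the edge bijection (Proposition \ref{proposition:graphiso}) and the compatible attaching maps gives a cellular isomorphism of $2$-complexes.

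The main obstacle is surjectivity onto squares whose edges run as collapses, together with the claim that all four edges of the cube are recorded in the Squier complex with the expansion direction. I would handle both by the uniform convention that a Squier edge is named by its single-letter end, as Proposition \ref{proposition:graphiso} implicitly does.
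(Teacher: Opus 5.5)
Your proposal is correct and follows the paper's proof. You define $\mathcal{L}$ on $2$-cubes via type sequences, use Proposition \ref{proposition:action} for well-definedness and injectivity on orbits, use Lemma \ref{lemma:surj} for surjectivity, and match corners and edges as in Figures \ref{squier} and \ref{quotient}. Your ``otherwise'' case, and the worry about collapse-directed edges, is vacuous: by definition the edges and squares of $S(\mathcal{P})$ are indexed only by pairs in $\mathcal{R}_{n,\ell}$, each of the form $X \rightarrow w_{X}$ with $X$ a single letter, so no extra convention is needed.
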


\begin{proof}
Let $C(v_{1},v_{2})$ be a $2$-cube in $\Delta(n,\ell)$. Assume that $v_{1} = \{ b_{1}, \ldots, b_{m} \}$ and $v_{2} = \{ b_{i}, b_{j} \}$, where $i < j$. For each 
$\alpha \in \{ 1, \ldots, m \}$, we suppose $\mathcal{L}(b_{\alpha}) \equiv a_{\alpha}$, where $a_{\alpha} \in \Sigma_{n,\ell}$. We can now define a
map $\mathcal{L}: C_{2}(\Delta(n,\ell)) \rightarrow
C_{2}(\mathcal{S}(\mathcal{P}_{n,\ell}, \omega_{n,\ell}))$ between the sets of $2$-cells by the rule
\[ \mathcal{L}(C(v_{1},v_{2})) = (a_{1}\ldots a_{i-1}, a_{i} \rightarrow w_{a_{i}}, a_{i+1}\ldots a_{j-1}, a_{j} \rightarrow w_{a_{j}}, a_{j+1}\ldots a_{m}). \]
This map descends to a well-defined injection
$\mathcal{L}: C_{2}(G_{n,\ell} \backslash \Delta(n,\ell))
\rightarrow \mathcal{S}(\mathcal{P}_{n,\ell},\omega_{n,\ell})$, since 
two cubes are in the same $G_{n,\ell}$-orbit exactly when 
they have the same type sequence (Proposition \ref{proposition:action}). The map $\mathcal{L}$ is also surjective on $2$-cells, for essentially the same reason that $\mathcal{L}$ was surjective on $1$-cells. Thus, $\mathcal{L}$ maps $2$-cubes
of $G_{n,\ell} \backslash \Delta(n,\ell)$ to $2$-cubes
of $\mathcal{S}(\mathcal{P}_{n,\ell}, \omega_{n,\ell})$ bijectively.

Finally, we must check the compatibility of the attaching maps. We write
$a = a_{1}\ldots a_{i-1}$, $b = a_{i+1} \ldots a_{j-1}$, $c = a_{j+1} \ldots a_{m}$. The image of $C(v_{1}, v_{2})$ in $G_{n,\ell} \backslash \Delta(n,\ell)$ takes the form depicted in Figure \ref{quotient}. Comparing this singular $2$-cube with its 
image $(a,a_{i} \rightarrow w_{a_{i}}, b, a_{j} \rightarrow w_{a_{j}}, c)$ in $\mathcal{S}(\mathcal{P}, \omega_{n,\ell})$ (see Figure \ref{squier}), we find that the squares are essentially identical up to relabelling. \end{proof}

\begin{corollary}
(Isomorphism between $G_{n,\ell}$ and $D(\mathcal{P}_{n,\ell}, \omega_{n,\ell})$)
We have an isomorphism of groups:
\[ G_{n,\ell} \cong D(\mathcal{P}_{n,\ell}, \omega_{n,\ell}). \]
\end{corollary}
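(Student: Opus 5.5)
The plan is to chain together the isomorphisms already established. First, Corollary \ref{corollary:groupiso} gives
\[ G_{n,\ell} \cong \pi_{1}\left(G_{n,\ell} \backslash \Delta^{(2)}(n,\ell)\right). \]
Its ingredients are that $\Delta(n,\ell)$ is CAT(0), hence contractible (Theorem \ref{theorem:CATSn,l}), and that the action is free and cube-permuting (Proposition \ref{proposition:action}). Second, Proposition \ref{proposition:compiso} provides a cellular isomorphism
\[ G_{n,\ell}\backslash \Delta^{(2)}(n,\ell) \to \mathcal{S}(\mathcal{P}_{n,\ell},\omega_{n,\ell}) \]
of square complexes. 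This isomorphism extends the vertex bijection of Proposition \ref{proposition:bij} and the edge bijection of Proposition \ref{proposition:graphiso}. A cellular isomorphism is a homeomorphism, so it induces an isomorphism on fundamental groups once basepoints are matched. Third, by Definition \ref{definition:Dpw}, $D(\mathcal{P}_{n,\ell},\omega_{n,\ell})$ is by definition $\pi_{1}(\mathcal{S}(\mathcal{P}_{n,\ell},\omega_{n,\ell}),\omega_{n,\ell})$.

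Basepoints must be chosen compatibly. I would take the image of the vertex $v_{\ast}$ in the quotient as the basepoint of $G_{n,\ell}\backslash\Delta^{(2)}(n,\ell)$. Since $\mathcal{L}(v_{\ast}) \equiv \omega_{n,\ell}$, the isomorphism sends this basepoint to $\omega_{n,\ell}$. In any case, both spaces are path-connected: $\Delta(n,\ell)$ is connected, and $\mathcal{S}(\mathcal{P}_{n,\ell},\omega_{n,\ell})$ is by definition a connected component. So the choice of basepoint affects the fundamental group only up to isomorphism.

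I expect the main subtlety to lie in the second step rather than in this final assembly. The quotient $G_{n,\ell}\backslash\Delta^{(2)}(n,\ell)$ must carry the cubical cell structure, not the simplicial one, so that its $2$-cells correspond to the Squier squares. Corollary \ref{corollary:groupiso} already takes this into account by insisting on the cubical structure. Moreover, because the action is free and cube-permuting, no cube is folded onto itself in the quotient, so the quotient $2$-cells really are singular squares glued as in Figure \ref{quotient}. With this understood, the composite
\[ G_{n,\ell} \cong \pi_{1}\left(G_{n,\ell}\backslash\Delta^{(2)}(n,\ell)\right) \cong \pi_{1}\left(\mathcal{S}(\mathcal{P}_{n,\ell},\omega_{n,\ell})\right) = D(\mathcal{P}_{n,\ell},\omega_{n,\ell}) \]
gives the corollary.
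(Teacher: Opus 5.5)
Your proposal is correct and is essentially the paper's own argument: the paper obtains the corollary by combining Corollary~\ref{corollary:groupiso}, Proposition~\ref{proposition:compiso}, and Definition~\ref{definition:Dpw}, exactly as you do. Your remarks on matching the basepoint $v_{\ast}$ with $\omega_{n,\ell}$ and on using the cubical (not simplicial) cell structure of the quotient make explicit what the paper leaves implicit.
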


\begin{proof}
This follows by combining Corollary \ref{corollary:groupiso}, Proposition \ref{proposition:compiso}, and Definition \ref{definition:Dpw}.
\end{proof}

\section{Finiteness properties} \label{section:finf}

The paper \cite{Far} created a framework for establishing finiteness properties of the groups $\Gamma(S)$ that act on 
the complexes $\Delta^{f}_{\mathcal{B}}$. This framework uses the well-known finiteness criterion due to Brown \cite{B}
in an essential way. We summarise as much of this material as we will need in Subsection \ref{subsection:template}, and then establish finiteness properties of the groups $G_{n,\ell}$ in Subsection \ref{subsection:finf}.

\subsection{A template for proving $F_{\infty}$}
\label{subsection:template}
\begin{convention} \label{convention:SShatGamma} 
(The inverse semigroups $S$ and $\widehat{S}$) 
Assume  that an expansion set $\mathcal{B}$ over a linearly ordered set $X$ and an inverse semigroup $S$ 
acting on $X$ are given.

We assume that $\mathcal{B}$ is defined in the ``usual" way:
i.e., members of $\mathcal{B}$ are equivalence classes 
$[f,D]$, where $f \in \widehat{S}$ and $D \in \mathcal{D}^{+}(S)$. Two such classes $[f_{1},D_{1}]$ and $[f_{2},D_{2}]$ are the same if there is $s: D_{1} \rightarrow D_{2}$ in $S$ such that $f_{2}s = f_{1}$ on $D_{1}$. See Definition \ref{definition:hatSnl}.

We construct the inverse semigroup $\widehat{S}$ to have the following properties:
\begin{enumerate}
\item $S \subseteq \widehat{S}$;
\item each $\hat{s} \in \widehat{S}$ is a finite disjoint union of members of $S$;
\item each $\hat{s} \in \widehat{S}$ is continuous and order-preserving;
\item $\widehat{S}$ is closed under (continuous) disjoint unions: i.e., if $\hat{s}_{1}, \hat{s}_{2} \in \widehat{S}$ have disjoint domains and images, then the disjoint union 
$\hat{s}_{1} \coprod \hat{s}_{2}$ is in $\widehat{S}$ provided that the function in question is continuous and has an interval as its domain;
\item $\widehat{S}$ has a non-empty subset of full-support elements; i.e., bijections from $X$ to $X$. Such elements obviously form a group, which we have denoted by 
$\Gamma(S)$ (Definition \ref{definition:Ginv}) ;
\item for each $b \in \mathcal{B}$, $supp(b)$ is the domain of some $\hat{s} \in \widehat{S}$. 
\end{enumerate}
The inverse semigroups $\widehat{S}_{n,\ell}$ (Definition \ref{definition:hatSnl}) all satisfy the above conditions on $\widehat{S}$.

We assume that the above conventions are in place throughout the rest of this subsection.
\end{convention}

\begin{definition} \label{definition:collection} (A collection of definitions)
\begin{enumerate}
\item ($lk_{\uparrow}(b,v)$) if $b \in \mathcal{B}$ and $v$ is a vertex such that $b \precneq v$, then 
$lk_{\uparrow}(b,v)$ is the simplicial complex whose vertex set consists of vertices $v'$ in $\mathcal{E}(b)$ such that $b \precneq v' \preceq v$, and whose simplices are $\preceq$-ascending 
chains in $\mathcal{E}(b)$ on those vertices;
\item ($\widehat{S}_{b}$) if $b = [f,D] \in \mathcal{B}$, then 
$\widehat{S}_{b}$ is the set of all $\hat{s} \in \widehat{S}$
such that: (i) the domain and image of $\hat{s}$ are both 
$f(D)$, and (ii) $[\hat{s}f,D] = [f,D]$;
\item (consecutively ordered) A vertex $v = \{ b_{1}, \ldots, b_{m} \}$ is \emph{consecutively ordered} if $supp(v)$ is an interval, and the supports $supp(b_{1})$, $supp(b_{2})$, $supp(b_{3})$, $\ldots$ are arranged from left to right.
(We note that $supp(v)$ need not be all of $[r_{1},r_{k}]$.)
\item (rich in contractions) We say that $\mathcal{B}$ is \emph{rich in contractions} if there is a constant $C_{1}$ such that, for any consecutively ordered vertex $v = \{ b_{1}, \ldots, b_{m} \}$ $(m \geq C_{1})$, there is always a consecutively ordered subset $v' = \{ b_{\alpha}, \ldots, b_{\beta} \}$ such that $v' \in \mathcal{E}(b') - \{ b' \}$, for some $b' \in \mathcal{B}$;
\item (bounded contractions) We say that $\mathcal{B}$  has
the \emph{bounded contractions property} if there is a constant $C_{0}$ such that, for all $b \in \mathcal{B}$ and for all $v \in \mathcal{E}(b)$, $|v| \leq C_{0}$.
\end{enumerate}
\end{definition}

\begin{theorem} \cite{Far} \label{theorem:template}  (A template for proving that $\Gamma$ has type $F_{\infty}$)
Let $\mathcal{B}$ be an expansion set over $X$; let $\widehat{S}$ be an inverse semigroup acting on $\mathcal{B}$. Let $\Gamma(S)$ be the full-support subgroup of $\widehat{S}$. If
\begin{enumerate}
\item the vertices of $\Delta^{f}_{\mathcal{B}}$ are a directed set with respect to $\preceq$;
\item for each $b \in \mathcal{B}$ and $v \in \mathcal{V}_{\mathcal{B}}$ such that $\{ b \} \precneq v$, 
$lk_{\uparrow}(b,v)$ is contractible; 
\item each stabilizer group $\widehat{S}_{b}$ ($b \in \mathcal{B}$) has type $F_{\infty}$, and acts cocompactly on $\mathcal{E}(b)$;
\item the action of $\Gamma$ on $\mathcal{B}$ has finitely many orbits;
\item $\mathcal{B}$ is rich in contractions and has the bounded contractions property,
\end{enumerate}
then $\Gamma$ has type $F_{\infty}$.
\end{theorem}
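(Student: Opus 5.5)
The plan is to apply Brown's finiteness criterion \cite{B} to the action of $\Gamma$ on $\Delta^{f}_{\mathcal{B}}$, with the height $h(v)=|v|$ as a Morse-type function. First, hypothesis (1) and Theorem \ref{theorem:CAT(0)} (or, more elementarily, a cone-off argument in the directed poset $(\mathcal{V}_{\mathcal{B}},\preceq)$) show that $\Delta^{f}_{\mathcal{B}}$ is contractible. For each $m$, let $\Delta_{m}$ be the full subcomplex spanned by the vertices $v$ with $|v| \leq m$. These subcomplexes form a $\Gamma$-invariant filtration exhausting $\Delta^{f}_{\mathcal{B}}$.

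Next come the two finiteness inputs to Brown's criterion. For cocompactness of each $\Delta_{m}$, I will use hypothesis (4): there are only finitely many $\Gamma$-orbits of elements $b$. A vertex is a left-to-right ordered tuple $(b_{1},\ldots,b_{j})$ whose supports tile $X$. Given two such tuples with pairwise equivalent entries, I will use the closure of $\widehat{S}$ under continuous disjoint unions (Convention \ref{convention:SShatGamma}(4)) to glue the individual matching elements of $\widehat{S}$ into a single full-support element of $\Gamma$. This is the same argument as in Proposition \ref{proposition:action}, and it shows that there are finitely many orbits of vertices, and hence of cubes, of height $\leq m$. For cell stabilizers, the stabilizer of a vertex $v$ preserves the order of its entries, so it is the direct product $\prod_{b\in v}\widehat{S}_{b}$ (each factor restricted to $supp(b)$ and extended by the identity elsewhere). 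By hypothesis (3), this product has type $F_{\infty}$. Stabilizers of higher cubes fix their minimal vertex and are finite-index (indeed equal) subgroups of such products, so they are also $F_{\infty}$.

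It remains to show that the connectivity of the pair $(\Delta_{m},\Delta_{m-1})$ tends to infinity with $m$. By discrete Morse theory, it suffices to show that the descending link of a vertex $v$ with $|v|=m$ is $(\lfloor m/C\rfloor-2)$-connected for some constant $C$. I plan to first use hypothesis (2) to reduce to a combinatorial descending link. The full link of $v$ in the simplicial structure splits into chains going up, chains going down, and the join of these. The ascending pieces are built out of the complexes $lk_{\uparrow}(b,v')$, which are contractible. A standard argument then shows that passing from $\Delta_{m-1}$ to $\Delta_{m}$ changes homotopy type only through the descending part $\mathrm{lk}_{\downarrow}(v)$. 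That descending part is the simplicial complex whose simplices are collections of pairwise disjoint consecutive subsets $v'\subseteq v$, each of which equals $E_{1}(b')$ for some $b'$.

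The main obstacle will be this last connectivity estimate. I would handle it as in Brown's treatment of $F$, via the nerve/``matching complex'' technique. The bounded contractions property gives $|v'|\leq C_{0}$ for every contractible block, so each block is a short interval of consecutive entries. Richness in contractions gives a contractible block inside every window of $C_{1}$ consecutive entries. I will then prove, by induction on $m$, that the complex of disjoint collections of such short intervals in a line of length $m$ is $(\lfloor m/(C_{0}+C_{1})\rfloor-1)$-connected. To do this, I will fix the leftmost window, cover the complex by the stars of the blocks meeting that window, and observe that intersections of these stars are again complexes of the same kind on a line shortened by at most $C_{0}+C_{1}$. The induction hypothesis together with a nerve lemma then finishes the estimate, and Brown's criterion yields that $\Gamma$ has type $F_{\infty}$.
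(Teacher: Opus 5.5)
The paper gives no proof of this theorem; it is quoted from \cite{Far}. Your architecture is the right one: Brown's criterion for the height filtration, vertex stabilizers $\prod_{b\in v}\widehat{S}_b$, descending links as complexes of disjoint consecutive blocks, and a nerve-lemma induction driven by bounded and rich contractions. The problem is that your argument silently assumes $\mathcal{E}(b)=\{\{b\},E_1(b)\}$, i.e.\ that $\mathcal{B}$ is a \emph{simple} expansion set. The statement, however, concerns general expansion sets, where $\mathcal{E}(b)$ may be large or infinite. In the simple case, hypothesis (2) and the cocompactness clause of (3) hold automatically (compare the proof of Theorem \ref{theorem:finf}), which is why you never genuinely use them. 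In the general case several of your steps break:
\begin{itemize}
\item Theorem \ref{theorem:CAT(0)} is only stated for simple expansion sets.
\item Finitely many orbits of vertices of height $\le m$ no longer gives finitely many orbits of simplices, because a vertex can have infinitely many elementary expansions. Controlling this is exactly what ``$\widehat{S}_b$ acts cocompactly on $\mathcal{E}(b)$'' is for.
\item A simplex $w_0\prec w_1$ with $w_1=\bigcup_{b\in w_0}e(b)$, $e(b)\in\mathcal{E}(b)$, has stabilizer $\prod_b \mathrm{Stab}_{\widehat{S}_b}(e(b))$. So your ``indeed equal'' fails, and the $F_\infty$ property of simplex stabilizers needs its own argument.
\item The descending link now involves chains inside the sets $\mathcal{E}(b')$, not just collections of disjoint blocks.
\end{itemize}

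Your fallback for contractibility does not work either. Directedness makes the order complex of $(\mathcal{V}_{\mathcal{B}},\preceq)$ contractible, but $\Delta^{f}_{\mathcal{B}}$ is a proper subcomplex: only chains lying in a single cube are simplices. For example, if $w'$ comes from $w$ by expanding $b$, and $w''$ from $w'$ by expanding a member of $E_1(b)$, then $w\prec w'\prec w''$ is a chain but not a simplex. So a common upper bound of a finite subcomplex is not a cone point. Passing from the order complex to $\Delta^{f}_{\mathcal{B}}$ needs a Stein-type comparison. For $x\prec y$ with $y$ not elementary over $x=\{b_1,\ldots,b_\alpha\}$, the elements of $(x,y]$ that are elementary over $x$ form, up to homotopy, the join of the complexes $lk_{\uparrow}(b_i,y_i)$, where $y_i$ is the part of $y$ supported in $supp(b_i)$. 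This is where a hypothesis like (2) does its work.

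By contrast, where you do invoke (2), it does nothing. Heights strictly increase along edges, so $\Delta_m$ is just $\Delta_{m-1}$ with cones on descending links attached, and ascending links never enter the Morse argument.

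You should either state explicitly that you are proving the simple-expansion-set case, or supply the general-case steps above. The simple case is all the paper actually uses, and there your proof is essentially complete once you cite Theorem \ref{theorem:CAT(0)}. One small point remains in that case: a block may equal $E_1(b')$ for several $b'$, which makes the descending link a ``coloured'' block complex, though your nerve argument handles it verbatim.
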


\subsection{Finiteness properties of $G_{n,\ell}$}
\label{subsection:finf}

\begin{theorem} \label{theorem:finf}(Finiteness properties of $G_{n,\ell}$)
If $\ell$ is a sequence of rational numbers, then $G_{n,\ell}$ has type $F_{\infty}$. If at least one entry in $\ell$ is irrational, then $G_{n,\ell}$ is not
finitely generated.
\end{theorem}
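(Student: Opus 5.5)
For the rational case I will verify the five hypotheses of Theorem \ref{theorem:template} for $\mathcal{B}=\mathcal{B}_{n,\ell}$, $\widehat{S}=\widehat{S}_{n,\ell}$ and $\Gamma=\Gamma(S_{n,\ell})=G_{n,\ell}$ (Theorem \ref{theorem:big}).

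Hypothesis (1) is exactly what the proof of Theorem \ref{theorem:CATSn,l} establishes. For (2), each $\mathcal{E}(b)$ has only two members, $E_{0}(b)$ and $E_{1}(b)$, with $E_{0}(b)\preceq E_{1}(b)$. So for $\{b\}\precneq v$ the complex $lk_{\uparrow}(b,v)$ is either empty, a single point, or an edge. I will show it always contains $E_{1}(b)$, which makes it a point or an edge and hence contractible. To see this, use the geometric description of $\preceq$: any ascending path out of $\{b\}$ must begin with the only available expansion of $b$. This forces $E_{1}(b)\preceq v$, and this property of simple expansion sets is presumably the content of \cite{Far2}.

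For (3), suppose $\hat s\in\widehat{S}_{b}$ with $b=[f,D]$. Then $\hat s f=s f$ for some $s\in S_{n,\ell}$ mapping $D$ to $D$, so $s=\mathrm{id}_{D}$. The freeness argument in Proposition \ref{proposition:action} then gives $\hat s=\mathrm{id}$. Hence $\widehat{S}_{b}$ is trivial, which is certainly $F_{\infty}$, and it acts cocompactly on the finite set $\mathcal{E}(b)$.

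For (4), orbits of $G_{n,\ell}$ on $\mathcal{B}$ correspond to domain types, i.e.\ to labels, by the argument of Proposition \ref{proposition:action}. When every $r_{i}$ is rational, every automaton $A_{n}(r_{i})$ is finite, so $\Sigma_{n,\ell}$ is finite and there are finitely many orbits.

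Hypothesis (5) is the main obstacle. Bounded contractions is easy: $|E_{1}(b)|\le n$. Richness in contractions requires a constant $C_{1}$ such that every consecutively ordered vertex $v$ of length at least $C_{1}$ contains a consecutive block of the form $E_{1}(b')$. I will first translate this, via $\mathcal{L}$ and the equivalence $[f,D]=[fs,s^{-1}D]$ used in Lemma \ref{lemma:surj}, into a statement about words: every consecutively ordered vertex has label word a subword of a word in some class $[\omega]$, and a block with label $w_{c}$, where $c\to w_{c}$ is a relation, can be contracted as in case (ii) of Lemma \ref{lemma:surj}.

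So it suffices to show that every sufficiently long word arising as a label of consecutive supports contains some $w_{c}$. The letters of $\Sigma_{n,\ell}$ other than $x$ number at most $|\Sigma_{n,\ell}|$, and each such letter can occur only once per fixed point $r_{i}$. Consequently a long consecutive word contains a run of roughly $n$ consecutive $x$'s. The key point is to check that these $x$'s have supports lying in a common reduced node; here I plan a pigeonhole argument on the supports combined with the relation $x\to x^{n}$. I expect this check to need care, and if it fails I will argue with the concrete interval structure, using the supports $f(D)$, instead.

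For the irrational case, suppose $r_{i}$ is irrational. Then $A_{n}(r_{i})$ is an infinite ray, and its states give infinitely many distinct labels. Suppose $G_{n,\ell}$ were generated by a finite set. Each generator $h$ is a finite union of elements $\sigma_{\omega_{1}}^{\omega_{2}}\in S_{n,\ell}$. Near $r_{i}$, Lemma \ref{lemma:stab} forces such a piece to satisfy $\omega_{1}\equiv\omega_{2}$, since the ray has no loops, so each generator is the identity on some neighbourhood $I_{\omega}$ of $r_{i}$. Taking the smallest of these neighbourhoods over the finite generating set, the whole generated group fixes a fixed $I_{\omega}$ pointwise.

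It remains to exhibit an element of $G_{n,\ell}$ that is nontrivial on every neighbourhood of $r_{i}$, which contradicts this. I will build one by choosing a deeper node on the ray, labelled $x$, lying to one side of $r_{i}$ and inside $I_{\omega}$, and applying a nontrivial element of the copy of $F_{n}$ supported on the corresponding interval; this element lies in $G_{n,\ell}$ by Theorem \ref{theorem:big}.
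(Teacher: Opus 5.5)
Your route is the paper's. In the rational case you verify hypotheses (1)--(5) of Theorem~\ref{theorem:template}. In the irrational case you show every element of $G_{n,\ell}$ is the identity near $r_i$ while some elements move points arbitrarily close to $r_i$. Items (1)--(4) and the irrational case follow the paper's arguments.

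\textbf{The gap: richness in contractions in (5).} You leave this as a plan, and the ``key point'' you isolate is a red herring. You do not need the $n$ consecutive $x$-labelled members to have supports inside a common reduced node, and in general they do not. Supports are images $f(D)$ under arbitrary $f\in\widehat{S}_{n,\ell}$ and need not even be standard $n$-adic intervals. Already for $F=G_{2,(0,1)}$, the adjacent members $[id,I_{011}]$ and $[id,I_{100}]$ of the vertex $\{[id,I_\omega]\mid \omega=00,010,011,100,101,11\}$ form $E_1([\sigma_{010}^{011}\sqcup\sigma_{011}^{100},I_{01}])$.

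Your own earlier reduction already settles this. Let $b_j=[f_j,D_j]$, $j=1,\dots,n$, be consecutive $x$-labelled members. Take any $x$-labelled node $E$ with children $E_j$, choose $s_j\colon E_j\to D_j$ in $S_{n,\ell}$, and let $f=\coprod_j f_js_j$. This $f$ lies in $\widehat{S}_{n,\ell}$ because the supports are consecutive. Then $E_1([f,E])=\{[f_js_j,E_j]\}=\{b_j\}$, exactly as in case (ii) of Lemma~\ref{lemma:surj}.

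What remains is the precise count you left as ``roughly $n$''. A member $[f,D]$ labelled by a state of $A_n(r_m)$ has $r_m$ in its support, since $r_m\in D$ and $f$ fixes $r_m$ by Lemma~\ref{lemma:stab}. (If $m=k$ and $r_k$ is $n$-adic, $r_k$ is instead the right endpoint.) So disjointness of supports allows at most one letter from each automaton, hence at most $k$ non-$x$ letters. Moreover, letters from $A_n(r_1)$ and $A_n(r_k)$ can only occur at the two ends. Hence there are at most $k-1$ maximal runs of $x$'s. If none has length $n$, the word has length at most $(k-1)(n-1)+k=kn-n+1$. So $C_1=kn-n+2$ works; this is the paper's argument.

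\textbf{Smaller points.}

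In (2), $E_0(b)=\{b\}$ is excluded by the strict inequality, so $lk_{\uparrow}(b,v)$ is exactly the point $E_1(b)$, never an edge.

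In the irrational case, the $x$-labelled node must branch off the ray, since nodes on the ray carry labels of $A_n(r_i)$. It should lie on the side of $r_i$ facing the interior of $[r_1,r_k]$.

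In (4), Proposition~\ref{proposition:action} classifies orbits of vertices, not of single members of $\mathcal{B}_{n,\ell}$, and a member's label does not determine its orbit.
\begin{itemize}
\item For $k\ge 3$, $x$-labelled members with supports in different intervals $[r_m,r_{m+1}]$ lie in different orbits.
\item Even for $n=3$, $\ell=(0,1)$, the members $[id,I_1]$ and $[id,I_{20}]$ lie in different orbits. Every $g\in G_{3,(0,1)}$ satisfies $g(t)-t\in 2\mathbb{Z}[1/3]$ for $t\in\mathbb{Z}[1/3]$, so $g(1/3)\neq 2/3$.
\end{itemize}
The paper phrases (4) the same way. Finiteness of the number of orbits, which is all (4) needs, still holds. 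A member's orbit is determined by its label together with the $G_{n,\ell}$-orbits of the endpoints of its support. Two $n$-adic points in the same gap $(r_m,r_{m+1})$ with the same residue modulo $(n-1)\mathbb{Z}[1/n]$ lie in the same orbit. So there are only finitely many possible endpoint orbits.
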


\begin{proof}
Let $\ell = (r_{1}, \ldots, r_{k})$. If $r_{i}$ is irrational for some $i \in \{ 1, \ldots, k \}$, then every $h \in G_{n,\ell}$ fixes an open neighborhood of $r_{i}$ by Lemma \ref{lemma:basic}(3-4). Thus, if $G_{n,\ell}$ were finitely generated, then there would exist an open neighborhood
of $r_{i}$ on which every $h \in G_{n,\ell}$ is the identity. It is, however, straightforward to argue that there are homeomorphisms $g \in G_{n,\ell}$ that move points arbitrarily close to $r_{i}$. This is a contradiction, and proves that $G_{n,\ell}$ is infinitely generated. 

Now assume that each $r_{i}$ is rational. We consider points (1)-(5) from 
Theorem \ref{theorem:template}. Point (1) is covered in the proof of Theorem
\ref{theorem:CATSn,l}. If $\{ b \} \precneq v$, then there is a sequence 
\[ \{ b \} = v_{0}, v_{1}, \ldots, v_{m} =v \]
such that, for all $i$, $v_{i+1} \in C(v_{i}, v'_{i})$. We note that this condition completely determines $v_{1}$; indeed, we must always have $v_{1} = E_{1}(b)$, for any sequence as above. It now follows from Definition \ref{definition:collection}(1) that $lk(b,v)$ is a point, and therefore contractible, proving (2).   The stabilizer group $\widehat{S}_{b}$ is, by definition, the collection of all transformations $\widehat{s}$ in $\widehat{S}_{n,\ell}$
such that: (i) the domain and range of $\widehat{s}$ is $supp(b)$, and 
(ii) $\widehat{s} \cdot b = b$. Let $[f,D] \in \mathcal{B}$, and let $\widehat{s} \in \widehat{S}_{b}$. It follows that $[f,D] = [\widehat{s}f, D]$, so there is some $h: D \rightarrow D$ $(h \in S_{n,\ell})$ such that $\widehat{s}fh_{\mid D} = f$. We note that $h$ is necessarily the identity (Lemma \ref{lemma:basic}(4)), so we have the equality 
$\widehat{s}f = f$. If $y \in f(D)$, then there is $x \in D$ such that
$f(x) = y$. Evaluating $\widehat{s}$ at $y$, we find $\widehat{s}(y) = y$, so
$\widehat{s}$ is the identity on all of $f(D)$. Thus, $\widehat{S}_{b}$ is the trivial group. Each $\mathcal{E}(b)$ consists of two points, so (3) is satisfied. Point (4) is satisfied since the orbits
of $\mathcal{B}_{n,\ell}$ under the action of $G_{n,\ell}$ are in one-to-one correspondence with the letters of $\Sigma_{n,\ell}$, which are finite in number under the current hypotheses.

Finally, we consider (5). For a given $b \in \mathcal{B}_{n,\ell}$, 
\[ \mathcal{E}(b) = \{ E_{0}(b), E_{1}(b) \}, \]
where $E_{0}(b)$ contains one member, and $E_{1}(b)$ contains no more than $n$ members. Thus, the bounded contractions property is satisfied with $C_{0} =n$. 
We claim that $\mathcal{B}$ is rich in contractions with constant $C_{1} = kn-n+2$. Indeed, suppose that the vertex $v = \{ b_{1}, \ldots, b_{\beta} \}$ is consecutively ordered and $\beta \geq kn-n+2$. It suffices to argue that
\[ \mathcal{L}(v) = \mathcal{L}(b_{1}) \ldots \mathcal{L}(b_{\beta}) \]
contains a subword of the form $x^{n}$, for then we are guaranteed the required contraction by case (ii) in the proof of Lemma \ref{lemma:surj}. We consider the maximal subwords of $\mathcal{L}(v)$ having the form $x^{\alpha}$, for some $\alpha > 0$. There are at most $k-1$ such words, since any two maximal subwords of the 
form $x^{\alpha'}$ and $x^{\alpha''}$ must be separated by a label from 
one of the automata $A_{n}(r_{m})$, with $m \in \{ 2, \ldots, k-1 \}$, and the label $\mathcal{L}(v)$ cannot contain more than one label from a given automaton $A_{n}(r_{m})$. Thus, if there were no subword of the form $x^{n}$, we would have at most 
$(k-1)(n-1)$ occurrences of ``$x$'' in the word $\mathcal{L}(v)$. However, the remaining letters of $\mathcal{L}(v)$ can be no more than $k$ in number, 
since each letter is chosen from the labels of a given automaton
$A_{n}(r_{m})$, and at most one label can be selected from each automaton.
It follows that $\mathcal{L}(v)$ contains no more than $(k-1)(n-1)+k = kn-n+1$ symbols. However, $\mathcal{L}(v) = \beta > kn-n+1$ by hypothesis. This is a contradiction. This proves (5).

Since (1)-(5) are satisfied, it now follows from Theorem \ref{theorem:template} that $G_{n,\ell}$ has type $F_{\infty}$ when all of the $r_{i}$ are rational.

\end{proof}

\section{Alternate approaches and further directions}
\label{section:alternate}

In this final section, we consider different approaches to the proof of our main theorem and some additional directions for research.

\subsection{Non-complete semigroup presentations}

Most of the semigroup presentations $\mathcal{P}_{n,\ell}$ are not complete. For instance, consider the semigroup presentation $\mathcal{P}_{3,\ell}$ from Section \ref{section:example}. First, reverse each arrow in $\mathcal{R}$ (so that $x \rightarrow x^{3}$ becomes $x^{3} \rightarrow x$) in order to create a terminating presentation. It is straightforward to check that the word $AExG$ is equivalent modulo $\mathcal{P}_{3,\ell}$ to the reduced words $AEG$ and $BDG$. It follows that $\mathcal{P}_{3,\ell}$ cannot be complete. There are many more examples of this kind.

Guba and Sapir found methods to compute group presentations \cite{GS} and homology groups \cite{GS2}
of diagram groups $D(\mathcal{P},\omega)$. Their methods, however, apply most directly to complete semigroup presentations. The following problem may therefore  be considered open.
\begin{problem}\label{problem:myp}
Compute the homology groups $H_{\ast}(G_{n, \ell}; \mathbb{Z})$ for general $n$ and $\ell$. Compute group presentations for the groups $G_{n,\ell}$.
\end{problem}
Guba and Sapir \cite{GS2} use Brown's idea of a collapsing scheme \cite{BrownCollapse} to facilitate the computation of the homology of $D(\mathcal{P},\omega)$. The hypothesis of completeness ensures that the homology groups can be computed from a chain complex in which all boundary maps are $0$. The same methods appear to be applicable in the current context, but one will probably need to deal with non-zero boundary maps in most cases.

Note that \cite{GolanSapir2} introduces methods to compute presentations for $D(\mathcal{P},\omega)$ under the weaker hypothesis that $\mathcal{P}$ is a semi-complete presentation. The latter may offer a useful procedure for computing presentations of the groups $G_{n,\ell}$. 

\subsection{Closed subgroups of $F$ and $G_{n,\ell}$}

In \cite{GolanSapir2}, Golan-Polak
and Sapir define closed subgroups of Thompson's group $F$. These are subgroups of $F$ that have a natural description as diagram groups (see Lemma 2.6 and Definition 2.7 from \cite{GolanSapir2}). After Lemma 6.2, they note that if $S$ is
a subset of $[0,1]$, then the stabilizer group $F_{S}$ is a closed subgroup (i.e., a diagram group). Thus, the 
statement that $G_{n,\ell}$ is a diagram group
is originally due to Golan-Polak and Sapir.

\subsection{A different proof of the $F_{\infty}$ property}

After an earlier version of this paper was posted to the arXiv, I learned from Matt Brin the following fact and an alternate proof of the  $F_{\infty}$ portion of Theorem \ref{theorem:bigone}.

\begin{proposition} \label{proposition:Brin}
If $a<b$ are rational numbers, then 
\[ F \cong G_{2,(a,b)}. \]
\end{proposition}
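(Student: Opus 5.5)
Since $F = G_{2,(0,1)}$, the plan is to build a piecewise linear homeomorphism $\phi\colon[0,1]\to[a,b]$ with dyadic breakpoints and slopes powers of $2$ in the interior, which conjugates $F$ onto $G_{2,(a,b)}$. The model case is $a=0$, $b=1$, where $\phi=\mathrm{id}$. In general I will use infinitely many breakpoints, accumulating only at the two endpoints, and exploit the periodicity of the binary expansions of $a$ and $b$.

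\textbf{Step 1 (partition near $a$).} Write $a=.\alpha\overline{\beta}$ in binary, following the conventions of Remark \ref{remark:nary}. The interval $(a,c]$, for some dyadic $c$ slightly larger than $a$, decomposes canonically into standard dyadic intervals as follows. Follow the expansion of $a$: whenever the next digit of $a$ is $0$, the standard dyadic interval obtained by reading $1$ instead lies to the right of $a$ and is disjoint from the path. This gives a sequence of standard dyadic intervals $J_1,J_2,\ldots$ decreasing toward $a$. By eventual periodicity, the addresses of these intervals are eventually of the form $\gamma\beta^{m}\delta$. This is exactly the information recorded by the automaton $A_2(a)$, together with the relation $(X, Yx^{n-i-1})$ structure of $\mathcal{P}_{2,\ell}$. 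Do the same at $b$, using the digits $1$ of $b$ and the intervals to the left. Between the two sides, a finite partition of a middle dyadic interval is used.

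\textbf{Step 2 (the model on $[0,1]$).} On the $F$ side, decompose $(0,1/2]$ into $[2^{-m-1},2^{-m})$ and $[1/2,1)$ into $[1-2^{-m},1-2^{-m-1})$. Define $\phi$ to map these intervals, in order and affinely, onto the intervals $J_m$ near $a$ and the analogous intervals near $b$, after first matching up finitely many middle pieces. Each piece of $\phi$ is a standard dyadic transformation, so it is affine with slope a power of $2$ and sends dyadics to dyadics.

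\textbf{Step 3 (conjugation works).} Let $g\in F$. Near $0$, $g$ is linear with slope $2^{p}$, so it shifts the intervals $[2^{-m-1},2^{-m})$ by $p$ steps. The conjugate $\phi g\phi^{-1}$ then maps $J_m$ affinely to $J_{m+p}$. The key point is that periodicity makes the maps $J_m\to J_{m+p}$ agree, for large $m$, with a single standard dyadic transformation $\sigma$ that fixes $a$. This follows from Lemma \ref{lemma:stab}, since the relevant addresses have the same accepting state in $A_2(a)$. Hence $\phi g\phi^{-1}$ has only finitely many breakpoints, all dyadic, with slopes powers of $2$. It therefore lies in $G_{2,(a,b)}$.

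Conversely, each $h\in G_{2,(a,b)}$ is near $a$ given by a single standard dyadic transformation fixing $a$, by Theorem \ref{theorem:big}. Such a transformation permutes the $J_m$ by a shift, provided the intervals are indexed by one period of the loop at a time. If the loop has length $q$, I will group the $J_m$ accordingly, and compose with the Step 2 matching so that one period corresponds to one step $[2^{-m-1},2^{-m})$. Pulling back then gives an element of $F$.

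\textbf{Main obstacle.} The hardest step is making the indexing consistent. A shift along the loop of $A_2(a)$ by one period must correspond to a shift by a whole number of steps near $0$ in $F$, and each period may contribute several intervals $J$. My first attempt is to coarsen: within one period, the intervals $J$ contributed by that period, together with the gaps between them, fill a region which is itself a finite union of dyadic intervals. I would send the whole region affinely-piecewise onto one interval $[2^{-m-1},2^{-m})$ via a fixed finite dyadic subdivision, using Lemma \ref{lemma:subdivision} and the fact that $F$ acts transitively on finite dyadic subdivisions with matching counts. Because the same template is used for every period, the conjugates remain eventually periodic, and the verification in Step 3 goes through.
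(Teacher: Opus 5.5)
Your approach differs from the paper's and does work, but only in the coarsened form of your last paragraph. That version must replace Steps 2 and 3, not supplement them: as first written they fail whenever one period of $a$ contains more than one $0$.

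For $a=1/7=.\overline{001}$, the $J_m$ are $I_{1},I_{01},I_{0011},I_{00101},I_{0010011},\ldots$, with lengths $2^{-1},2^{-2},2^{-4},2^{-5},2^{-7},\ldots$. The affine maps $J_m\to J_{m+1}$ therefore alternate between slopes $1/2$ and $1/4$, and the conjugate of $x\mapsto x/2$ has infinitely many breakpoints accumulating at $a$. The claim in Step 3, that these maps eventually agree with a single $\sigma$, holds only when each period contributes exactly one $J$. (Also, there are no gaps between consecutive $J$'s; they tile.)

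The precise form of your fix is as follows. Let $\sigma=\sigma_{\alpha}^{\alpha\beta}$, which is affine, fixes $a$, and has slope $2^{-q}$ with $q=|\beta|$. Let $c_m$ be the right endpoint of $I_{\alpha\beta^{m}}$ and $P_m=[c_{m+1},c_m)=\sigma^{m}(P_0)$; the $P_m$ tile $(a,c_0)$. Choose one PL homeomorphism $\psi_0\colon[1/4,1/2)\to P_0$ with dyadic breakpoints and slopes in $\langle 2\rangle$. Then define $\phi(2^{-m}t)=\sigma^{m}\psi_0(t)$ for $t\in[1/4,1/2)$, so that $\phi(x/2)=\sigma(\phi(x))$ near $0$. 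Your ``same template'' has to mean exactly this equivariance.

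With it, the germ $x\mapsto 2^{p}x$ at $0$ corresponds to $\sigma^{-p}$ at $a$. Conversely, every germ at $a$ of an element of $G_{2,(a,b)}$ is a power of $\sigma$. By Theorem \ref{theorem:big} and Lemma \ref{lemma:stab} it is some $\sigma_{\omega_1}^{\omega_2}$ with $\omega_1,\omega_2$ having the same accepting state, so $|\omega_1|-|\omega_2|\in q\mathbb{Z}$. The same construction works at $b$, with a finitary dyadic PL map in the middle.

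The paper, following Brin, Bleak and Lodha, never builds a conjugator. It argues as follows:
\begin{enumerate}
\item It moves $a,b$ within their $F$-orbits to points $p',q'$ near $0$ and $1$.
\item It perturbs the generators $\ell,r$ near those points so that they fix $p',q'$ with generating germs.
\item It uses the two-relator presentation of $F$, together with the fact that proper quotients of $F$ are abelian, to get $\langle\hat\ell,\hat r\rangle\cong F$.
\item It obtains surjectivity onto $F_{[p',q']}$ by killing germs and conjugating into $F_{[1/4,3/4]}=\langle\hat u,\hat v\rangle$.
\end{enumerate}

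Your route avoids presentations, itinerary computations and the quotient theorem, and it produces an explicit conjugating homeomorphism. It also isolates exactly where $n=2$ is used, namely in the existence of $\psi_0$. For $n>2$, the number of standard $n$-adic pieces modulo $n-1$ is invariant under finitary $n$-adic PL maps, so a fundamental domain at $a$ need not match one at $0$; for example, with $n=3$ and $a=1/2$ the counts have different parities. The paper's route, in exchange, stays entirely within finitary PL maps and needs no fundamental-domain bookkeeping at the endpoints.
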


This fact seems to be well-known to some people, but it is not clear whether a proof has been published. The following proof is adapted from notes due to  Brin, which were based on proofs supplied independently to him by Collin Bleak and Yash Lodha. 

\begin{proof}
We need to set up our argument with help from a few carefully chosen members of $F$. We let
$\ell = x_{0}x_{1}^{-1}$ and $r = x_{0}x_{1}x^{-1}_{0}$, where $x_{0}$ and $x_{1}$ are the standard generators of $F$ as described in \cite{CFP}. (Our convention, as always, is to write functions on the left.) We let $u = r \ell^{-1} r^{-1} \ell$ and $v = \ell^{-1} r \ell^{-1} r^{-1} \ell^{2}$. A straightforward check shows that:
\begin{enumerate}
\item $supp(\ell) = (0,3/4)$ and $supp(r) = (1/4,1)$;
\item $F = \langle \ell, r \rangle$;
\item $F_{[1/4,3/4]} = \langle u, v \rangle$.
\end{enumerate}
Here $supp(f)$, the \emph{support} of $f \in F$, is the set of all $x \in [0,1]$ such that $
f(x) \neq x$. The group $F_{[a,b]}$ has been denoted by $G_{2,(a,b)}$ in the main body of the paper. 

We now record, for future reference, the itineraries of the points $1/4$ and $3/4$ under the words $u$ and $v$. We list each intermediate destination of these points as the generators $\ell$ and $r$ are applied:
\begin{align*}
\\
u: \quad \quad & \frac{1}{4} \xrightarrow{\ell} \frac{1}{8} \xrightarrow{r^{-1}} \frac{1}{8} \xrightarrow{\ell^{-1}} \frac{1}{4} \xrightarrow{r} \frac{1}{4}; \quad \quad
\frac{3}{4} \xrightarrow{\ell} \frac{3}{4} \xrightarrow{r^{-1}} \frac{7}{8} \xrightarrow{\ell^{-1}} \frac{7}{8} \xrightarrow{r} \frac{3}{4}. \\ \\
v: \quad \quad & \frac{1}{4} \xrightarrow{\ell} \frac{1}{8} \xrightarrow{\ell} \frac{1}{16}
\xrightarrow{r^{-1}} \frac{1}{16} \xrightarrow{\ell^{-1}} \frac{1}{8} \xrightarrow{r} \frac{1}{8} \xrightarrow{\ell^{-1}} \frac{1}{4};\\  \\
&\frac{3}{4} \xrightarrow{\ell} \frac{3}{4} \xrightarrow{\ell} \frac{3}{4}
\xrightarrow{r^{-1}} \frac{7}{8} \xrightarrow{\ell^{-1}} \frac{7}{8} \xrightarrow{r} \frac{3}{4} \xrightarrow{\ell^{-1}} \frac{3}{4}. \\
\end{align*}
The strategy of our proof will involve altering the generators $\ell$ and $r$ into
a different pair $\hat{\ell}$ and $\hat{r}$ with desirable properties.
Suppose that $\hat{\ell}, \hat{r} \in F$ are such that
\begin{enumerate}
\item $supp(\hat{\ell}) \subseteq supp(\ell)$ and $supp(\hat{r}) \subseteq supp(r)$, and 
\item the restrictions of $\hat{\ell}$ and $\hat{r}$ to $[1/16, 7/8]$ agree with the restrictions of $\ell$ and $r$ to $[1/16,7/8]$. 
\end{enumerate}
We claim that $\langle \hat{\ell}, \hat{r} \rangle$ is isomorphic to $F$, by an isomorphism $\phi$ that sends $\ell$ to $\hat{\ell}$ and $r$ to $\hat{r}$, and that
$\langle \hat{u}, \hat{v} \rangle = F_{[1/4,3/4]}$, where $\hat{u} = \phi(u)$ and $\hat{v} = \phi(v)$.  

We first prove that $\phi$ is an isomorphism. The group $F$ has the following presentation relative to the generators $\ell$ and $r$:
\[ F \cong \langle \ell, r \mid [\ell, r^{\ell r \ell}], [\ell, r^{\ell r \ell r \ell}] \rangle. \]
(Here $a^{b} := b^{-1}ab$.) The latter presentation is the result 
of replacing each occurrence of $x_{0}$ with $r\ell$ and each occurrence of $x_{1}$ with $\ell^{-1} r \ell$ in a well-known presentation of $F$ \cite{CFP}. Thus, to show that $\phi$ is a homomorphism, it suffices to show that $\hat{\ell}$ and $\hat{r}$ satisfy the analogous relations. For this, it is enough to argue that
\begin{itemize}
\item $supp(\hat{\ell}) \cap \hat{\ell}^{-1} \hat{r}^{-1} \hat{\ell}^{-1} \cdot supp(\hat{r}) = \emptyset$, and
\item $supp(\hat{\ell}) \cap \hat{\ell}^{-1} \hat{r}^{-1} \hat{\ell}^{-1} \hat{r}^{-1} \hat{\ell}^{-1} \cdot supp(\hat{r}) = \emptyset$.
\end{itemize}
Indeed, it suffices to prove the above with $supp(\ell) = (0,3/4)$ and $supp(r)$ taking the place of $supp(\hat{\ell})$ and $supp(\hat{r})$, by assumption (1). We are thus led to consider the itinerary of $1/4$ under the words $\ell^{-1} r^{-1} \ell^{-1} r^{-1} \ell^{-1}$ and $\ell^{-1} r^{-1} \ell^{-1}$:
\[ \frac{1}{4} \xrightarrow{\ell^{-1}} \frac{1}{2} \xrightarrow{r^{-1}} \frac{3}{4} \xrightarrow{\ell^{-1}} \frac{3}{4} \xrightarrow{r^{-1}} \frac{7}{8} 
\xrightarrow{\ell^{-1}} \frac{7}{8}. \]
Since this itinerary lies entirely inside $[1/16,7/8]$, it follows that the itinerary 
with respect to the generators $\hat{\ell}$ and $\hat{r}$ is identical, by assumption (2). It now follows easily that 
\[ \hat{\ell}^{-1} \hat{r}^{-1} \hat{\ell}^{-1} \cdot supp(r) = (3/4,1), 
\quad \text{ and } \quad \hat{\ell}^{-1} \hat{r}^{-1} \hat{\ell}^{-1} \hat{r}^{-1} \hat{\ell}^{-1} \cdot supp(r) = (7/8,1), \]
from which the desired conclusion follows readily. Thus, $\phi$ is a homomorphism. Clearly it is surjective. If $\phi$ were not injective, then 
$\langle \hat{\ell}, \hat{r} \rangle$ would be a proper quotient of $F$, and therefore abelian \cite{CFP}. This is not the case, since (for instance)
$\hat{r} \hat{\ell} (1/2) = 1/4$, but $\hat{\ell} \hat{r} (1/2) = 3/16$. Thus,
$\phi$ is an isomorphism.

We now prove the equality $\langle \hat{u}, \hat{v} \rangle = F_{[1/4,3/4]}$.
We first note that the restrictions of $\hat{u}$ and $\hat{v}$ to $[1/4,3/4]$ must agree with those of $u$ and $v$, since the itinerary of any $x \in [1/4,3/4]$ relative to $u$ or $v$ always stays inside of the the interval $[1/16,7/8]$, and thus the corresponding itineraries relative to $\hat{u}$ or $\hat{v}$ are identical, by assumption (2). Now we must argue that $\hat{u}$ and $\hat{v}$ 
act as the identity outside of $[1/4,3/4]$. Here, we argue only a representative case. Suppose that $x < 1/4$ and consider the action of $\hat{u}$. We know that $\hat{\ell}(x) < \hat{\ell}(1/4) = 1/8$. We therefore have
$\hat{r}^{-1} \hat{\ell}(x) = \hat{\ell}(x)$, since $\hat{\ell}(x)$ lies outside of the support of $\hat{r}^{-1}$. Thus
\begin{align*}
\hat{\ell}^{-1} \hat{r}^{-1} \hat{\ell}(x) = x \quad &\Rightarrow \quad 
\hat{r}\hat{\ell}^{-1} \hat{r}^{-1} \hat{\ell}(x) = \hat{r}(x) \\
&\Rightarrow \quad \hat{r}\hat{\ell}^{-1} \hat{r}^{-1} \hat{\ell}(x) = x,
\end{align*}
where the equality $\hat{r}(x) = x$ is due to the fact that $x \not \in supp(\hat{r})$. Thus, $\hat{u}(x) = x$. This completes the proof of the claim.

We are now ready to prove Proposition \ref{proposition:Brin}.
Assume that $p<q$ are rational numbers in $(0,1)$. We choose
$p',q' \in \mathbb{Q}$ and $\hat{\ell}, \hat{r} \in F$ such that:
\begin{itemize}
\item $p'$ and $q'$ are in the same $F$-orbit as $p$ and $q$, respectively;
\item $\hat{\ell}$ agrees with $\ell$ except in a small neighborhood $U$ of 
$p'$, where $U \subseteq [0,1/16)$;
\item $\hat{r}$ agrees with $r$ except in a small neighborhood $V$ of $q'$,
where $V \subseteq (7/8,1]$;
\item $\hat{\ell}(p') = p'$ and $\hat{\ell}'(p')= 2^{\alpha}$, where $\alpha$ is the negative of the length of the loop in the automaton $A_{2}(p')$;
\item $\hat{r}(q') = q'$, $\hat{r} = 2^{\beta}$, where $\beta$ is the  length of the loop in the automaton $A_{2}(q')$;
\item $(p',3/4) \subseteq supp(\hat{\ell})$ and $(1/4,q') \subseteq supp(\hat{r})$.
\end{itemize}
Under these assumptions, $\langle \hat{\ell}, \hat{r} \rangle \cong F$ and 
$F_{[1/4,3/4]} \subseteq \langle \hat{\ell}, \hat{r} \rangle$ by the previous claim. Restricting the members of $\langle \hat{\ell}, \hat{r} \rangle$ to $[p',q']$, we get an injective homomorphism $\psi: \langle \hat{\ell}, \hat{r} \rangle \rightarrow F_{[p',q']}$. We will identify $\hat{\ell}$ and $\hat{r}$ with their images in $F_{[p',q']}$. 

We claim that $\psi$ is surjective. Let $\omega \in F_{[p',q']}$. By the choices of the derivatives $\hat{\ell}$ and $\hat{r}$ at $p'$ and $q'$, we can find suitable exponents $a$ and $b$ such that $\omega_{1} := \hat{\ell}^{a} \hat{r}^{b} \omega$
is the identity in small neighborhoods of $p'$ and $q'$. We can then conjugate
 $\omega_{1}$ so that its support lies inside $(1/4,3/4)$, by a word $\tau \in \langle \hat{\ell}, \hat{r} \rangle$. Thus, $\omega_{2} := \tau \omega_{1} \tau^{-1} \in F_{[1/4,3/4]} = \langle \hat{u}, \hat{v} \rangle$. It follows that $\omega \in \langle \hat{\ell}, \hat{r} \rangle$, proving surjectivity. Thus, $F \cong F_{[p',q']} \cong F_{[p,q]}$. 
 
 The case in which $p=0$ or $q=1$ is easier to handle. One simply lets $\hat{\ell} = \ell$ and/or $\hat{r} = r$. 
\end{proof}

Using Proposition \ref{proposition:Brin}, we can prove the $F_{\infty}$ property for the groups $G_{2,\ell}$ as follows.
Assume, for the sake of simplicity, that $\ell = (a,b,c)$,
where $a<b<c$ are all rational. Assume also that $b$ is not a dyadic rational number, since this is the more difficult case. Let $F_{[a,b]}$ and $F_{[b,c]}$ denote the groups $G_{2,(a,b)}$
and $G_{2,(b,c)}$ (respectively). There is a natural injection $\phi: G_{2,\ell} \rightarrow F_{[a,b]} \times F_{[b,c]}$
defined by the rule $h \mapsto (h_{\mid [a,b]}, h_{\mid [b,c]})$. The map $\phi$ is not surjective, since the left and right derivatives of $h$ at $b$ are necessarily the same, but 
it is always possible to find $(h_{1},h_{2}) \in F_{[a,b]} \times F_{[b,c]}$ such that $h'_{1}(b) \neq h'_{2}(b)$. (The derivatives in question are from the left and right, respectively.) 

Now define a homomorphism $\theta: F_{[a,b]} \times F_{[b,c]} \rightarrow \mathbb{Z}$ by the rule
$\theta(h_{1},h_{2}) = \log_{2}(h'_{1}(b)) -\log_{2}(h'_{2}(b))$. This is a homomorphism essentially by results of \cite{CFP}, and the kernel consists of the pairs 
$(h_{1},h_{2})$ where the left and right derivatives at $b$ agree. This is precisely $\phi(G_{2,\ell})$.    

Now we appeal to Theorem 3.2 from \cite{BGK}, which computes all of the Sigma invariants $\Sigma^{m}(F^{n})$, for all $m$ and $n$. These invariants completely encode the finiteness properties $F_{m}$ of $Ker(\theta)$, for all homomorphisms $\theta: F^{n} \rightarrow \mathbb{Z}^{d}$. In particular, the finiteness properties of $Ker (\theta) = G_{2,\ell}$ can be entirely determined from \cite{BGK} using the isomorphism
$F_{[a,b]} \times F_{[b,c]} \cong F^{2}$, and the $F_{\infty}$ property for $G_{2,\ell}$ can thus be proved. This argument can be extended to arbitrary $\ell$ without any essential change.

\bibliographystyle{plain}
\bibliography{biblio}

\end{document}